\renewcommand{\qed}{\hfill $\square$}
\newcommand{\milp}{\mathcal{P}}	
\newcommand{\milpo}{\mathcal{P}^{\text{o}}}								
\newcommand{\rlp}{\mathcal{R}_{\text{LP}}}
\newcommand{\rlpo}{\mathcal{R}_{\text{LP}}^{\text{o}}}
\newcommand{\rlpob}{\mathcal{R}_{\text{LP}}^{\text{ob}}}
\newcommand{\rlpb}{\mathcal{R}^{\text{b}}_{\text{LP}}}					
\newcommand{\rsdp}{\mathcal{R}_{\text{SDP}}}
\newcommand{\rsdpo}{\mathcal{R}^{\text{o}}_{\text{SDP}}}
\newcommand{\rsdpob}{\mathcal{R}^{\text{ob}}_{\text{SDP}}}			
\begin{document}

\RUNTITLE{Size Matters: Cardinality-Constrained Clustering and Outlier Detection via Conic Optimization}
\RUNAUTHOR{Rujeerapaiboon, Schindler, Kuhn, Wiesemann}
\TITLE{Size Matters: Cardinality-Constrained Clustering and Outlier Detection via Conic Optimization}

\ARTICLEAUTHORS{%
\AUTHOR{Napat Rujeerapaiboon}
\AFF{\textit{Department of Industrial Systems Engineering and Management, National University of Singapore, Singapore}, \EMAIL{isenapa@nus.edu.sg}} 
\AUTHOR{Kilian Schindler, Daniel Kuhn}
\AFF{\textit{Risk Analytics and Optimization Chair, \'Ecole Polytechnique F\'ed\'erale de Lausanne, Switzerland}, \\ \EMAIL{kilian.schindler@epfl.ch}, \EMAIL{daniel.kuhn@epfl.ch}} 
\AUTHOR{Wolfram Wiesemann}
\AFF{\textit{Imperial College Business School, Imperial College London, United Kingdom,} \EMAIL{ww@imperial.ac.uk}}
}

\ABSTRACT{%
{\color{black}Plain vanilla $K$-means clustering has proven to be successful in practice, yet it suffers from outlier sensitivity and may produce highly unbalanced clusters.} To mitigate both shortcomings, we formulate a joint outlier detection and clustering problem, which assigns a prescribed number of datapoints to an auxiliary outlier cluster and performs cardinality-constrained \mbox{$K$-means} clustering on the residual dataset{\color{black}, treating the cluster cardinalities as a given input}. We cast this problem as a mixed-integer linear program (MILP) that admits tractable semidefinite and linear programming relaxations. We propose deterministic rounding schemes that transform the relaxed solutions to feasible solutions for the MILP. We also prove that these solutions are optimal in the MILP if a cluster separation condition holds. 
}

\KEYWORDS{Semidefinite programming, $K$-means clustering, outlier detection, optimality guarantee}

\maketitle

\section{Introduction}

Clustering aims to partition a set of datapoints into a set of clusters so that datapoints in the same cluster are more similar to another than to those in other clusters. Among the myriad of clustering approaches from the literature, $K$-means clustering stands out for its long history dating back to 1957 as well as its impressive performance in various application domains, ranging from market segmentation and recommender systems to image segmentation and feature learning \cite[]{Jain2010651}.

{\color{black}This paper studies the \emph{cardinality-constrained $K$-means clustering problem}, which we define as the task of partitioning $N$ datapoints $\bm{\xi}_1, \ldots, \bm{\xi}_N \in \mathbb{R}^d$ into $K$ clusters $I_1, \ldots, I_K$ of prescribed sizes $n_1, \ldots, n_K$, with $n_1 + \ldots + n_K = N$, so as to minimize the sum of squared intra-cluster distances. We can formalize the cardinality-constrained $K$-means clustering problem as follows,
\begin{equation}\label{opt:kmean_size}
\begin{array}{l@{\quad}l}
\displaystyle \text{minimize} & \textstyle \sum_{k=1}^K \sum_{i \in I_k} \lVert \bm{\xi}_i - \frac{1}{n_k} ( \sum_{j \in I_k} \bm{\xi}_j ) \rVert^2 \\[3mm]
\displaystyle \text{subject to} & \displaystyle (I_1, \ldots, I_K) \in \mathfrak{P} (n_1, \ldots, n_K),
\end{array}
\end{equation}
where
\begin{equation*}
\mathfrak{P} (n_1, \ldots, n_K) = \Big\{ (I_1, \ldots, I_K) \, : \, \vert I_k \vert = n_k \;\; \forall k, \;\; \cup_{k=1}^K I_k = \{ 1, \ldots, N \}, \;\; I_k \cap I_\ell = \emptyset \;\; \forall k \neq \ell \Big\}
\end{equation*}
denotes the ordered partitions of the set $\{ 1, \ldots, N \}$ into $K$ sets of sizes $n_1, \ldots, n_K$, respectively.}

Our motivation for studying problem~\eqref{opt:kmean_size} is threefold. Firstly, it has been shown by \cite{Bennett2000} and \cite{NIPS2005_2902} that the algorithms commonly employed for the \emph{un}constrained $K$-means clustering problem frequently produce suboptimal solutions where some of the clusters contain very few or even no datapoints. In this context, cardinality constraints can act as a regularizer that avoids local minima of poor quality. Secondly, many application domains require the clusters $I_1, \ldots, I_K$ to be of comparable size. This is the case, among others, in distributed clustering (where different computer clusters should contain similar numbers of network nodes), market segmentation (where each customer segment will subsequently be addressed by a marketing campaign) and document clustering (where topic hierarchies should display a balanced view of the available documents), see \cite{Banerjee2006} and \cite{NIPS2013_5096}. Finally, and perhaps most importantly, $K$-means clustering is highly sensitive to outliers. To illustrate this, consider the dataset in Figure~\ref{fig:outliers}, which accommodates three clusters as well as three individual outliers. The $K$-means clustering problem erroneously merges two of the three clusters in order to assign the three outliers to the third cluster (top left graph), whereas a clustering that disregards the three outliers would recover the true clusters and result in a significantly lower objective value (bottom left graph). The cardinality-constrained $K$-means clustering problem, where the cardinality of each cluster is set to be one third of all datapoints, shows a similar behavior on this dataset (graphs on the right). We will argue below, however, that the cardinality-constrained $K$-means clustering problem~\eqref{opt:kmean_size} offers an intuitive and mathematically rigorous framework to robustify $K$-means clustering against outliers. {\color{black} A comprehensive and principled treatment of outlier detection methods can be found in the book of \cite{Aggarwal2013}.}

\begin{figure}[h]
 	\centering
 	\hspace{-7mm}
	\includegraphics[width=0.40\paperwidth]{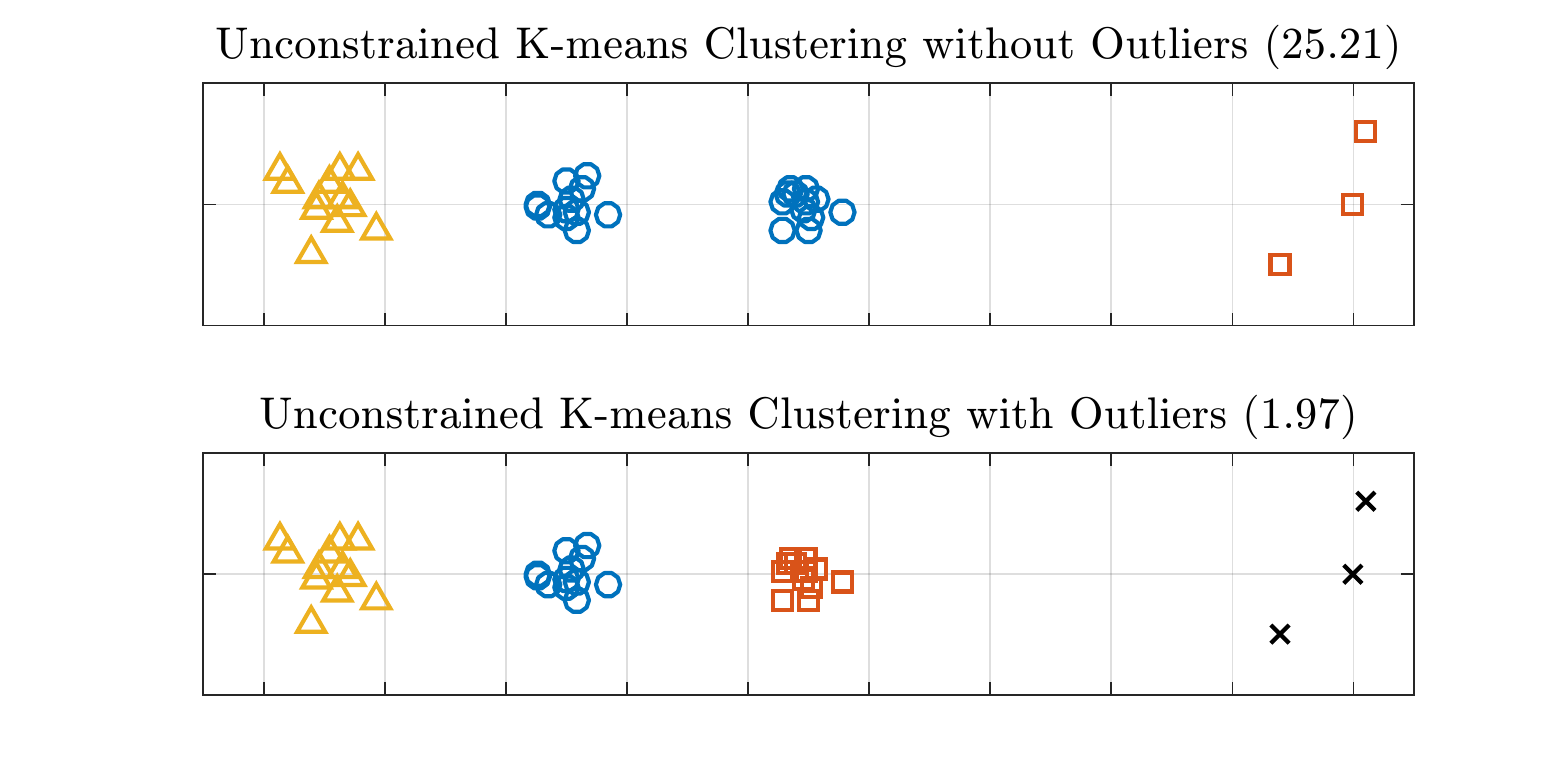} \hspace{-7mm}
	\includegraphics[width=0.40\paperwidth]{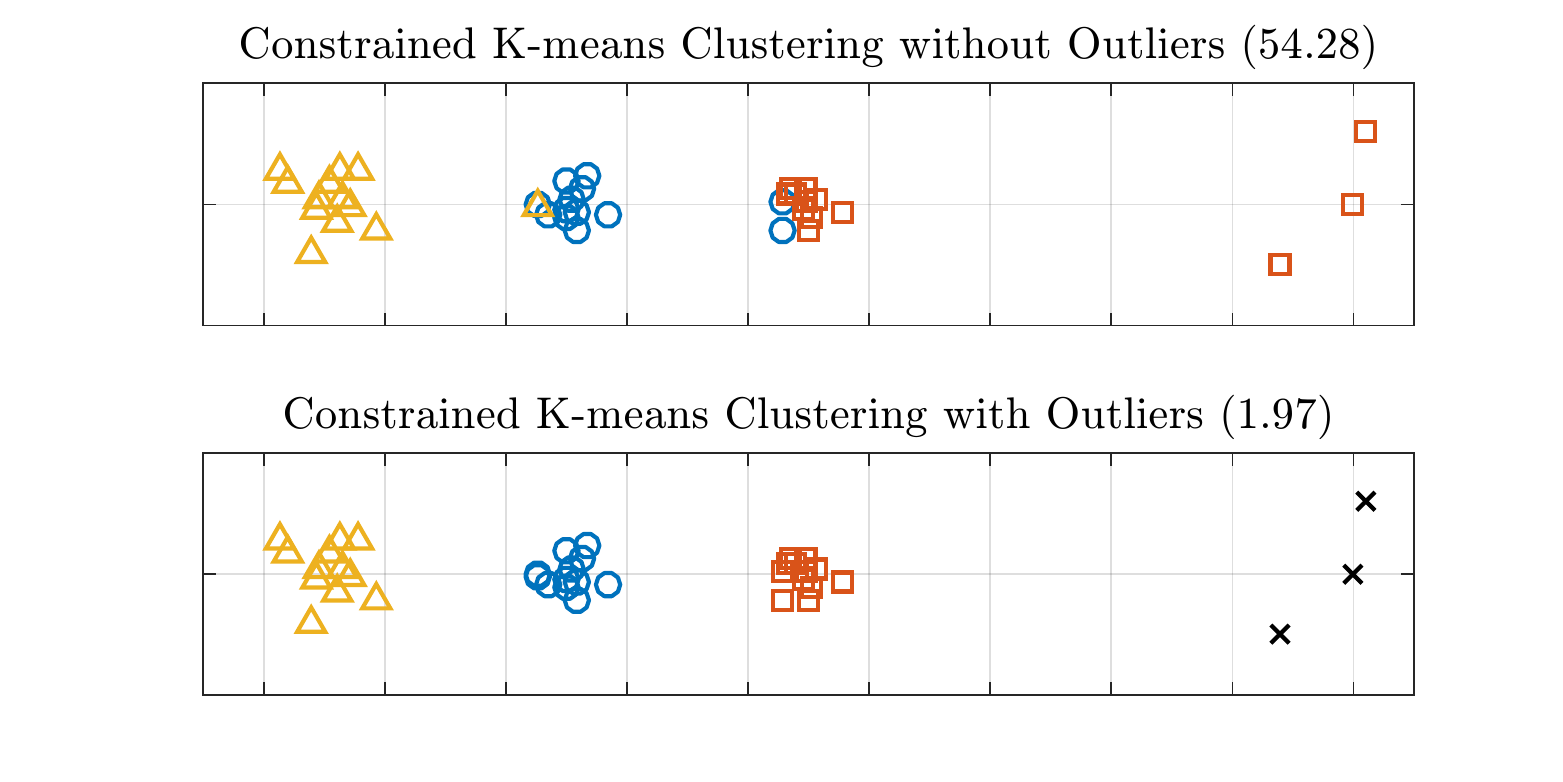} 
	\caption{Sensitivity of the (un)constrained $K$-means clustering problem to outliers. Indicated in parentheses next to the panel titles are the respectively achieved sums of squared intra-cluster distances.}
	\label{fig:outliers}
\end{figure}

To our best knowledge, to date only two solution approaches have been proposed for problem~\eqref{opt:kmean_size}. \cite{Bennett2000} combine a classical local search heuristic for the unconstrained $K$-means clustering problem due to \cite{Lloyd1982} with the repeated solution of linear assignment problems to solve a variant of problem~\eqref{opt:kmean_size} that imposes lower bounds on the cluster sizes $n_1, \ldots, n_K$. \cite{Banerjee2006} solve the balanced version of problem~\eqref{opt:kmean_size}, where $n_1 = \cdots = n_K$, by sampling a subset of the datapoints, performing a clustering on this subset, and subsequently populating the resulting clusters with the remaining datapoints while adhering to the cardinality constraints. {\color{black}Balanced clustering is also considered by \cite{Malinen2014} and \cite{Costa2017}. \cite{Malinen2014} proceed similarly to \cite{Bennett2000} but take explicit advantage of the Hungarian algorithm to speed up the cluster assignment step within the local search heuristic. \cite{Costa2017} propose a variable neighbourhood search heuristic that starts from a random partition of the datapoints into balanced clusters and subsequently searches for better solutions in the neighbourhood obtained by an increasing number of datapoint swaps between two clusters. Although all of these heuristics tend to quickly produce solutions of high quality, they are not known to be polynomial-time algorithms, they do not provide bounds on the suboptimality of the identified solutions, and their performance may be sensitive to the choice of the initial solution.} Moreover, neither of these local search schemes accommodates for outliers.

In recent years, several conic optimization schemes have been proposed to alleviate the shortcomings of these local search methods for the unconstrained $K$-means clustering problem \cite[]{Peng2007, Awasthi2015}. \cite{Peng2007} develop two semidefinite programming relaxations of the unconstrained $K$-means clustering problem. Their weaker relaxation admits optimal solutions that can be characterized by means of an eigenvalue decomposition. They further use this eigenvalue decomposition to set up a modified $K$-means clustering problem where the dimensionality of the datapoints is reduced to $K-1$ (provided that their original dimensionality was larger than that). To obtain an upper bound, they solve this $K$-means clustering problem of reduced dimensionality, which can be done either exactly by enumerating Voronoi partitions, as described in \cite{Inaba1994}, or by approximation methods such as those in \cite{hasegawa1993cient}. Using either approach, the runtime grows polynomially in the number of datapoints~$N$ but not in the number of desired clusters $K$. Hence, this method is primarily suitable for small~$K$. Similar conic approximation schemes have been developed by \cite{Elhamifar2012} and \cite{Nellore2015165} in the context of unconstrained exemplar-based clustering.

\cite{Awasthi2015} and \cite{Iguchi2015} develop probabilistic recovery guarantees for the stronger semidefinite relaxation of \cite{Peng2007} when the data is generated by a stochastic ball model (\emph{i.e.,} datapoints are drawn randomly from rotation symmetric distributions supported on unit balls). More specifically, they use primal-dual arguments to establish conditions on the cluster separation under which the semidefinite relaxation of \cite{Peng2007} recovers the underlying clusters with high probability as the number of datapoints $N$ increases. The condition of \cite{Awasthi2015} requires less separation in low dimensions, while the condition of \cite{Iguchi2015} is less restrictive in high dimensions. In addition, \cite{Awasthi2015} consider a linear programming relaxation of the unconstrained $K$-means clustering problem, and they derive similar recovery guarantees for this relaxation as well.

Two more papers study the recovery guarantees of conic relaxations under a stochastic block model (\emph{i.e.,} the dataset is characterized by a similarity matrix where the expected pairwise similarities of points in the same cluster are higher than those of points in different clusters). \cite{Ames2014} considers the densest $K$-disjoint-clique problem whose aim is to split a given complete graph into $K$ subgraphs such as to maximize the sum of the average similarities of the resulting subgraphs. $K$-means clustering can be considered as a specific instance of this broader class of problems. By means of primal-dual arguments, the author derives conditions on the means in the stochastic block model such that his semidefinite relaxation recovers the underlying clusters with high probability as the cardinality of the smallest cluster increases. \cite{VinayakHassib2016} develop a semidefinite relaxation and regularize it with the trace of the cluster assignment matrix. Using primal-dual arguments they show that, for specific ranges of the regularization parameter, their regularized semidefinite relaxation recovers the true clusters with high probability as the cardinality of the smallest cluster increases. The probabilistic recovery guarantees of \cite{Ames2014} and \cite{VinayakHassib2016} can also be extended to datasets containing outliers.

\begin{table}[H]
\scriptsize
\centering
\begin{tabular}{r||c|c|c|c|c}
    & Awasthi et al. & Iguchi et al. & Ames & Vinayak and Hassibi & This Paper\\ \hline 
	data generating model & stochastic ball & stochastic ball & stochastic block & stochastic block & none/arbitrary \\
	type of relaxation & SDP + LP & SDP & SDP & SDP & SDP + LP \\
	type of guarantee & stochastic & stochastic & stochastic & stochastic & deterministic \\
	guarantee depends on $N$ & yes & yes & yes & yes & no \\
	guarantee depends on $d$ & yes & yes & no & no & no \\
	requires balancedness & yes & yes & no & no & yes \\
	proof technique & primal-dual & primal-dual & primal-dual & primal-dual & valid cuts \\
	access to cardinalities & no & no & no & no & yes \\
	outlier detection & no & no & yes & yes & yes \\  \hline
\end{tabular}
~\\~\\
\caption{Comparison of Recovery Guarantees for $K$-means Clustering Relaxations.}
\label{tab:comparison}
\end{table}

In this paper, we propose the first conic optimization scheme for the cardinality-constrained \mbox{$K$-means} clustering problem~\eqref{opt:kmean_size}. {\color{black} Our solution approach relies on an exact reformulation of problem~\eqref{opt:kmean_size} as an intractable mixed-integer linear program (MILP) to which we add a set of valid cuts before relaxing the resulting model to a tractable semidefinite program (SDP) or linear program (LP). The set of valid cuts is essential in strengthening these relaxations.} {\color{black} Both relaxations provide lower bounds on the optimal value of problem~\eqref{opt:kmean_size}, and they both recover the optimal value of~\eqref{opt:kmean_size} whenever a cluster separation condition is met. The latter requires all cluster diameters to be smaller than the distance between any two distinct clusters and, in case of outlier presence, also smaller than the distance between any outlier and any other point. The same condition (in the absence of outliers) was used in \cite{Elhamifar2012} and \cite{Awasthi2015}.} {\color{black} Our relaxations also give rise to deterministic rounding schemes which produce feasible solutions that are provably optimal in~\eqref{opt:kmean_size} whenever the cluster separation condition holds. Table~\ref{tab:comparison} compares our recovery guarantees to the results available in the literature. {\color{black}We emphasize that our guarantees are deterministic, that they apply to arbitrary data generating models, that they are dimension-independent, and that they hold for both our SDP and LP relaxations.} Finally, our algorithms extend to instances of~\eqref{opt:kmean_size} that are contaminated by outliers and whose cluster cardinalities $n_1, \ldots, n_K$ are not known precisely.} We summarize the paper's contributions as follows.
\begin{enumerate}
\item {\color{black}We derive a novel MILP reformulation of problem~\eqref{opt:kmean_size} that only involves $NK$ binary variables, as opposed to the standard MILP reformulation that contains $N^2$ binary variables, and whose LP relaxation is at least as tight as the LP relaxation of the standard reformulation.}
\item {\color{black} We develop lower bounds which exploit the cardinality information in problem~\eqref{opt:kmean_size}. Our bounds are tight whenever a cluster separation condition is met. Unlike similar results for other classes of clustering problems, our separation condition is deterministic, model-free and dimension-independent. Furthermore, our proof technique does not rely on the primal-dual argument of SDPs and LPs.}
\item We propose deterministic rounding schemes {\color{black} that transform the relaxed solutions to feasible solutions} for problem~\eqref{opt:kmean_size}. The solutions are optimal in~\eqref{opt:kmean_size} if the separation condition holds. To our best knowledge, we propose the first tractable solution scheme for problem~\eqref{opt:kmean_size} with optimality guarantees.
\item We illustrate that our lower bounds and rounding schemes extend to instances of problem~\eqref{opt:kmean_size} that are contaminated by outliers and whose cluster cardinalities are not known precisely.
\end{enumerate}

The remainder of the paper is structured as follows. Section 2 analyzes the cardinality-constrained $K$-means clustering problem~\eqref{opt:kmean_size} and derives the MILP reformulation underlying our solution scheme. Sections 3 and 4 propose and analyze our conic rounding approaches for problem~\eqref{opt:kmean_size} in the absence and presence of outliers, respectively. {\color{black}Section 5 presents numerical experiments, and Section~6 gives concluding remarks. Finally, a detailed description of the heuristic proposed by \cite{Bennett2000} for cardinality-constrained $K$-means clustering is provided in the appendix.}

\paragraph{Notation:} We denote by $\bm{1}$ the vector of all ones and by $\Vert \cdot \Vert$ the Euclidean norm. For symmetric square matrices $\mathbf{A},\mathbf{B}\in \mathbb S^N$, the relation $\mathbf{A} \succeq \mathbf{B}$ means that $\mathbf{A} - \mathbf{B}$ is positive semidefinite, while $\mathbf{A} \geq \mathbf{B}$ means that $\mathbf{A} - \mathbf{B}$ is elementwise non-negative. The notation $\left< \mathbf{A}, \mathbf{B} \right>=\text{Tr}(\mathbf{A}\mathbf{B})$ represents the trace inner product of $\mathbf{A}$ and $\mathbf{B}$. {\color{black}Furthermore, we use $\text{diag}(\mathbf{A})$ to denote a vector in $\mathbb{R}^N$ whose entries coincide with those of $\mathbf{A}$'s main diagonal. {\color{black}Finally, for a set of $N$ datapoints $\bm{\xi}_1,\ldots,\bm{\xi}_N$, we use $\mathbf{D} \in \mathbb{S}^N$ to denote the matrix of squared pairwise distances $d_{ij} = \Vert \bm\xi_i - \bm\xi_j \Vert^2$.}
\section{Problem Formulation and Analysis}
We first prove that the clustering problem~\eqref{opt:kmean_size} is an instance of a \emph{quadratic assignment problem} and transform~\eqref{opt:kmean_size} to an MILP with $NK$ binary variables. Then, we discuss the complexity of~\eqref{opt:kmean_size} and show that an optimal clustering always corresponds to some Voronoi partition of $\mathbb R^d$. 

{\color{black} Our first result relies on the following auxiliary lemma, which we state without proof.

\begin{lemma}
\label{lem:distance}
For any vectors $\bm\xi_1, \hdots, \bm\xi_n \in \mathbb{R}^d$, we have
\begin{equation*}
	\sum_{i=1}^n \Vert \bm\xi_i - {\textstyle \frac{1}{n} (\sum_{j=1}^n \bm\xi_j)} \Vert^2 = \frac{1}{2n} \sum_{i,j=1}^n \Vert \bm\xi_i - \bm\xi_j \Vert^2.
\end{equation*}
\end{lemma}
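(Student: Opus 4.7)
The plan is to verify the identity by expanding both sides into sums of squared norms and inner products, then showing they reduce to the same compact expression. Let $\bar{\bm\xi} = \frac{1}{n}\sum_{j=1}^n \bm\xi_j$ denote the centroid.

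First, I would expand the left-hand side via $\Vert \bm\xi_i - \bar{\bm\xi}\Vert^2 = \Vert \bm\xi_i\Vert^2 - 2\langle \bm\xi_i,\bar{\bm\xi}\rangle + \Vert \bar{\bm\xi}\Vert^2$. Summing over $i$ and using $\sum_i \bm\xi_i = n\bar{\bm\xi}$, the cross term becomes $-2n\Vert\bar{\bm\xi}\Vert^2$, which combines with the constant term $n\Vert\bar{\bm\xi}\Vert^2$ to yield
\begin{equation*}
\sum_{i=1}^n \Vert \bm\xi_i - \bar{\bm\xi}\Vert^2 \;=\; \sum_{i=1}^n \Vert \bm\xi_i\Vert^2 - n\Vert\bar{\bm\xi}\Vert^2.
\end{equation*}

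Next, I would expand the right-hand side analogously: $\Vert \bm\xi_i-\bm\xi_j\Vert^2 = \Vert \bm\xi_i\Vert^2 - 2\langle \bm\xi_i,\bm\xi_j\rangle + \Vert\bm\xi_j\Vert^2$. Summing the first and third terms over $i,j$ gives $2n\sum_i \Vert\bm\xi_i\Vert^2$, while the cross term collapses via $\sum_{i,j}\langle \bm\xi_i,\bm\xi_j\rangle = \Vert \sum_i \bm\xi_i\Vert^2 = n^2 \Vert\bar{\bm\xi}\Vert^2$. Dividing by $2n$ yields the same expression $\sum_i \Vert\bm\xi_i\Vert^2 - n\Vert\bar{\bm\xi}\Vert^2$, establishing equality.

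There is no real obstacle here; the proof is a routine algebraic manipulation and the only step that requires a moment of attention is recognizing that the double sum of inner products factors as the squared norm of the total sum. The identity is a classical characterization of the within-cluster sum of squares as half the average of all pairwise squared distances, and the statement is offered without proof in the paper precisely because the derivation is mechanical.
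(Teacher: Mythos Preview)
Your proof is correct. The paper does not actually supply an argument for this lemma; it simply cites \citet[p.~1060]{Zha2001}. Your direct expansion of both sides to the common form $\sum_i \Vert\bm\xi_i\Vert^2 - n\Vert\bar{\bm\xi}\Vert^2$ is the standard way to verify the identity and is entirely adequate.
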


\begin{proof}{Proof}
See \citet[p.~1060]{Zha2001}. \qed
\end{proof}

}

{\color{black} Using Lemma~\ref{lem:distance}, \cite{Costa2017} notice that the $K$-means objective can be stated as a sum of quadratic terms. In the following proposition, we elaborate on this insight and prove that problem~\eqref{opt:kmean_size} is a specific instance of a quadratic assignment problem.}

\begin{proposition}[Quadratic Assignment Reformulation]
\label{prop:kmean_qap}
The clustering problem~\eqref{opt:kmean_size} can be cast as the quadratic assignment problem
\begin{equation}
\label{opt:kmean_qap}
\begin{aligned}
	& \textstyle \underset{\sigma \in \mathfrak S^N}{\text{\em{minimize}}} \quad \frac{1}{2} \left< \mathbf{W}, \mathbf{P}_{\sigma}^{} \mathbf{D} \mathbf{P}_{\sigma}^\top \right>,
\end{aligned}
\end{equation} 
where $\mathbf{W} \in \mathbb{S}^N$ is a block diagonal matrix with blocks $\frac{1}{n_k} \bm{11}^\top \in \mathbb{S}^{n_k}$, $k=1,\ldots,K$, $\mathfrak S^N$ is the set of permutations of $\{1,\ldots,N\}$, and $\mathbf{P}_{\sigma}$ is defined through $(\mathbf{P}_{\sigma})_{ij} = 1$ if $\sigma(i) = j$; ${\color{black}(\mathbf{P}_{\sigma})_{ij} = 0}$ otherwise.
\end{proposition}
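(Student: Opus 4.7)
The plan is to rewrite the objective of problem~\eqref{opt:kmean_size} in a pairwise-distance form via Lemma~\ref{lem:distance}, identify the pairwise terms with the entries of $\mathbf{P}_\sigma \mathbf{D} \mathbf{P}_\sigma^\top$, and then observe that the block-diagonal structure of $\mathbf{W}$ exactly encodes the sum of intra-cluster pairwise distances, weighted by the reciprocal cluster sizes.

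First I would apply Lemma~\ref{lem:distance} cluster-by-cluster to express the objective of~\eqref{opt:kmean_size} as
\begin{equation*}
    \sum_{k=1}^K \sum_{i\in I_k}\Bigl\lVert \bm\xi_i - \tfrac{1}{n_k}\sum_{j\in I_k}\bm\xi_j\Bigr\rVert^2
    \;=\; \sum_{k=1}^K \frac{1}{2n_k}\sum_{i,j\in I_k} d_{ij}.
\end{equation*}
Next, I would set up a bijection between ordered partitions and cosets of permutations. Let $B_k=\{\sum_{\ell<k} n_\ell+1,\ldots,\sum_{\ell\le k} n_\ell\}$ be the $k$-th block of ``slots'', so that $|B_k|=n_k$ and the blocks of $\mathbf{W}$ are supported exactly on the index sets $B_k\times B_k$. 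To any permutation $\sigma\in\mathfrak S^N$ associate the ordered partition $(I_1(\sigma),\ldots,I_K(\sigma))$ with $I_k(\sigma):=\sigma(B_k)$; this map is onto $\mathfrak{P}(n_1,\ldots,n_K)$.

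Then I would compute, by direct indexing, the transformed matrix
\begin{equation*}
    (\mathbf{P}_\sigma \mathbf{D}\mathbf{P}_\sigma^\top)_{k\ell}
    \;=\; \sum_{i,j} (\mathbf{P}_\sigma)_{ki}\, d_{ij}\,(\mathbf{P}_\sigma)_{\ell j}
    \;=\; d_{\sigma(k),\sigma(\ell)}.
\end{equation*}
Since $\mathbf{W}_{k\ell}=1/n_m$ whenever $k,\ell\in B_m$ and zero otherwise, this yields
\begin{equation*}
    \tfrac{1}{2}\left<\mathbf{W},\mathbf{P}_\sigma\mathbf{D}\mathbf{P}_\sigma^\top\right>
    \;=\; \sum_{m=1}^K \frac{1}{2n_m}\sum_{k,\ell\in B_m} d_{\sigma(k),\sigma(\ell)}
    \;=\; \sum_{m=1}^K \frac{1}{2n_m}\sum_{i,j\in I_m(\sigma)} d_{ij},
\end{equation*}
which coincides with the rewritten objective of~\eqref{opt:kmean_size} evaluated at $(I_1(\sigma),\ldots,I_K(\sigma))$.

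Finally, since the map $\sigma\mapsto(I_1(\sigma),\ldots,I_K(\sigma))$ is surjective onto $\mathfrak{P}(n_1,\ldots,n_K)$ and the objective of~\eqref{opt:kmean_qap} depends on $\sigma$ only through this image, minimizing over $\sigma\in\mathfrak S^N$ is equivalent to minimizing over ordered partitions, which establishes the claim. I do not anticipate any genuine obstacle here; the only point demanding care is the bookkeeping of the permutation-to-partition correspondence and the fact that distinct permutations may induce the same ordered partition (namely, those that permute indices within each $B_k$), which is harmless because the objective is invariant under such within-block rearrangements.
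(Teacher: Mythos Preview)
Your proposal is correct and follows essentially the same approach as the paper: both apply Lemma~\ref{lem:distance} to rewrite the objective in pairwise-distance form, associate permutations to ordered partitions via $I_k=\sigma(B_k)$ with $B_k$ the consecutive index blocks, and verify that the trace inner product reproduces the intra-cluster sums. Your explicit computation of $(\mathbf{P}_\sigma\mathbf{D}\mathbf{P}_\sigma^\top)_{k\ell}=d_{\sigma(k),\sigma(\ell)}$ and your surjectivity argument are slightly more detailed than the paper's two-direction correspondence, but the underlying argument is the same.
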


\begin{proof}{Proof}
	We show that for any feasible solution of~\eqref{opt:kmean_size} there exists a feasible solution of~\eqref{opt:kmean_qap} which attains the same objective value and vice versa. To this end, for any partition $(I_1, \ldots, I_K)$ feasible in~\eqref{opt:kmean_size}, consider any permutation $\sigma \in \mathfrak{S}^N$ that satisfies $\sigma(\{ 1 + \sum_{i=1}^{k-1} n_i, \hdots, \sum_{i=1}^{k} n_i \}) = I_k$ for all $k = 1, \ldots, K${\color{black}, and denote its inverse by $\sigma^{-1}$}. This permutation is feasible in~\eqref{opt:kmean_qap}, and it achieves the same objective value as $(I_1, \ldots, I_K)$ in~\eqref{opt:kmean_size} because
\begin{align*}
	\sum_{k=1}^K \sum_{i \in I_k} \Vert \bm\xi_i - \tfrac{1}{n_k} ( \textstyle\sum_{j \in I_k} \bm\xi_j ) \Vert^2 &= \frac{1}{2} \sum_{k=1}^K \frac{1}{n_k} \sum_{i,j \in I_k} d_{ij} = \frac{1}{2} \sum_{k=1}^K \frac{1}{n_k} \sum_{i,j \in \sigma^{-1}(I_k)} d_{\sigma(i)\sigma(j)} = \frac{1}{2} \left< \mathbf{W}, \mathbf{P}_{\sigma} \mathbf{D} \mathbf{P}_{\sigma}^\top \right>,
\end{align*}
where the first equality is implied by Lemma~\ref{lem:distance}, the second equality is a consequence of the definition of $\sigma$, and the third equality follows from the definition of $\mathbf{W}$.

	Conversely, for any $\sigma \in \mathfrak{S}^N$ feasible in~\eqref{opt:kmean_qap}, consider any partition $(I_1, \ldots, I_K)$ satisfying $I_k = \sigma(\{ 1 + \sum_{i=1}^{k-1} n_i, \hdots, \sum_{i=1}^{k} n_i  \})$ for all $k = 1, \ldots, K$. This partition is feasible in~\eqref{opt:kmean_size}, and a similar reasoning as before shows that the partition achieves the same objective value as $\sigma$ in~\eqref{opt:kmean_qap}. \qed
\end{proof}
 
{\color{black} Generic quadratic assignment problems with $N$ facilities and $N$ locations can be reformulated as MILPs with $\Omega(N^2)$ binary variables via the Kaufmann and Broeckx linearization; see, {\em e.g.}, \citet[p.~2741]{Burkard2013}. The LP relaxations of these MILPs are, however, known to be weak, and give a trivial lower bound of zero; see, \emph{e.g.}, \citet[Theorem 4.1]{Zhang2013}. In Proposition~\ref{prop:kmean_milp} below we show that the intra-cluster permutation symmetry of the datapoints enables us to give an alternative MILP reformulation containing only $NK \ll \Omega(N^2)$ binary variables. We also mention that the related, yet different, cardinality-constrained exemplar-based clustering problem can be formulated as an MILP containing $\Omega(N^2)$ binary variables; see \cite{Mulvey1984}.}
 
\begin{proposition}[MILP Reformulation]
\label{prop:kmean_milp}
	The clustering problem~\eqref{opt:kmean_size} is equivalent to the MILP
\begin{equation}
\label{opt:kmean_milp} \tag{$\milp$}
\begin{aligned}
    &\text{~\em{minimize}} && \textstyle \frac{1}{2} \sum_{k=1}^K \frac{1}{n_k} \sum_{i,j=1}^N d_{ij}\eta_{ij}^k \\
    &\text{~\em{subject to}} && \pi_i^k \in \{ 0, 1 \},~ \eta_{ij}^k \in \mathbb{R}_+ && i,j = 1, \hdots, N,~ k = 1, \hdots, K\\
    &	  && \textstyle\sum_{i=1}^N \pi_i^k = n_k && k = 1, \hdots, K \\
    &	  && \textstyle\sum_{k=1}^K \pi_i^k = 1 && i = 1, \hdots, N \\
    &	  && \eta_{ij}^k \geq \pi_i^k + \pi_j^k - 1 && i,j = 1, \hdots, N,~ k = 1, \hdots, K .
\end{aligned}
\end{equation}
\end{proposition}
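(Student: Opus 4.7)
The plan is to establish the equivalence by exhibiting a value-preserving correspondence between feasible partitions of~\eqref{opt:kmean_size} and optimal feasible points of~\eqref{opt:kmean_milp}. The key observation will be that the auxiliary variables $\eta_{ij}^k$ implement the standard linearization of the binary product $\pi_i^k \pi_j^k$: since each objective coefficient $\tfrac{1}{2n_k} d_{ij}$ is nonnegative and each $\eta_{ij}^k$ appears only in lower-bound constraints, any optimal $(\pi, \eta)$ may be taken to satisfy $\eta_{ij}^k = \max(0,\, \pi_i^k + \pi_j^k - 1)$, and for $\pi \in \{0,1\}^{N \times K}$ this quantity coincides with $\pi_i^k \pi_j^k$.

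For the forward direction, given a feasible partition $(I_1, \ldots, I_K)$ of~\eqref{opt:kmean_size}, I will set $\pi_i^k = 1$ if $i \in I_k$ and $\pi_i^k = 0$ otherwise, and $\eta_{ij}^k = \pi_i^k \pi_j^k$. The cardinality and disjoint-cover constraints on $(I_1, \ldots, I_K)$ translate verbatim into $\sum_i \pi_i^k = n_k$ and $\sum_k \pi_i^k = 1$, and the chosen $\eta$ satisfies $\eta_{ij}^k \geq \pi_i^k + \pi_j^k - 1$ by inspection. Invoking Lemma~\ref{lem:distance} cluster-by-cluster, the objective of~\eqref{opt:kmean_size} rewrites as $\tfrac{1}{2}\sum_{k} \tfrac{1}{n_k}\sum_{i,j \in I_k} d_{ij}$, which coincides with the MILP objective under this choice of $(\pi, \eta)$. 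Conversely, given any feasible $(\pi, \eta)$ of~\eqref{opt:kmean_milp}, I will replace $\eta_{ij}^k$ by $\pi_i^k \pi_j^k$, which does not increase the objective, and read off the partition $I_k = \{i : \pi_i^k = 1\}$; the equality constraints $\sum_i \pi_i^k = n_k$ and $\sum_k \pi_i^k = 1$ guarantee $(I_1, \ldots, I_K) \in \mathfrak{P}(n_1, \ldots, n_K)$, and Lemma~\ref{lem:distance} again shows that the two objective values match.

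I do not anticipate a serious obstacle, as the argument essentially amounts to verifying a standard McCormick-type linearization of a product of two binary variables in the presence of nonnegative cost coefficients. The only minor point worth flagging is that diagonal terms $i = j$ contribute nothing to the objective because $d_{ii} = 0$, so any slack in $\eta_{ii}^k$ at a feasible solution is immaterial and need not be eliminated for the argument to go through.
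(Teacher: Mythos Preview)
Your proposal is correct and follows essentially the same approach as the paper: both arguments eliminate $\eta_{ij}^k$ by observing that at optimality one may take $\eta_{ij}^k = \max\{0,\pi_i^k+\pi_j^k-1\} = \pi_i^k\pi_j^k$ (since the $d_{ij}\ge 0$), and then establish a value-preserving bijection between partitions in~\eqref{opt:kmean_size} and feasible binary $\pi$ via Lemma~\ref{lem:distance}. Your explicit remark about the diagonal terms $d_{ii}=0$ is a nice touch but not needed, and the paper simply absorbs it into the same calculation.
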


The binary variable $\pi_i^k$ in the MILP~\ref{opt:kmean_milp} satisfies $\pi_i^k=1$ if $i \in I_k$; ${\color{black}\pi_i^k=0}$ otherwise. At optimality, $\eta^k_{ij} = \max \{ \pi^k_i + \pi^k_j - 1, 0 \}$ is equal to 1 if $i,j \in I_k$ ({\em i.e.}, $\pi^k_i = \pi^k_j = 1$) and 0 otherwise.

\begin{proof}{Proof of Proposition~\ref{prop:kmean_milp}}
At optimality, the decision variables $\eta_{ij}^k$ in problem~\ref{opt:kmean_milp} take the values $\eta^k_{ij} = \max \{ \pi_i^k + \pi_j^k - 1, 0 \}$. Accordingly, problem~\ref{opt:kmean_milp} can equivalently be stated as
\begin{equation}
\tag{$\milp$$'$}
\label{opt:kmean_milp_prime}
\begin{aligned}
    &\text{~minimize} && \textstyle\frac{1}{2} \sum_{k=1}^K \frac{1}{n_k} \sum_{i,j=1}^N d_{ij}\max \{ \pi_i^k + \pi_j^k - 1, 0 \} \\
    &\text{~subject to} && \pi_i^k \in \{ 0, 1 \} &&\hspace{-30mm} i = 1, \hdots, N,~ k = 1, \hdots, K\\
    &      && \textstyle\sum_{i=1}^N \pi_i^k = n_k &&\hspace{-30mm} k = 1, \hdots, K \\
    &      && \textstyle\sum_{k=1}^K \pi_i^k = 1 &&\hspace{-30mm} i = 1, \hdots, N.
\end{aligned}
\end{equation}
In the following, we show that any feasible solution of~\eqref{opt:kmean_size} gives rise to a feasible solution of~\ref{opt:kmean_milp_prime} with the same objective value and vice versa. To this end, consider first a partition $(I_1, \ldots, I_K)$ that is feasible in~\eqref{opt:kmean_size}. Choosing $\pi_i^k = 1$ if $i \in I_k$ and $\pi_i^k = 0$ otherwise, $k = 1, \ldots, K$, is feasible in~\ref{opt:kmean_milp_prime} and attains the same objective value as $(I_1, \ldots, I_K)$ in~\eqref{opt:kmean_size} since
\begin{equation*}
	\sum_{k=1}^K \sum_{i \in I_k} \Vert \bm\xi_i - \tfrac{1}{n_k}( {\textstyle\sum_{j \in I_k} \bm\xi_j )} \Vert^2 = \frac{1}{2} \sum_{k=1}^K \frac{1}{n_k} \sum_{i,j \in I_k} d_{ij} = \frac{1}{2} \sum_{k=1}^K \frac{1}{n_k} \sum_{i,j=1}^N d_{ij}\max \{ \pi_i^k + \pi_j^k - 1, 0 \}.
\end{equation*}
Here, the first equality is implied by Lemma~\ref{lem:distance}, and the second equality follows from the construction of $\pi_i^k$. By the same argument, every $\pi_i^k$ feasible in~\ref{opt:kmean_milp_prime} gives rise to a partition $(I_1, \ldots, I_K)$, $I_k = \{ i : \pi_i^k = 1 \}$ for $k = 1, \ldots, K$, that is feasible in~\ref{opt:kmean_milp_prime} and that attains the same objective value. \qed
\end{proof}

{\color{black}
\begin{remark}
Note that zero is a (trivial) lower bound on the objective value of the LP relaxation of the MILP~\ref{opt:kmean_milp}. As a consequence, this LP relaxation is at least as tight as the LP relaxation of the Kaufmann and Broeckx exact MILP formulation of problem~\eqref{opt:kmean_qap}, which always yields the lower bound of zero. It is also possible to construct instances where the LP relaxation of the MILP~\ref{opt:kmean_milp} is strictly tighter.
\end{remark}
}

{\color{black}$K$-means clustering with cardinality constraints is known to be NP-hard as it is a special case of cardinality-constrained $p$-norm clustering, which was shown to be NP-hard (for any $p>1$) by \cite{Bertoni2012}. The restriction to the Euclidean norm (\emph{i.e.}, $p=2$), however, allows for a more concise proof, which is given in the following proposition.}

\begin{proposition}
\label{prop:np_hardness}
$K$-means clustering with cardinality constraints is NP-hard even for $K = 2$. Hence, unless {\em P $=$ NP}, there is no polynomial time algorithm for solving problem~\eqref{opt:kmean_size}.
\end{proposition}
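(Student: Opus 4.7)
The plan is to establish NP-hardness via a polynomial-time Turing reduction from the unconstrained $K=2$-means clustering problem, which is known to be NP-hard in $\mathbb{R}^d$ (with $d$ part of the input) by Aloise et al.~(2009). Treating problem~\eqref{opt:kmean_size} as an oracle, I will solve any instance of the unconstrained problem by invoking it at most $N-1$ times.

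Given an arbitrary instance of unconstrained $K=2$-means on $N$ datapoints, I will exploit the fact that any clustering into two non-empty parts is characterised by some cardinality pair $(n_1, N - n_1)$ with $n_1 \in \{1, \dots, N-1\}$. The optimal value of the unconstrained problem therefore equals the minimum, taken over these $N - 1$ cardinality pairs, of the optimal value of the cardinality-constrained instance with sizes $(n_1, N - n_1)$. Computing this minimum requires at most $N - 1$ calls to an oracle for problem~\eqref{opt:kmean_size} on inputs of the original size, which is a polynomial-time Turing reduction. Hence any polynomial-time algorithm for the cardinality-constrained problem would yield one for the unconstrained one, contradicting Aloise et al.~(2009). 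The passage from the optimisation to the decision setting is routine, since the objective of any given partition is evaluable in polynomial time.

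The argument is essentially a one-line enumeration once the hardness of unconstrained $K=2$-means is invoked. The main thing to check is that the result of Aloise et al.~(2009) does deliver NP-hardness specifically for $K=2$ in arbitrary input dimension and without any a priori restriction on cluster sizes, which is precisely what their main theorem asserts. If a fully self-contained argument is preferred, a natural alternative is a direct reduction from \textsc{Min-Bisection}: I would embed each vertex $v$ of a graph $G=(V,E)$ as the indicator vector of its incident edges in $\mathbb{R}^{|E|}$, and then use Lemma~\ref{lem:distance} together with the identity $\|p_u - p_v\|^2 = \deg(u) + \deg(v) - 2 A_{uv}$ to show that, under equal cardinalities $n_1 = n_2 = |V|/2$, the inter-cluster portion of the total pairwise squared distance differs from the bisection cut only by an additive constant, so that minimising the $K$-means objective reduces to minimising the bisection cut.
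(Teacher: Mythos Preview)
Your main argument is correct and is essentially the same as the paper's: both reduce unconstrained $K{=}2$-means (NP-hard by Aloise et al.~2009) to the cardinality-constrained version by enumerating the possible cardinality splits, the only cosmetic difference being that the paper exploits the symmetry $(n_1,n_2)\leftrightarrow(n_2,n_1)$ to use $\lfloor N/2\rfloor$ rather than $N-1$ oracle calls. Your optional \textsc{Min-Bisection} route is not in the paper and is a sound alternative, though note that the inter-cluster sum relates to the cut by an affine (not merely additive) transformation, which still yields the desired equivalence.
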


\begin{proof}{Proof}
In analogy to Proposition~\ref{prop:kmean_milp}, one can show that the unconstrained $K$-means clustering problem can be formulated as a variant of problem \ref{opt:kmean_milp} that omits the first set of assignment constraints, which require that $\sum_{i=1}^N \pi^k_i = n_k$ for all k$ = 1, \ldots, K$, and replaces the (now unconstrained) cardinality $n_k$ in the objective function by the size of $I_k$, which can be expressed as $\sum_{i=1}^N \pi^k_i$. If $K = 2$, we can thus solve the unconstrained $K$-means clustering problem by solving problem \ref{opt:kmean_milp} for all cluster cardinality combinations $(n_1, n_2) \in \{ (1, N - 1), (2, N - 2), \ldots, (\lfloor N / 2 \rfloor, \lceil N / 2 \rceil) \}$ and selecting the clustering with the lowest objective value. {\color{black}Thus, in this case, if problem $\milp$ {\color{black}were} polynomial-time solvable, then so would be the unconstrained $K$-means clustering problem. This, however, would contradict} Theorem~1 in \cite{Aloise2009}, which shows that the unconstrained $K$-means clustering problem is NP-hard even for $K = 2$ clusters. \qed
\end{proof}

{\color{black} In the context of balanced clustering, similar hardness results have been established by \cite{Pyatkin2017}. Specifically, they prove that the balanced $K$-means clustering problem is NP-complete for $K \geq 2$ and $\frac{N}{K} \geq 3$  (\emph{i.e.}, the shared cardinality of all clusters is greater than or equal to three). In contrast, if $K \geq 2$ and $\frac{N}{K} = 2$ (\emph{i.e.}, each cluster should contain two points), balanced $K$-means clustering reduces to a minimum-weight perfect matching problem that can be solved in polynomial-time by different algorithms; see \citet[Table I]{Cook1999} for a review.}

In $K$-means clustering {\em without} cardinality constraints, the convex hulls of the optimal clusters do not overlap, and thus each cluster fits within a separate cell of a Voronoi partition of $\mathbb R^d$; see \emph{e.g.}, \citet[Theorem~2.1]{hasegawa1993cient}. We demonstrate below that this property is preserved in the presence of cardinality constraints.

\begin{theorem}[Voronoi Partition]
\label{thm:voronoi}
	For every optimal solution to problem~\eqref{opt:kmean_size}, there exists a Voronoi partition of $\mathbb{R}^d$ such that each cluster is contained in exactly one Voronoi cell.
\end{theorem}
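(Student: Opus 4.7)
The plan is to exhibit $K$ Voronoi centers $\bm v_1, \ldots, \bm v_K \in \mathbb{R}^d$ whose induced Voronoi partition contains each cluster of a given optimal solution $(I_1^\star, \ldots, I_K^\star)$ in exactly one cell; throughout I write $\bm c_k = \frac{1}{n_k}\sum_{i \in I_k^\star} \bm\xi_i$ for the cluster centroids. I would start from a swap-optimality condition: for any $i \in I_k^\star$ and $j \in I_\ell^\star$ with $k \neq \ell$, exchanging $i$ and $j$ between the two clusters preserves the cardinality constraints but, by optimality of $(I_1^\star, \ldots, I_K^\star)$, cannot strictly lower the objective. Using Lemma~\ref{lem:distance} to rewrite the within-cluster cost as a sum of squared pairwise distances and tracking the centroid updates induced by the swap, a direct calculation yields
\[ (\|\bm\xi_i - \bm c_\ell\|^2 - \|\bm\xi_i - \bm c_k\|^2) + (\|\bm\xi_j - \bm c_k\|^2 - \|\bm\xi_j - \bm c_\ell\|^2) \;\geq\; \|\bm\xi_i - \bm\xi_j\|^2 \left(\tfrac{1}{n_k} + \tfrac{1}{n_\ell}\right) \;\geq\; 0 \]
for all $i \in I_k^\star$ and $j \in I_\ell^\star$. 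This plays, in the constrained setting, the role of the pointwise optimality condition used in the unconstrained proof of the Voronoi property, and in particular implies that for every pair $(k,\ell)$ the clusters $I_k^\star$ and $I_\ell^\star$ are separated by an affine hyperplane with normal $\bm c_k - \bm c_\ell$.

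Next I would recast $(I_1^\star, \ldots, I_K^\star)$ as an optimal solution of the capacitated linear assignment LP whose cost of assigning $\bm\xi_i$ to cluster $k$ equals $\|\bm\xi_i - \bm c_k\|^2$ and whose column sums are fixed to $n_k$: any feasible assignment with strictly smaller LP cost would, after re-centering its clusters, yield a clustering strictly better than $(I_1^\star,\ldots,I_K^\star)$ in~\eqref{opt:kmean_size}. Invoking LP duality produces multipliers $\beta_1,\ldots,\beta_K \in \mathbb{R}$ for the capacity constraints such that $\|\bm\xi_i - \bm c_k\|^2 - \beta_k \leq \|\bm\xi_i - \bm c_\ell\|^2 - \beta_\ell$ for every $i \in I_k^\star$ and every $\ell$. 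Geometrically, this realizes the optimal clustering as the assignment induced by the power (Laguerre) diagram with sites $\bm c_k$ and weights $\beta_k$.

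Finally I would construct the Voronoi centers from the power-diagram data. A first attempt is $\bm v_k = \bm c_k + \bm u$ for a common shift $\bm u \in \mathbb{R}^d$; requiring the Voronoi bisectors to coincide with the power-diagram bisectors reduces to the linear system $(\bm c_k - \bm c_\ell) \cdot \bm u = \tfrac{1}{2}(\beta_\ell - \beta_k)$ for all $k,\ell$. The hard part will be ensuring that suitable centers in $\mathbb{R}^d$ always exist: this linear system may be overdetermined when $K-1 > d$, and simple examples (already in $\mathbb{R}^1$ with $K=2$) show that the centroids themselves need not induce a valid Voronoi partition. I would address this by exploiting the slack in the LP dual (the feasibility polyhedron for $\beta$ is not a singleton) together with the strictly positive margin $\|\bm\xi_i - \bm\xi_j\|^2(\tfrac{1}{n_k} + \tfrac{1}{n_\ell})$ in the swap inequality, which provides room to perturb the Voronoi centers away from the centroids while preserving the pairwise separations; verifying that this construction yields $K$ points in $\mathbb{R}^d$ whose Voronoi cells each contain the corresponding cluster is the technical heart of the proof.
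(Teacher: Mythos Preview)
The paper's proof is considerably shorter than your proposal: it carries out only your first step. Using the same two-point swap between clusters $k$ and $\ell$ (argued by contradiction rather than by deriving your explicit lower bound), it shows that $(\bm c_k-\bm c_\ell)^\top(\bm\xi_{i}-\bm\xi_{j})\ge 0$ for all $i\in I_k^\star$, $j\in I_\ell^\star$, hence that every pair of optimal clusters is separated by a hyperplane with normal $\bm c_k-\bm c_\ell$. It then simply asserts that pairwise separability ``in turn implies the existence of the desired Voronoi partition'' and stops. There is no LP duality, no power diagram, and no construction of Voronoi centers.

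Your second and third paragraphs therefore go well beyond what the paper does, and the difficulty you flag is real: the structure that falls out of the capacitated assignment LP is a power (Laguerre) diagram, and not every power diagram in $\mathbb{R}^d$ is an ordinary Voronoi diagram in $\mathbb{R}^d$. The paper glosses over this distinction rather than resolving it. Your proposed remedy---exploiting slack in the dual together with the strictly positive swap margin to perturb the sites---is not obviously sufficient when $K-1>d$, and I would not expect it to close the gap in general; the honest conclusion of your argument is a power-diagram statement, not a Voronoi one. In short, your first paragraph already reproduces the paper's proof, and the rest of your proposal is an attempt (still incomplete) to fill a gap the paper leaves open.
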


\begin{proof}{Proof}
We show that for every optimal clustering $(I_1, \hdots, I_K)$ of \eqref{opt:kmean_size} and every $k, \ell \in \{ 1, \ldots, K \}$, $k<\ell$, there exists a hyperplane separating the points in $I_k$ from those in $I_{\ell}$. This in turn implies the existence of the desired Voronoi partition. 
{\color{black}Given a cluster $I_m$ for any $m \in \{1, \hdots, K \}$, define its cluster center as $\bm\zeta_m = \frac{1}{n_m} \sum_{i \in I_m} \bm\xi_i$, and let $\bm{h} = \bm\zeta_k - \bm\zeta_{\ell}$ be the vector that connects the cluster centers of $I_k$ and $I_\ell$.} The statement holds if $\bm{h}^\top (\bm\xi_{i_k} - \bm\xi_{i_{\ell}}) \geq 0$ for all $i_k \in I_k$ and $i_{\ell} \in I_{\ell}$ as $\bm{h}$ itself determines a separating hyperplane for $I_k$ and $I_{\ell}$ in that case. We thus assume that $\bm{h}^\top (\bm\xi_{i_k} - \bm\xi_{i_{\ell}}) < 0$ for some $i_k \in I_k$ and $i_{\ell} \in I_{\ell}$. However, this contradicts the optimality of the clustering $(I_1, \hdots, I_K)$~because
\begin{equation*}
\begin{aligned}
	\bm{h}^\top (\bm\xi_{i_k} - \bm\xi_{i_{\ell}}) < 0 \;
	&\Longleftrightarrow \; ( \bm\zeta_k - \bm\zeta_{\ell} )^\top (\bm\xi_{i_k} - \bm\xi_{i_{\ell}}) < 0 \\
	&\Longleftrightarrow \; \bm\xi_{i_k}^\top \bm\zeta_k +\ \bm\xi_{i_{\ell}}^\top \bm\zeta_{\ell} < \bm\xi_{i_k}^\top \bm\zeta_{\ell} + \bm\xi_{i_{\ell}}^\top \bm\zeta_k \\
	&\Longleftrightarrow \; \Vert \bm\xi_{i_{\ell}} - \bm\zeta_k \Vert^2 + \Vert \bm\xi_{i_k} - \bm\zeta_{\ell} \Vert^2 < \Vert \bm\xi_{i_k} - \bm\zeta_k \Vert^2 + \Vert \bm\xi_{i_{\ell}} - \bm\zeta_{\ell} \Vert^2,
\end{aligned}
\end{equation*}
where the last equivalence follows from multiplying both sides of the second inequality with $2$ and then completing the squares by adding $\bm\xi_{i_k}^\top \bm\xi_{i_k} + \bm\zeta_{k}^\top \bm\zeta_{k} + \bm\xi_{i_\ell}^\top \bm\xi_{i_\ell} + \bm\zeta_{\ell}^\top \bm\zeta_{\ell} $ on both sides.
Defining $\tilde{I}_k = I_k \cup \{ i_{\ell} \} \setminus \{ i_k \}$ and $\tilde{I}_{\ell}= I_{\ell} \cup \{ i_k \} \setminus \{ i_{\ell} \}$, the above would imply that
\begin{equation*}
\begin{aligned}
	&\sum_{i \in \tilde{I}_k} \Vert \bm\xi_i - \bm\zeta_k \Vert^2 + \sum_{i \in \tilde{I}_{\ell}} \Vert \bm\xi_i - \bm\zeta_{\ell} \Vert^2 + \sum_{\substack{m = 1,\ldots,K \\ m \not\in \{ k, \ell \}}} \; \sum_{i \in I_m} \Vert \bm\xi_i - \bm\zeta_m \Vert^2 \\
	& \quad < \ \sum_{i \in I_k} \Vert \bm\xi_i - \bm\zeta_k \Vert^2 + \sum_{i \in I_{\ell}} \Vert \bm\xi_i - \bm\zeta_{\ell} \Vert^2 + \sum_{\substack{m = 1,\ldots,K \\ m \not\in \{ k, \ell \}}} \; \sum_{i \in I_m} \Vert \bm\xi_i - \bm\zeta_m \Vert^2.
\end{aligned}
\end{equation*}
The left-hand side of the above inequality represents an upper bound on the sum of squared intra-cluster distances attained by the clustering $(I_1, \hdots, \tilde{I}_k, \ldots, \tilde{I}_{\ell}, \ldots, I_K)$ since $\bm\zeta_k$ and $\bm\zeta_{\ell}$ may not coincide with the minimizers $ \tfrac{1}{n_k}\sum_{i \in \tilde{I}_k} \bm\xi_i$ and $\tfrac{1}{n_\ell}\sum_{i \in \tilde{I}_{\ell}} \bm\xi_i$, respectively. {\color{black}Recall that the cluster centers are chosen so as to minimize the sum of the distances from the cluster center to each point in the cluster.} We thus conclude that the clustering $(I_1, \hdots, \tilde{I}_k, \ldots, \tilde{I}_{\ell}, \ldots, I_K)$ attains a strictly lower objective value than $(I_1, \hdots, I_K)$ in problem~\eqref{opt:kmean_size}, which is a contradiction. \qed
\end{proof}

\section{Cardinality-Constrained Clustering without Outliers}
\label{sec:no-outliers}
We now relax the intractable MILP~\ref{opt:kmean_milp} to tractable conic programs that yield efficiently computable lower and upper bounds on~\ref{opt:kmean_milp}.

\subsection{Convex Relaxations and Rounding Algorithm} 
\label{sec:relax}
We first eliminate the $\eta^k_{ij}$ variables from~\ref{opt:kmean_milp} by re-expressing the problem's objective function as
\begin{equation*}
\begin{aligned}	
	\frac{1}{2}\sum_{k=1}^K \frac{1}{n_k} \sum_{i,j=1}^N d_{ij}\eta^k_{ij}
	= \frac{1}{2}\sum_{k=1}^K \frac{1}{n_k} \sum_{i,j=1}^N d_{ij} \max \{ \pi^k_i + \pi^k_j - 1, 0 \}
	= \frac{1}{2}\sum_{k=1}^K \frac{1}{n_k} \sum_{i,j=1}^N d_{ij}\pi^k_{i} \pi^k_j,
\end{aligned}
\end{equation*}
where the last equality holds because the variables $\pi^k_i$ are binary. Next, we apply the variable transformation $x^k_i \leftarrow 2\pi^k_i - 1$, whereby \ref{opt:kmean_milp} simplifies to
\begin{equation}
\begin{aligned}
	\label{eq:kmean_obj2}
    &\text{minimize} && \textstyle \frac{1}{8} \sum_{k=1}^K \frac{1}{n_k} \sum_{i,j=1}^N d_{ij}(1 + x^k_i)(1 + x^k_j) \\
    &\text{subject to} && x^k_i \in \{ -1, +1\} &&\hspace{-15mm} i= 1, \hdots, N,~ k = 1, \hdots, K\\
    &	  && \textstyle \sum_{i=1}^N x^k_i = 2n_k - N &&\hspace{-15mm} k = 1, \hdots, K \\
    &	  && \textstyle \sum_{k=1}^K x^k_i = 2 - K &&\hspace{-15mm} i = 1, \hdots, N.
\end{aligned}
\end{equation}
Here, $x^k_i$ takes the value $+1$ if the $i$-th datapoint is assigned to cluster $k$ and $-1$ otherwise. Note that the constraints in~\eqref{eq:kmean_obj2} are indeed equivalent to the first two constraints in~\ref{opt:kmean_milp}, respectively. 
In Theorem~\ref{thm:kmean_sdp} below we will show that the reformulation~\eqref{eq:kmean_obj2} of the MILP~\ref{opt:kmean_milp} admits the SDP relaxation
\begin{equation}
\label{opt:kmean_sdp} \tag{$\mathcal{R}_{\text{SDP}}$}
\begin{aligned}
	&~\text{{minimize}} && \textstyle \frac{1}{8} \left< \mathbf{D}, \sum_{k=1}^K \frac{1}{n_k} \left(  \mathbf{M}^k +\bm{1} \bm{1}^\top + \bm x^k\bm{1}^\top + \bm{1} (\bm x^k)^\top \right) \right> \\
    &\text{~{subject to}} && (\bm{x}^k, \mathbf{M}^k) \in \mathcal{C}^{}_{\text{SDP}} (n_k) \quad k = 1, \hdots, K \\
    & && \textstyle\sum_{k=1}^K \bm{x}^k = ( 2 - K ) \bm{1},
\end{aligned}
\end{equation}
where, for any $n\in\mathbb N$, the convex set $\mathcal{C}^{}_{\text{SDP}} (n) \subset \mathbb{R}^N \times \mathbb{S}^N$ is defined as
\begin{equation*}
	\mathcal{C}^{}_{\text{SDP}} (n) = \left\{ (\bm{x}, \mathbf{M}) \in \mathbb{R}^N \times \mathbb{S}^N:~ 
		\begin{array}{l}		
		\bm{1}^\top \bm{x} = 2n - N, \; \mathbf{M} \bm{1} = (2n-N) \bm{x} \\
		\text{diag}(\mathbf{M}) = \bm{1}, \; \mathbf{M} \succeq \bm{xx}^\top \\
		\mathbf{M} + \bm{1}\bm{1}^\top + \bm{x}\bm{1}^\top + \bm{1}\bm{x}^\top \geq \bm{0} \\
		\mathbf{M} + \bm{1}\bm{1}^\top - \bm{x}\bm{1}^\top - \bm{1}\bm{x}^\top \geq \bm{0} \\
		\mathbf{M} - \bm{1}\bm{1}^\top + \bm{x}\bm{1}^\top - \bm{1}\bm{x}^\top \leq \bm{0} \\
		\mathbf{M} - \bm{1}\bm{1}^\top - \bm{x}\bm{1}^\top + \bm{1}\bm{x}^\top \leq \bm{0}
		\end{array}
	\right\}.
\end{equation*}
Note that $\mathcal{C}^{}_{\text{SDP}} (n)$ is semidefinite representable because Schur's complement allows us to express the constraint $\mathbf{M} \succeq \bm{xx}^\top$ as a linear matrix inequality; see, {\em e.g.}, \cite{Boyd2004}. Furthermore, we point out that the last four constraints in $\mathcal{C}^{}_{\text{SDP}} (n)$ are also used in the \emph{reformulation-linearization technique} for nonconvex programs, as described by \cite{Anstreicher2009}.

We can further relax the above SDP to an LP, henceforth denoted by~$\rlp$, where the constraints $(\bm{x}^k, \mathbf{M}^k) \in \mathcal{C}^{}_{\text{SDP}} (n_k)$ are replaced with $(\bm{x}^k, \mathbf{M}^k) \in \mathcal{C}^{}_{\text{LP}} (n_k)$, and where, for any $n\in\mathbb N$, the polytope $\mathcal{C}^{}_{\text{LP}} (n)$ is obtained by removing the non-linear constraint $\mathbf{M} \succeq \bm{xx}^\top$ from $\mathcal{C}^{}_{\text{SDP}} (n)$.

\begin{theorem}[SDP and LP Relaxations]
\label{thm:kmean_sdp}
	We have {\em $\min \rlp$ $\leq \min$~\ref{opt:kmean_sdp} $\leq \min$~\ref{opt:kmean_milp}}.
\end{theorem}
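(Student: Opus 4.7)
My plan is to establish the two inequalities separately, exploiting the fact that both $\rsdp$ and $\rlp$ are lifts of the $\pm 1$ reformulation~\eqref{eq:kmean_obj2} of~\ref{opt:kmean_milp}, with $\rlp$ being a further relaxation of $\rsdp$. I will handle the easy inequality first and then focus on the constructive step for the harder one.

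For the inequality $\min\rlp \leq \min\rsdp$, I would simply observe that $\mathcal{C}_{\text{LP}}(n)$ is obtained from $\mathcal{C}_{\text{SDP}}(n)$ by removing the single constraint $\mathbf{M} \succeq \bm{xx}^\top$, and hence $\mathcal{C}_{\text{SDP}}(n)\subseteq \mathcal{C}_{\text{LP}}(n)$ for every $n$. Therefore every $\rsdp$-feasible tuple $\{(\bm x^k,\mathbf{M}^k)\}_{k=1}^K$ is also $\rlp$-feasible, and both problems assign it the same objective value. Taking infima gives the claimed inequality.

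For the inequality $\min\rsdp \leq \min~\ref{opt:kmean_milp}$, I would exhibit, for every feasible point of~\ref{opt:kmean_milp}, a feasible point of $\rsdp$ with the same objective value. Given feasible $\{\pi_i^k\}$, set $x_i^k := 2\pi_i^k-1 \in\{-1,+1\}$ (which is feasible in~\eqref{eq:kmean_obj2}, see the derivation preceding~\eqref{eq:kmean_obj2}) and define the lift $\mathbf{M}^k := \bm x^k(\bm x^k)^\top$. I then verify each defining constraint of $\mathcal{C}_{\text{SDP}}(n_k)$: the equality $\bm 1^\top\bm x^k = 2n_k-N$ and the coupling $\sum_k\bm x^k=(2-K)\bm 1$ are direct translations of the two assignment constraints in~\ref{opt:kmean_milp}; the identity $\mathbf{M}^k\bm 1 = (2n_k-N)\bm x^k$ follows by substituting $\bm 1^\top\bm x^k = 2n_k-N$; the diagonal condition $\text{diag}(\mathbf{M}^k)=\bm 1$ holds because $(x_i^k)^2=1$; and the semidefinite constraint $\mathbf{M}^k \succeq \bm x^k(\bm x^k)^\top$ holds with equality. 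The four elementwise inequalities are the step that requires the most care: I would note the factorizations
\begin{equation*}
\mathbf{M}^k + \bm 1\bm 1^\top \pm \bm x^k\bm 1^\top \pm \bm 1(\bm x^k)^\top = (\bm x^k \pm \bm 1)(\bm x^k \pm \bm 1)^\top,
\end{equation*}
which are entrywise non-negative since $\bm x^k+\bm 1\geq\bm 0$ and $\bm x^k-\bm 1\leq\bm 0$, and analogously
\begin{equation*}
\mathbf{M}^k - \bm 1\bm 1^\top \pm (\bm x^k\bm 1^\top - \bm 1(\bm x^k)^\top) = \pm(\bm x^k\pm\bm 1)(\bm x^k\mp\bm 1)^\top,
\end{equation*}
which are entrywise non-positive for the appropriate choice of signs. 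Together these yield the last four constraints of $\mathcal{C}_{\text{SDP}}(n_k)$.

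Finally, I would verify that the $\rsdp$ objective agrees with the objective of~\eqref{eq:kmean_obj2} under the lift: substituting $\mathbf{M}^k=\bm x^k(\bm x^k)^\top$ gives $\mathbf{M}^k + \bm 1\bm 1^\top + \bm x^k\bm 1^\top + \bm 1(\bm x^k)^\top = (\bm x^k+\bm 1)(\bm x^k+\bm 1)^\top$, so that
\begin{equation*}
\left\langle \mathbf{D},\,\mathbf{M}^k + \bm 1\bm 1^\top + \bm x^k\bm 1^\top + \bm 1(\bm x^k)^\top \right\rangle = \sum_{i,j=1}^N d_{ij}(1+x_i^k)(1+x_j^k),
\end{equation*}
and summing against $1/(8n_k)$ yields precisely the objective of~\eqref{eq:kmean_obj2}. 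The hardest part is really just the book-keeping for the four rank-one factorizations; everything else is immediate from the $\pm 1$ structure of $\bm x^k$.
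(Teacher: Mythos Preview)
Your proof is correct and follows essentially the same route as the paper: both argue the first inequality from $\mathcal{C}_{\text{SDP}}(n)\subseteq\mathcal{C}_{\text{LP}}(n)$, and both prove the second by lifting a feasible $\pm 1$ vector from~\eqref{eq:kmean_obj2} via $\mathbf{M}^k=\bm{x}^k(\bm{x}^k)^\top$ and verifying each constraint of $\mathcal{C}_{\text{SDP}}(n_k)$ through the rank-one factorizations. One cosmetic point: your compact $\pm$ formula for the second pair of factorizations has its signs slightly misaligned; the clean identities are $\mathbf{M}^k-\bm{11}^\top+\bm{x}^k\bm{1}^\top-\bm{1}(\bm{x}^k)^\top=-(\bm{1}-\bm{x}^k)(\bm{1}+\bm{x}^k)^\top$ and $\mathbf{M}^k-\bm{11}^\top-\bm{x}^k\bm{1}^\top+\bm{1}(\bm{x}^k)^\top=-(\bm{1}+\bm{x}^k)(\bm{1}-\bm{x}^k)^\top$, exactly as the paper writes them, but your concluding remark ``for the appropriate choice of signs'' makes clear you have the right argument.
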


\begin{proof}{Proof}
	The inequality $\min \rlp \leq \min \rsdp$ is trivially satisfied because $\mathcal{C}^{}_\text{SDP}(n)$ is constructed as a subset of $\mathcal{C}^{}_\text{LP}(n)$ for every $n\in\mathbb N$. To prove the inequality $\min \rsdp \leq \min \milp$, consider any set of binary vectors $\{\bm{x}^k \}_{k=1}^K$ feasible in~\eqref{eq:kmean_obj2} and define $\mathbf{M}^k = \bm{x}^k (\bm{x}^k)^\top$ for $k=1,\ldots,K$. By construction,  the objective value of $\{\bm{x}^k \}_{k=1}^K$ in~\eqref{eq:kmean_obj2} coincides with that of $\{(\bm x^k,\mathbf M^k)\}_{k=1}^K$ in~\ref{opt:kmean_sdp}. Moreover, the constraints in~\eqref{eq:kmean_obj2} imply that
\begin{equation*}
	\mathbf{M}^k \bm{1} = \bm{x}^k (\bm{x}^k)^\top\bm{1} = (2n_k-N) \bm x^k, \quad  
	\text{{diag}}(\mathbf{M}^k) = \bm{1}, \quad
	\mathbf{M}^k \succeq \bm x^k (\bm x^k)^\top
\end{equation*}
and
\begin{equation*}
\begin{aligned}
	&\mathbf{M}^k + \bm{11}^\top + \bm{x}^k\bm{1}^\top + \bm{1}(\bm{x}^k)^\top = +(\bm{1} + \bm{x}^k)(\bm{1} + \bm{x}^k)^\top \geq \bm{0} \\
	&\mathbf{M}^k + \bm{11}^\top - \bm{x}^k\bm{1}^\top - \bm{1}(\bm{x}^k)^\top = +(\bm{1} - \bm{x}^k)(\bm{1} - \bm{x}^k)^\top \geq \bm{0} \\
	&\mathbf{M}^k - \bm{11}^\top + \bm{x}^k\bm{1}^\top - \bm{1}(\bm{x}^k)^\top = -(\bm{1} - \bm{x}^k)(\bm{1} + \bm{x}^k)^\top \leq \bm{0} \\
	&\mathbf{M}^k - \bm{11}^\top - \bm{x}^k\bm{1}^\top + \bm{1}(\bm{x}^k)^\top = -(\bm{1} + \bm{x}^k)(\bm{1} - \bm{x}^k)^\top \leq \bm{0},
\end{aligned}
\end{equation*}
which ensures that $(\bm{x}^k, \mathbf{M}^k) \in \mathcal{C}^{}_\text{SDP}(n_k)$ for every $k$. Finally, the constraint $\sum_{k=1}^K \bm x^k=(2-K)\bm 1$ in~\ref{opt:kmean_sdp} coincides with the last constraint in~\eqref{eq:kmean_obj2}. Thus, $\{(\bm x^k,\mathbf M^k)\}_{k=1}^K$ is feasible in~\ref{opt:kmean_sdp}. The desired inequality now follows because any feasible point in~\eqref{eq:kmean_obj2} corresponds to a feasible point in~\ref{opt:kmean_sdp} with the same objective value. Note that the converse implication is generally false. \qed
\end{proof}

\begin{remark}
In the special case when $K = 2$, we can half the number of variables in {\em \ref{opt:kmean_sdp}} and $\rlp$ by setting $\bm{x}^2=-\bm{x}^1$ and $\mathbf{M}^2=\mathbf{M}^1$ without loss of generality.
\end{remark}

{\color{black}It is possible to show that $\mathcal{R}_{\rm{LP}}$ is at least as tight as the na\"ive LP relaxation $\mathcal{L}$ of the MILP~\ref{opt:kmean_milp}, where the integrality constraints are simply ignored. One can also construct instances where $\mathcal{R}_{\rm{LP}}$ is strictly tighter than $\mathcal{L}$. We also emphasize that both LP relaxations entail $\mathcal{O}(N^2K)$ variables and $\mathcal{O}(N^2K)$ constraints.

\begin{proposition}
\label{prop:rlp_vs_milplp}
We have $\min \mathcal{R}_{\rm{LP}}$ $\geq \min \mathcal{L}$.
\end{proposition}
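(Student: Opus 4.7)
The plan is to show $\min\mathcal{L}\leq\min\rlp$ by constructing, from any feasible point $\{(\bm x^k,\mathbf M^k)\}_{k=1}^K$ of $\rlp$, an explicit feasible point $\{\pi_i^k,\eta_{ij}^k\}$ of $\mathcal L$ with the same objective value. The construction should simply undo the derivation of $\rlp$: inverting the substitution $x_i^k = 2\pi_i^k-1$ that was used immediately after~\ref{opt:kmean_milp} suggests setting $\pi_i^k = \tfrac{1}{2}(1+x_i^k)$, and since $M_{ij}^k$ plays the role of $x_i^k x_j^k$ in the relaxation and $\pi_i^k\pi_j^k = \tfrac{1}{4}(1+x_i^k+x_j^k+x_i^k x_j^k)$, the natural choice is $\eta_{ij}^k = \tfrac{1}{4}(M_{ij}^k+1+x_i^k+x_j^k)$.

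With this substitution the objective $\tfrac{1}{2}\sum_k\tfrac{1}{n_k}\sum_{i,j}d_{ij}\eta_{ij}^k$ of $\mathcal L$ becomes, after multiplying out, exactly the $\rlp$ objective $\tfrac{1}{8}\langle\mathbf D,\sum_k\tfrac{1}{n_k}(\mathbf M^k+\bm{1}\bm{1}^\top+\bm x^k\bm{1}^\top+\bm{1}(\bm x^k)^\top)\rangle$, so the values coincide. The remainder of the proof is a direct verification of the four groups of $\mathcal L$-constraints, each of which should match a defining inequality of $\mathcal C_{\text{LP}}(n_k)$: the cardinality constraint $\sum_i\pi_i^k=n_k$ follows from $\bm 1^\top\bm x^k=2n_k-N$; the partition constraint $\sum_k\pi_i^k=1$ follows from $\sum_k\bm x^k=(2-K)\bm 1$; the bound $\pi_i^k\in[0,1]$ follows by reading off the diagonal of the two $\geq\bm 0$ elementwise constraints together with $\text{diag}(\mathbf M^k)=\bm 1$; non-negativity $\eta_{ij}^k\geq 0$ is precisely the constraint $\mathbf M^k+\bm{1}\bm{1}^\top+\bm x^k\bm{1}^\top+\bm{1}(\bm x^k)^\top\geq\bm 0$; and finally the McCormick-type inequality $\eta_{ij}^k\geq\pi_i^k+\pi_j^k-1$ reduces, after substitution, to $M_{ij}^k+1-x_i^k-x_j^k\geq 0$, which is exactly the constraint $\mathbf M^k+\bm{1}\bm{1}^\top-\bm x^k\bm{1}^\top-\bm{1}(\bm x^k)^\top\geq\bm 0$ in the definition of $\mathcal C_{\text{LP}}(n_k)$.

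There is no real obstacle: the entire argument is a bookkeeping exercise that exploits the fact that the RLT-style elementwise inequalities in $\mathcal C_{\text{LP}}(n)$ were designed to be tight on $\{-1,+1\}$-vectors. The only small thing to keep in mind is that the two ``upper-bound'' elementwise constraints in $\mathcal C_{\text{LP}}(n)$ (those with $\leq\bm 0$) are not needed for the mapping into $\mathcal L$; they merely give $\mathcal{R}_{\rm{LP}}$ additional strength beyond $\mathcal L$, which is consistent with the inequality being, in general, strict on suitable instances as claimed in the surrounding discussion.
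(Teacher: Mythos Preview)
Your proposal is correct and follows exactly the same construction as the paper: set $\pi_i^k=\tfrac{1}{2}(1+x_i^k)$ and $\eta_{ij}^k=\tfrac{1}{4}(M_{ij}^k+1+x_i^k+x_j^k)$, then verify feasibility in $\mathcal L$ and equality of objective values. Your write-up is in fact slightly more thorough than the paper's, which only records the checks $\sum_k\pi_i^k=1$, $\sum_i\pi_i^k=n_k$, and $\eta_{ij}^k\ge\pi_i^k+\pi_j^k-1$, whereas you also spell out why $\pi_i^k\in[0,1]$ and $\eta_{ij}^k\ge 0$ hold.
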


\begin{proof}{Proof}
Consider a feasible solution $\{ (\bm{x}^k, \mathbf{M}^k) \}_{k=1}^K$ of $\mathcal{R}_{\text{LP}}$. Its feasibility implies that 
$$\text{(a) } \textstyle\sum_{k = 1}^K x^k_i = 2 - K~\forall i, \qquad
    \text{(b) } \textstyle\sum_{i = 1}^N x^k_i = 2n_k - N~\forall k, \qquad 
    \text{(c) }  m_{ij}^k - x^k_i - x^k_j + 1 \geq 0~\forall i,j,k.
$$ 
Next, set $\pi^k_i = (x^k_i + 1)/2$ and $\eta^k_{ij} = \frac{1}{4}(m^k_{ij} + x^k_i + x^k_j + 1)$ for all $i,j,k$. Then,
$$\text{(a') } \textstyle\sum_{k = 1}^K \pi^k_i = 1~\forall i, \qquad 
    \text{(b') } \textstyle\sum_{i = 1}^N \pi^k_i = n_k~\forall k, \qquad 
    \text{(c') } \eta^k_{ij} \geq \pi^k_i + \pi^k_j - 1~\forall i,j,k.
$$ 
Hence, this solution is feasible in $\mathcal{L}$. A direct calculation also reveals that both solutions attain the same objective value in their respective optimization problems. This confirms that $\mathcal{R}_{\text{LP}}$ is a relaxation that is at least as tight as $\mathcal{L}$. \qed
\end{proof}
}

Next, we develop a rounding algorithm that recovers a feasible clustering (and thus an upper bound on~\ref{opt:kmean_milp}) from an optimal solution of the relaxed problem~\ref{opt:kmean_sdp} or $\rlp$; see Algorithm~\ref{alg:det_rounding_multi}.
 
\begin{algorithm}
\caption{Rounding algorithm for cardinality-constrained clustering}
\label{alg:det_rounding_multi}
\begin{algorithmic}[1]
\State \textbf{Input:} $\mathcal{I}_1=\{1,\ldots,N\}$ (data indices), $n_k\in\mathbb N, \ k = 1,\ldots,K$ (cluster sizes).
\State Solve~\ref{opt:kmean_sdp} or $\rlp$ for the datapoints $\bm\xi_i$, $i \in \mathcal{I}_1$, and record the optimal $\bm x^1,\ldots, \bm x^K \in\mathbb R^N$.
\State Solve the linear assignment problem
\begin{equation*}
	\mathbf{\Pi}' \in
	\underset{\mathbf{\Pi}}{\text{argmax}} \left\{ 
		\sum_{i = 1}^N \sum_{k = 1}^K \pi_i^kx^k_i:~
		\pi_i^k \in \{ 0, 1\},~
		\sum_{i = 1}^N \pi_i^k = n_k \;\; \forall k,~
		\sum_{k = 1}^K \pi_i^k = 1 \;\; \forall i
	\right\}.
\end{equation*}
\State Set $I'_k \leftarrow \{ i: (\pi')_i^k = 1 \}$ for all $k = 1, \hdots, K$.
\State Set $\bm\zeta_k \leftarrow \frac{1}{n_k}\sum_{i \in I'_k} \bm\xi_i$ for all $k = 1, \hdots, K$.
\State Solve the linear assignment problem
\begin{equation*}
	\mathbf{\Pi}^\star \in
	\underset{\mathbf{\Pi}}{\text{argmin}} \left\{ 
		\sum_{i = 1}^N \sum_{k = 1}^K \pi_i^k \Vert \bm\xi_i - \bm\zeta_k \Vert^2 :~
		\pi_i^k \in \{ 0, 1\},~
		\sum_{i = 1}^N \pi_i^k = n_k \;\; \forall k,~
		\sum_{k = 1}^K \pi_i^k = 1 \;\; \forall i
	\right\}.
\end{equation*}
\State Set $I_k \leftarrow \{ i: (\pi^\star)_i^k = 1 \}$ for all $k = 1, \hdots, K$.
\State \textbf{Output:} $I_1, \hdots, I_K$.
\end{algorithmic}
\end{algorithm}

Recall that the continuous variables $\bm x^k=(x_1^k,\hdots,x^k_N)^\top$ in~\ref{opt:kmean_sdp} and $\rlp$ correspond to the binary variables in~\eqref{eq:kmean_obj2} with identical names. This correspondence motivates us to solve a linear assignment problem in Step~3 of Algorithm~\ref{alg:det_rounding_multi}, which seeks a matrix $\bm \Pi\in\{0,1\}^{N\times K}$ with $\pi_i^k\approx \frac{1}{2}(x_i^k+1)$ for all $i$ and $k$ subject to the prescribed cardinality constraints. Note that even though this assignment problem constitutes an MILP, it can be solved in polynomial time because its constraint matrix is totally unimodular, implying that its LP relaxation is exact. Alternatively, one may solve the assignment problem using the Hungarian algorithm; see, {\em e.g.},~\cite{Burkard2009}.
 
Note that Steps 5--7 of Algorithm~\ref{alg:det_rounding_multi} are reminiscent of a \emph{single} iteration of Lloyd's algorithm~for cardinality-constrained $K$-means clustering as described by \cite{Bennett2000}. Specifically, Step~5 calculates the cluster centers $\bm\zeta_k$, while Steps~6 and~7 reassign each point to the nearest~center while adhering to the cardinality constraints. Algorithm~\ref{alg:det_rounding_multi} thus follows just one step of Lloyd's algorithm initialized with an optimizer of~\ref{opt:kmean_sdp} or~$\rlp$. This refinement step ensures that the output clustering is compatible with a Voronoi partition of $\mathbb R^d$, which is desirable~in~view~of~Theorem~\ref{thm:voronoi}. 

\subsection{Tighter Relaxations for Balanced Clustering}
\label{sec:balanced_clustering}

The computational burden of solving~\ref{opt:kmean_sdp} and $\rlp$ grows with $K$. We show in this section that if all clusters share the same size $n$ ({\em i.e.}, $n_k = n$ for all $k$), then~\ref{opt:kmean_sdp} can be replaced by
\begin{equation}
\label{opt:kmean_sdp_balanced} \tag{$\mathcal{R}_{\text{SDP}}^{\text{b}}$}
\begin{aligned}	
	\hspace{-3mm}&\text{minimize} \hspace{-1mm}&& \textstyle  \frac{1}{8n} \left< \mathbf{D}, \mathbf{M}^1 + \bm{11}^\top \!\!+ \bm{x}^1\bm{1}^\top\!\! + \bm{1}(\bm{x}^1)^\top\!\! + (K-1) \left( \mathbf{M} + \bm{11}^\top\!\! + \bm{x1}^\top\!\! + \bm{1x}^\top \right) \right> \hspace{-3mm} \\
	\hspace{-3mm}&\text{subject to} \hspace{-1mm}&& (\bm{x}^1, \mathbf{M}^1), (\bm{x}, \mathbf{M}) \in \mathcal{C}^{}_{\text{SDP}}(n), \quad \bm{x}^1 + (K-1)\bm{x} = (2-K)\bm{1}, \quad x^1_1 = 1,
\end{aligned}
\end{equation}
whose size no longer scales with $K$. Similarly,~$\rlp$ simplifies to the LP~$\rlpb$ obtained from \ref{opt:kmean_sdp_balanced} by replacing $\mathcal{C}^{}_{\text{SDP}}(n)$ with $\mathcal{C}^{}_{\text{LP}}(n)$. {\color{black} This is a manifestation of how symmetry can be exploited to simplify convex programs, a phenomenon which is studied in a more general setting by \cite{GatermannParrilo2004}.}

\begin{corollary}[Relaxations for Balanced Clustering]
\label{cor:kmean_sdp_balanced}
We have~{\em $\min \rlpb$ $\leq \min$~\ref{opt:kmean_sdp_balanced} $\leq \min$~\ref{opt:kmean_milp}}.
\end{corollary}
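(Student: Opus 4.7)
The plan is to mimic the proof of Theorem~\ref{thm:kmean_sdp}, but to exploit the permutation symmetry among the $K$ equally sized clusters in order to collapse the $K-1$ blocks $(\bm{x}^2,\mathbf{M}^2),\ldots,(\bm{x}^K,\mathbf{M}^K)$ into a single averaged block $(\bm{x},\mathbf{M})$. The first inequality $\min\rlpb \leq \min$~\ref{opt:kmean_sdp_balanced} is immediate because $\mathcal{C}_{\text{SDP}}(n) \subseteq \mathcal{C}_{\text{LP}}(n)$ and the two programs are otherwise identical. Hence the real work lies in showing $\min$~\ref{opt:kmean_sdp_balanced}$\leq \min$~\ref{opt:kmean_milp}.

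To this end, I would start from an arbitrary feasible solution $\{\bm{x}^k\}_{k=1}^K$ of the $\pm 1$-reformulation~\eqref{eq:kmean_obj2} of~\ref{opt:kmean_milp}. By relabelling clusters (which is without loss of generality since all cardinalities equal $n$), I may assume datapoint $1$ belongs to cluster $1$, so that $x^1_1=1$. Setting $\mathbf{M}^1=\bm{x}^1(\bm{x}^1)^\top$ gives a valid block for $\mathcal{C}_{\text{SDP}}(n)$, exactly as in the proof of Theorem~\ref{thm:kmean_sdp}. Then I would define the averaged quantities
\begin{equation*}
\bm{x} \;=\; \frac{1}{K-1}\sum_{k=2}^{K}\bm{x}^k, \qquad \mathbf{M} \;=\; \frac{1}{K-1}\sum_{k=2}^{K}\bm{x}^k(\bm{x}^k)^\top.
\end{equation*}
The linear constraints defining $\mathcal{C}_{\text{SDP}}(n)$ -- namely $\bm{1}^\top\bm{x}=2n-N$, $\mathbf{M}\bm{1}=(2n-N)\bm{x}$, $\mathrm{diag}(\mathbf{M})=\bm{1}$, together with the four elementwise inequalities -- are all preserved under convex combinations of feasible rank-one points, so they carry over to $(\bm{x},\mathbf{M})$ by averaging. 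The coupling constraint $\bm{x}^1+(K-1)\bm{x}=(2-K)\bm{1}$ reduces to $\sum_{k=1}^K\bm{x}^k=(2-K)\bm{1}$, which holds by feasibility in~\eqref{eq:kmean_obj2}.

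The one constraint that is not linear, and therefore the main obstacle, is the PSD condition $\mathbf{M}\succeq\bm{xx}^\top$. This is exactly the statement that the map $\bm{y}\mapsto\bm{y}\bm{y}^\top$ is operator convex, so it follows from Jensen's inequality in the L\"owner order: the average of the rank-one matrices $\bm{x}^k(\bm{x}^k)^\top$ dominates the outer product of the averaged vector. Once feasibility is established, the objective value is preserved by direct computation, since
\begin{equation*}
\sum_{k=2}^K\bigl(\mathbf{M}^k + \bm{1}\bm{1}^\top + \bm{x}^k\bm{1}^\top + \bm{1}(\bm{x}^k)^\top\bigr) \;=\; (K-1)\bigl(\mathbf{M}+\bm{1}\bm{1}^\top+\bm{x}\bm{1}^\top+\bm{1}\bm{x}^\top\bigr),
\end{equation*}
so that the objective of~\ref{opt:kmean_sdp} (with all $n_k=n$) evaluated at $\{(\bm{x}^k,\bm{x}^k(\bm{x}^k)^\top)\}_{k=1}^K$ coincides with that of~\ref{opt:kmean_sdp_balanced} evaluated at $\bigl((\bm{x}^1,\mathbf{M}^1),(\bm{x},\mathbf{M})\bigr)$. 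This constructs, for each feasible point of~\ref{opt:kmean_milp}, a feasible point of~\ref{opt:kmean_sdp_balanced} with identical cost, yielding the desired inequality.
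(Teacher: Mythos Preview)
Your proof is correct and follows essentially the same approach as the paper: break the cluster symmetry by relabelling so that datapoint~$1$ lies in cluster~$1$, then average the blocks indexed $k=2,\ldots,K$ into a single pair $(\bm{x},\mathbf{M})$, and verify that feasibility and the objective value are preserved. The only organisational difference is that the paper routes the argument through the intermediate SDP~\ref{opt:kmean_sdp} (with the symmetry-breaking constraint appended) and then invokes the convexity and permutation symmetry of that program to justify the averaging step abstractly, whereas you go directly from a feasible point of~\eqref{eq:kmean_obj2} to a feasible point of~\ref{opt:kmean_sdp_balanced} and check each constraint by hand, including the PSD condition via Jensen's inequality for the matrix-convex map $\bm{y}\mapsto\bm{y}\bm{y}^\top$. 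Both routes are valid; yours is slightly more explicit, while the paper's yields the marginally stronger (though not needed here) observation that~\ref{opt:kmean_sdp_balanced} has the same optimal value as~\ref{opt:kmean_sdp} with the constraint $x^1_1=1$.
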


\begin{proof}{Proof}
	The inequality $\min \rlpb \leq \min$~\ref{opt:kmean_sdp_balanced} is trivially satisfied. To prove the inequality $\min$~\ref{opt:kmean_sdp_balanced} $\leq \min$~\ref{opt:kmean_milp}, we first add the symmetry breaking constraint $x_1^1 = 1$ to the MILP~\ref{opt:kmean_milp}. Note that this constraint does not increase the optimal value of~\ref{opt:kmean_milp}. It just requires that the cluster containing the datapoint $\bm \xi_1$ should be assigned the number $k=1$. This choice is unrestrictive because all clusters have the same size. By repeating the reasoning that led to Theorem~\ref{thm:kmean_sdp}, the MILP~\ref{opt:kmean_milp} can then be relaxed to a variant of the SDP~\ref{opt:kmean_sdp} that includes the (linear) symmetry breaking constraint $x_1^1 = 1$. Note that the constraints and the objective function of the resulting SDP are invariant under permutations of the cluster indices $k=2,\hdots,K$ because $n_k=n$ for all $k$. Note also that the constraints are not invariant under permutations involving $k=1$ due to the symmetry breaking constraint. Next, consider any feasible solution $\{(\bm{x}^k, \mathbf{M}^k)\}_{k=1}^K$ of this SDP, and define
\begin{equation*}
	\bm{x} = \frac{1}{K-1}\sum_{k=2}^K \bm{x}^k \quad \text{and} \quad
	\mathbf{M} = \frac{1}{K-1}\sum_{k=2}^K \mathbf{M}^k.
\end{equation*}
Moreover, construct a permutation-symmetric solution $\{(\bm{x}_{\rm s}^k, \mathbf{M}_{\rm s}^k)\}_{k=1}^K$ by setting 
\begin{equation*}
\begin{aligned}
	&\bm{x}_{\rm s}^1=\bm x^1, && \bm{x}_{\rm s}^k=\bm x && \forall k=2,\ldots,K,\\
	&\mathbf{M}_{\rm s}^1=\mathbf{M}^1, && \mathbf{M}_{\rm s}^k=\mathbf{M} && \forall k=2,\ldots,K.
\end{aligned}
\end{equation*}
By the convexity and permutation symmetry of the SDP, the symmetrized solution $\{(\bm{x}_{\rm s}^k, \mathbf{M}_{\rm s}^k)\}_{k=1}^K$ is also feasible in the SDP and attains the same objective value as $\{(\bm{x}^k, \mathbf{M}^k)\}_{k=1}^K$. Moreover, as the choice of $\{(\bm{x}^k, \mathbf{M}^k)\}_{k=1}^K$ was arbitrary, we may indeed restrict attention to symmetrized solutions with $\bm x^k=\bm x^\ell$ and $\mathbf M^k=\mathbf M^\ell$ for all $k,\ell\in\{2,\hdots, K\}$ without increasing the objective value of the SDP. Therefore, the simplified SDP relaxation~\ref{opt:kmean_sdp_balanced} provides a lower bound on~\ref{opt:kmean_milp}. \qed
\end{proof}

If $n_k=n$ for all $k$, then the SDP and LP relaxations from Section~\ref{sec:relax} admit an optimal solution where both $\bm{x}^k$ and $\mathbf{M}^k$ are independent of $k$, in which case Algorithm~\ref{alg:det_rounding_multi} performs poorly. This motivates the improved relaxations \ref{opt:kmean_sdp_balanced} and $\rlpb$ involving the symmetry breaking constraint $x^1_1 = 1$, which ensures that---without loss of generality---the cluster harboring the first datapoint $\bm\xi_1$ is indexed by $k=1$. As the symmetry between clusters $2,\hdots,K$ persists and because any additional symmetry breaking constraint would be restrictive, the optimal solutions of \ref{opt:kmean_sdp_balanced} and $\rlpb$ only facilitate a reliable recovery of cluster~1. To recover {\em all} clusters, however, we can solve \ref{opt:kmean_sdp_balanced} or $\rlpb$ $K-1$ times over the yet unassigned datapoints, see Algorithm~\ref{alg:det_rounding_balanced}. The resulting clustering could be improved by appending one iteration of Lloyd's algorithm (akin to Steps 5--7 in Algorithm~\ref{alg:det_rounding_multi}).

{\color{black}
In contrast, the na\"ive relaxation $\mathcal{L}$ of $\milp$ becomes significantly weaker when all cardinalities are equal. To see this, we note that a solution $\pi^k_i = 1/K$ and $\eta^k_{ij} = 0$ for all $i,j = 1, \hdots, N$ and for all $k = 1, \hdots, K$ is feasible in $\mathcal{L}$ (\emph{i.e.}, it satisfies all constraints in problem $\milp$ except the integrality constraints which are imposed on $\pi^k_i$) whenever $K \geq 2$. Hence, the optimal objective value of $\mathcal{L}$ is zero. This could be avoided by adding a symmetry breaking constraint $\pi^1_1 = 1$ to problem $\mathcal{L}$ to ensure that the cluster containing the first datapoint $\bm\xi_1$ is indexed by $k = 1$. However, the improvement appears to be marginal.
}

\begin{algorithm}
\caption{Rounding algorithm for balanced clustering}
\label{alg:det_rounding_balanced}
\begin{algorithmic}[1]
\State \textbf{Input:} $\mathcal{I}_1=\{1,\ldots,N\}$ (data indices), $n\in\mathbb N$ (cluster size), $K=N/n\in\mathbb N$ (\# clusters).
\For{$k =1,\hdots, K-1$}
	\State Solve~\ref{opt:kmean_sdp_balanced} or $\rlpb$ for the datapoints $\bm\xi_i$, $i \in \mathcal{I}_k$, and record the optimal $\bm x^1\in\mathbb R^{\vert \mathcal{I}_k \vert}$.
	\State Determine a bijection $\rho : \{1, \ldots,\vert \mathcal{I}_k \vert \} \rightarrow \mathcal{I}_k$ such that $x_{\rho(1)}^1 \geq x_{\rho(2)}^1\geq\cdots\geq x_{\rho({\vert \mathcal{I}_k \vert})}^1$.
	\State Set $I_k \leftarrow \{\rho(1), \ldots, \rho(n)\}$ and $\mathcal{I}_{k+1} \leftarrow \mathcal{I}_k \backslash I_k$.
\EndFor
\State Set $I_K \leftarrow \mathcal{I}_K$.
\State \textbf{Output:} $I_1, \hdots, I_K$.
\end{algorithmic}
\end{algorithm}

{\color{black}
\subsection{Comparison to existing SDP Relaxations}

We now compare $\rsdp$ and $\rsdp^{\rm{b}}$ with existing SDP relaxations from the literature. First, we report the various SDP relaxations proposed by \cite{Peng2007} and \cite{Awasthi2015}. Then, we establish that two of them are equivalent. Finally, we show that $\rsdp$ and $\rsdp^{\rm{b}}$ are relaxations that are at least as tight as their corresponding counterparts from the literature. The numerical experiments in Section~\ref{sec:numericalexp} provide evidence that this relation can also be strict.

\cite{Peng2007} suggest two different SDP relaxations for the \emph{un}constrained $K$-means clustering problem and an SDP relaxation for the balanced $K$-means clustering problem. All of them involve a Gram matrix $\mathbf{W} \in \mathbb{S}^N$ with entries $w_{ij} = \bm{\xi}_i^\top \bm{\xi}_j$. Their stronger relaxation for the unconstrained $K$-means clustering problem takes the form
\begin{equation}
\label{opt:sdp_pw1} \tag{$\mathcal{PW}_{1}$}
\begin{aligned}
    &\text{minimize} && \hspace{-1mm}\left< \mathbf{W}, \mathbb{I}-\mathbf{Z} \right> \\
    &\text{subject to} && \mathbf{Z} \in \mathbb{S}^N \\
    &	&&\mathbf{Z} \succeq \bm 0,~\mathbf{Z} \geq \bm 0,~\mathbf{Z}\bm{1} = \bm{1},~\text{Tr}(\mathbf{Z})=K,
\end{aligned}
\end{equation}
where $\mathbb{I}$ denotes the identity matrix of dimension $N$. Note that the constraints $\mathbf{Z} \geq \mathbf{0}$ and $\mathbf{Z}\bm{1} = \bm{1}$ ensure that $\mathbf{Z}$ is a stochastic matrix, and hence all of its eigenvalues lie between 0 and 1. Thus, further relaxing the non-negativity constraints leads to the following weaker relaxation,
\begin{equation}
\label{opt:sdp_pw2} \tag{$\mathcal{PW}_{2}$}
\begin{aligned}
    &\text{minimize} && \hspace{-1mm}\left< \mathbf{W}, \mathbb{I}-\mathbf{Z} \right> \\
    &\text{subject to} && \mathbf{Z} \in \mathbb{S}^N \\
    &	&&\mathbb{I} \, \succeq \, \mathbf{Z} \, \succeq \, \bm 0,~\mathbf{Z}\bm{1} = \bm{1},~\text{Tr}(\mathbf{Z})=K.
\end{aligned}
\end{equation}
\cite{Peng2007} also demonstrate that~\ref{opt:sdp_pw2} essentially reduces to an eigenvalue problem, which implies that one can solve \ref{opt:sdp_pw2} in $\mathcal{O}(KN^2)$ time; see \cite{GolubLoan1996}. Their SDP relaxation for the \emph{balanced} $K$-means clustering problem is similar to \ref{opt:sdp_pw1} and takes the form
\begin{equation}
\label{opt:sdp_pw3} \tag{$\mathcal{PW}_{1}^{\rm{b}}$}
\begin{aligned}
    &\text{minimize} && \hspace{-1mm}\left< \mathbf{W}, \mathbb{I}-\mathbf{Z} \right> \\
    &\text{subject to} && \mathbf{Z} \in \mathbb{S}^N \\
    &	&&\mathbf{Z} \succeq \bm 0,~\bm 0 \leq  \mathbf{Z} \leq (K/N)\bm{11}^\top ,~\mathbf{Z}\bm{1} = \bm{1},~\text{Tr}(\mathbf{Z})=K.
\end{aligned}
\end{equation}

\cite{Awasthi2015} suggest another SDP relaxation for the unconstrained $K$-means clustering problem, based on the same matrix of squared pairwise distances $\mathbf{D}$ considered in this paper,
\begin{equation}
\label{opt:sdp_awasthi} \tag{$\mathcal{A}$}
\begin{aligned}
    &\text{minimize} && \left< \mathbf{D}, \mathbf{Z} \right> \\
    &\text{subject to} && \mathbf{Z} \in \mathbb{S}^N \\
    &	&&\mathbf{Z} \succeq \bm 0,~\mathbf{Z} \geq \bm 0,~\mathbf{Z}\bm{1} = \bm{1},~\text{Tr}(\mathbf{Z})=K.
\end{aligned}
\end{equation}

The following observation asserts that the stronger relaxation \ref{opt:sdp_pw1} of \cite{Peng2007} and the relaxation \ref{opt:sdp_awasthi} of \cite{Awasthi2015} are actually equivalent.

\begin{observation}
\label{obs:equi_pw_aw}
	{\em The problems \ref{opt:sdp_pw1} and \ref{opt:sdp_awasthi} are equivalent.}
\end{observation}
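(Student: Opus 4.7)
The plan is to show that the two programs have identical feasible regions and that their objective functions are related by an affine transformation with a positive multiplicative factor, thus sharing the same set of optimizers.

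First, I would observe that the feasible regions of \ref{opt:sdp_pw1} and \ref{opt:sdp_awasthi} are literally identical, both consisting of symmetric matrices $\mathbf{Z} \in \mathbb{S}^N$ satisfying $\mathbf{Z} \succeq \bm 0$, $\mathbf{Z} \geq \bm 0$, $\mathbf{Z}\bm 1 = \bm 1$, and $\text{Tr}(\mathbf{Z}) = K$. Therefore it suffices to compare the two objective functions on this common feasible set.

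Next, I would exploit the identity $d_{ij} = \|\bm\xi_i - \bm\xi_j\|^2 = w_{ii} + w_{jj} - 2 w_{ij}$ to expand $\langle \mathbf{D}, \mathbf{Z}\rangle$ as
\begin{equation*}
\langle \mathbf{D}, \mathbf{Z}\rangle \;=\; \sum_{i,j=1}^N (w_{ii} + w_{jj} - 2 w_{ij})\, z_{ij} \;=\; \sum_{i=1}^N w_{ii}\sum_{j=1}^N z_{ij} + \sum_{j=1}^N w_{jj}\sum_{i=1}^N z_{ij} - 2\langle \mathbf{W}, \mathbf{Z}\rangle.
\end{equation*}
Invoking the feasibility constraint $\mathbf{Z}\bm 1 = \bm 1$ (and, by symmetry, $\bm 1^\top \mathbf{Z} = \bm 1^\top$), the two single sums each collapse to $\text{Tr}(\mathbf{W}) = \langle \mathbf{W}, \mathbb{I}\rangle$, yielding
\begin{equation*}
\langle \mathbf{D}, \mathbf{Z}\rangle \;=\; 2\langle \mathbf{W}, \mathbb{I}\rangle - 2\langle \mathbf{W}, \mathbf{Z}\rangle \;=\; 2\langle \mathbf{W}, \mathbb{I} - \mathbf{Z}\rangle
\end{equation*}
for every feasible $\mathbf{Z}$.

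From here the conclusion is immediate: the objective of \ref{opt:sdp_awasthi} is exactly twice that of \ref{opt:sdp_pw1} at every feasible point, so the two minimization problems share the same set of optimal solutions, and their optimal values differ only by the constant factor of two. I do not foresee a genuine obstacle; the only point to be careful about is that the relation relies solely on the row-sum constraint $\mathbf{Z}\bm 1 = \bm 1$, so no other constraints need to be invoked in the simplification.
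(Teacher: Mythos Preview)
Your proof is correct and follows essentially the same approach as the paper: both arguments note that the feasible sets coincide and then establish $\langle \mathbf{D}, \mathbf{Z}\rangle = 2\langle \mathbf{W}, \mathbb{I}-\mathbf{Z}\rangle$ on that set via the identity $d_{ij} = w_{ii}+w_{jj}-2w_{ij}$ together with $\mathbf{Z}\bm 1 = \bm 1$. Your derivation is in fact somewhat more direct than the paper's, which reaches the same conclusion through a slightly longer chain of trace manipulations.
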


\begin{proof}{Proof}
	Begin by expressing the objective of \ref{opt:sdp_pw1} in terms of the pairwise distance matrix~$\mathbf{D}$,
\begin{equation}
\label{eq:equiv_gram_dist}
\begin{aligned}
	\big\langle \mathbf{W}, \mathbb{I}-\mathbf{Z} \big\rangle &= \frac{1}{2} \Big[ 2 \big\langle \mathbf{W}, \mathbb{I} \big\rangle - \big\langle 2\mathbf{W}, \mathbf{Z} \big\rangle - \big\langle \mathbf{D}, \mathbf{Z} \big\rangle \Big] + \frac{1}{2} \big\langle \mathbf{D}, \mathbf{Z} \big\rangle \\[2mm]
	&=\, \frac{1}{2} \Big[ 2 \big\langle \mathbf{W}, \mathbb{I} \big\rangle - \big\langle 2\mathbf{W}+\mathbf{D}, \mathbf{Z} \big\rangle \Big] + \frac{1}{2} \big\langle \mathbf{D}, \mathbf{Z} \big\rangle \\[2mm]
	&\stackrel{\text{(a)}}{=}\, \frac{1}{2} \Big[ 2 \big\langle \mathbf{W}, \mathbb{I} \big\rangle - \big\langle \bm{1} \, \text{diag}(\mathbf{W})^\top+\text{diag}(\mathbf{W}) \, \bm{1}^\top, \mathbf{Z} \big\rangle \Big] + \frac{1}{2} \big\langle \mathbf{D}, \mathbf{Z} \big\rangle \\[2mm]
	&=\, \frac{1}{2} \Big[ 2 \big\langle \mathbf{W}, \mathbb{I} \big\rangle - \big\langle \bm{1} \, \text{diag}(\mathbf{W})^\top, \mathbf{Z} \big\rangle - \big\langle \text{diag}(\mathbf{W}) \, \bm{1}^\top, \mathbf{Z} \big\rangle \Big] + \frac{1}{2} \big\langle \mathbf{D}, \mathbf{Z} \big\rangle \\[2mm]
	&=\, \frac{1}{2} \Big[ 2\text{Tr} \big( \mathbf{W} \big) - \text{Tr} \big( \mathbf{Z} \, \bm{1} \, \text{diag}(\mathbf{W})^\top \big) - \text{Tr} \big( \text{diag}(\mathbf{W}) \, \bm{1}^\top \, \mathbf{Z} \big) \Big] + \frac{1}{2} \big\langle \mathbf{D}, \mathbf{Z} \big\rangle \\[2mm]
	&\stackrel{\text{(b)}}{=}\, \frac{1}{2} \Big[ 2\text{Tr} \big( \mathbf{W} \big) - \text{Tr} \big( \bm{1} \, \text{diag}(\mathbf{W})^\top \big) - \text{Tr} \big( \text{diag}(\mathbf{W}) \, \bm{1}^\top \big) \Big] + \frac{1}{2} \big\langle \mathbf{D}, \mathbf{Z} \big\rangle \\[2mm]
	&=\, \frac{1}{2} \Big[ 2 \, \big( \bm{1}^\top \text{diag}(\mathbf{W}) \big) - \bm{1}^\top \text{diag}(\mathbf{W}) - \bm{1}^\top \text{diag}(\mathbf{W}) \Big] + \frac{1}{2} \big\langle \mathbf{D}, \mathbf{Z} \big\rangle \\[2mm]
	&=\, \frac{1}{2} \big\langle \mathbf{D}, \mathbf{Z} \big\rangle. 
\end{aligned}
\end{equation}
Here, (a) follows from the observation that the $ij$-th element of the matrix $2\mathbf{W}+\mathbf{D}$ can be written as $2 \, \bm{\xi}_i^\top\bm{\xi}_j + \Vert \bm{\xi}_i-\bm{\xi}_j \Vert^2 = \Vert \bm{\xi}_i \Vert^2 + \Vert \bm{\xi}_j \Vert^2$, and (b) uses the insights that $\mathbf{Z}\bm{1} = \bm{1}$ and $\bm{1}^\top \mathbf{Z} = \bm{1}^\top$. Comparing \ref{opt:sdp_pw1} and \ref{opt:sdp_awasthi}, identity \eqref{eq:equiv_gram_dist} shows that the two relaxations are equivalent because their objective functions are the same (up to a factor two) while they share the same feasible set.
	\qed
\end{proof}

Next, we establish that $\rsdp$ is at least as tight a relaxation of the cardinality-constrained $K$-means clustering problem~\eqref{opt:kmean_size} as the stronger relaxation \ref{opt:sdp_pw1} of \cite{Peng2007}.
\begin{proposition}
\label{prop:rsdp_vs_pw1}
	We have $\min \mathcal{R}_{\rm{SDP}} \geq \min$ \emph{\ref{opt:sdp_pw1}}.
\end{proposition}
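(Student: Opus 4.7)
The plan is to exhibit, for every feasible solution $\{(\bm{x}^k, \mathbf{M}^k)\}_{k=1}^K$ of $\rsdp$, a matrix $\mathbf{Z}$ feasible in \ref{opt:sdp_pw1} whose objective value (after invoking the equivalence established in Observation~\ref{obs:equi_pw_aw}) coincides with that of $\rsdp$ at the given solution. This immediately yields $\min \mathcal{PW}_1 \leq \min \rsdp$. The candidate is motivated by the correspondence $\pi_i^k = (x_i^k+1)/2$ used in Section~\ref{sec:relax}: in the binary case one naturally obtains $\sum_k \tfrac{1}{n_k}\bm{\pi}^k(\bm{\pi}^k)^\top$, and its lifted analogue reads
\begin{equation*}
\mathbf{Z} \;=\; \sum_{k=1}^K \frac{1}{4 n_k}\Bigl(\mathbf{M}^k + \bm{1}\bm{1}^\top + \bm{x}^k \bm{1}^\top + \bm{1}(\bm{x}^k)^\top\Bigr).
\end{equation*}

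First, I would compute $\tfrac{1}{2}\langle \mathbf{D}, \mathbf{Z}\rangle$ and observe that it agrees term-by-term with the objective of $\rsdp$, so that, via Observation~\ref{obs:equi_pw_aw}, the objective value of $\mathbf{Z}$ in \ref{opt:sdp_pw1} matches that of $\{(\bm{x}^k,\mathbf{M}^k)\}_{k=1}^K$ in $\rsdp$.

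Next, I would check each constraint of \ref{opt:sdp_pw1}. The element-wise non-negativity $\mathbf{Z}\geq \bm 0$ follows directly from the first inequality in $\mathcal{C}_\text{SDP}(n_k)$, namely $\mathbf{M}^k + \bm{1}\bm{1}^\top + \bm{x}^k\bm{1}^\top + \bm{1}(\bm{x}^k)^\top \geq \bm 0$. For positive semidefiniteness, the Schur-type constraint $\mathbf{M}^k \succeq \bm{x}^k (\bm{x}^k)^\top$ gives
\begin{equation*}
\mathbf{M}^k + \bm{1}\bm{1}^\top + \bm{x}^k\bm{1}^\top + \bm{1}(\bm{x}^k)^\top \;\succeq\; (\bm{x}^k+\bm{1})(\bm{x}^k+\bm{1})^\top \;\succeq\; \bm 0,
\end{equation*}
so $\mathbf{Z}\succeq \bm 0$ as a sum of PSD matrices. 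For the row-sum condition, using $\mathbf{M}^k \bm 1 = (2n_k-N)\bm{x}^k$ and $\bm{1}^\top \bm{x}^k = 2n_k-N$, a short calculation yields $(\mathbf{M}^k+\bm{1}\bm{1}^\top+\bm{x}^k\bm{1}^\top+\bm{1}(\bm{x}^k)^\top)\bm{1} = 2n_k(\bm{x}^k+\bm{1})$, and summing $\frac{1}{4n_k}$ of this over $k$ together with $\sum_k \bm{x}^k = (2-K)\bm 1$ gives $\mathbf{Z}\bm{1}=\bm{1}$. Finally, $\text{diag}(\mathbf{M}^k)=\bm 1$ and $\bm 1^\top \bm x^k=2n_k-N$ imply $\text{Tr}(\mathbf{M}^k+\bm{1}\bm{1}^\top+\bm{x}^k\bm{1}^\top+\bm{1}(\bm{x}^k)^\top)=4n_k$, whence $\text{Tr}(\mathbf{Z})=K$.

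Conceptually, the construction is essentially forced by the $\pi^k_i = (x^k_i+1)/2$ correspondence, so the main obstacle is bookkeeping rather than any genuine insight: every defining constraint of $\mathcal{C}_\text{SDP}(n_k)$ is used exactly once in the verification above, which explains why the analogous claim fails when $\rsdp$ is weakened to $\rlp$ (there, the PSD constraint $\mathbf{M}^k\succeq \bm{x}^k(\bm{x}^k)^\top$ is absent, so $\mathbf{Z}\succeq \bm 0$ can no longer be deduced). The proof is therefore short, and I expect no significant technical difficulty.
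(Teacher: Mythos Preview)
Your proposal is correct and follows essentially the same approach as the paper: the candidate matrix $\mathbf{Z}$ you construct is identical to the paper's $\overline{\mathbf{Z}}$, and the verification of each constraint of \ref{opt:sdp_pw1} proceeds along the same lines (your PSD step via the factorization $(\bm{x}^k+\bm{1})(\bm{x}^k+\bm{1})^\top$ is in fact slightly cleaner than the paper's expanded quadratic-form argument). One minor inaccuracy in your closing commentary: not \emph{every} defining constraint of $\mathcal{C}_{\text{SDP}}(n_k)$ is used---the last three RLT inequalities play no role here---but this does not affect the proof itself.
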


Note that, through Observation~\ref{obs:equi_pw_aw}, Proposition~\ref{prop:rsdp_vs_pw1} also implies that $\rsdp$ is at least as tight as the relaxation \ref{opt:sdp_awasthi} of \cite{Awasthi2015}.

\begin{proof}{Proof of Proposition~\ref{prop:rsdp_vs_pw1}}
	To prove that $\rsdp$ is at least as tight a relaxation as \ref{opt:sdp_pw1}, we will argue that for every feasible solution $\{ (\bm{x}^k, \mathbf{M}^k) \}_{k=1}^K$ of $\rsdp$ one can construct a solution 
\begin{equation*}
\overline{\mathbf{Z}} = \frac{1}{4} \sum_{k=1}^{K} \frac{1}{n_k} \big( \mathbf{M}^k + \bm{1}\bm{1}^\top + \bm{x}^k \bm{1}^\top + \bm{1}(\bm{x}^k)^\top\big)
\end{equation*}
which is feasible in \ref{opt:sdp_pw1} and achieves the same objective value. We first verify the feasibility of the proposed solution $\overline{\mathbf{Z}}$. Note that $\overline{\mathbf{Z}}$ is symmetric by construction. Next, we can directly verify that $\overline{\mathbf{Z}}$ is positive semidefinite since
\begin{equation*}
\begin{aligned}
\overline{\mathbf{Z}} \succeq 0 \quad &\Longleftarrow \quad \mathbf{M}^k + \bm{1}\bm{1}^\top + \bm{x}^k \bm{1}^\top + \bm{1}(\bm{x}^k)^\top \succeq \bm 0 \quad && &&\forall k=1,\ldots,K \\
	&\Longleftrightarrow \quad \hspace{-0.2mm} \bm{v}^\top \big( \mathbf{M}^k + \bm{1}\bm{1}^\top + \bm{x}^k \bm{1}^\top + \bm{1}(\bm{x}^k)^\top \big) \, \bm{v} \geq 0 \quad &&\forall \bm{v} \in \mathbb{R}^N \quad &&\forall k=1,\ldots,K \\
	&\Longleftarrow \quad \hspace{0.5mm}\bm{v}^\top \big( \bm{x}^k(\bm{x}^k)^\top + \bm{1}\bm{1}^\top + \bm{x}^k \bm{1}^\top + \bm{1}(\bm{x}^k)^\top \big) \, \bm{v} \geq 0 \quad &&\forall \bm{v} \in \mathbb{R}^N \quad &&\forall k=1,\ldots,K \\
	&\Longleftrightarrow \quad  \hspace{-0.2mm} \big( \bm{v}^\top \bm{x}^k \big)^2 + \big( \bm{v}^\top \bm{1} \big)^2 + 2 \, \big( \bm{v}^\top \bm{x}^k \big) \big( \bm{v}^\top \bm{1} \big) \geq 0 \quad &&\forall \bm{v} \in \mathbb{R}^N \quad &&\forall k=1,\ldots,K \\
	&\Longleftrightarrow \quad  \hspace{-0.2mm} \big( \bm{v}^\top \bm{x}^k +  \bm{v}^\top \bm{1} \big)^2 \geq 0 \quad &&\forall \bm{v} \in \mathbb{R}^N \quad &&\forall k=1,\ldots,K,
\end{aligned}
\end{equation*}
where the third implication is due to the definition of $\mathcal{C}^{}_{\text{SDP}} (n_k)$, which requires that $\mathbf{M}^k \succeq \bm{x}^k(\bm{x}^k)^\top$. The last statement holds trivially because any quadratic form is non-negative. Next, we can ensure the element-wise non-negativity of $\overline{\mathbf{Z}}$, again through the definition of $\mathcal{C}^{}_{\text{SDP}} (n_k)$:
\begin{equation*}
\begin{aligned}
\overline{\mathbf{Z}} \geq \bm 0 \quad &\Longleftarrow \quad \mathbf{M}^k + \bm{1}\bm{1}^\top + \bm{x}^k \bm{1}^\top + \bm{1}(\bm{x}^k)^\top \geq \bm 0 \quad &&\forall k=1,\ldots,K.
\end{aligned}
\end{equation*}
Furthermore, combining the definition of $\mathcal{C}^{}_{\text{SDP}} (n_k)$ and the constraint $\sum_{k=1}^K \bm{x}^k=(2-K)\bm{1}$ of $\rsdp$, we can see that each row of $\overline{\mathbf{Z}}$ indeed sums up to one:
\begin{equation*}
\begin{aligned}
\overline{\mathbf{Z}} \, \bm{1} \,&=\, \frac{1}{4} \sum_{k=1}^{K} \frac{1}{n_k} (\mathbf{M}^k \bm{1} + \bm{1}\bm{1}^\top \bm{1} + \bm{x}^k \bm{1}^\top \bm{1} + \bm{1}(\bm{x}^k)^\top \bm{1}) \\
	&\,= \frac{1}{4} \sum_{k=1}^{K} \frac{1}{n_k} ((2n_k-N)\bm{x}^k + N \bm{1} + N\bm{x}^k + (2n_k-N)\bm{1}) \\
	&\,= \frac{1}{2} \sum_{k=1}^{K} (\bm{x}^k + \bm{1}) = \bm{1}.
\end{aligned}
\end{equation*}
Finally, the trace of $\overline{\mathbf{Z}}$ is uniquely determined as follows:
\begin{equation*}
\begin{aligned}
\text{Tr}(\overline{\mathbf{Z}}) \, &= \, \frac{1}{4} \sum_{k=1}^{K} \frac{1}{n_k} \text{Tr}\big(\mathbf{M}^k + \bm{1}\bm{1}^\top + \bm{x}^k \bm{1}^\top + \bm{1}(\bm{x}^k)^\top \big) \\
	&=\, \frac{1}{4} \sum_{k=1}^{K} \frac{1}{n_k} \big(2N + 2 (\bm{1}^\top\bm{x}^k) \big) \\
	&=\, \frac{1}{4} \sum_{k=1}^{K} \frac{1}{n_k} \big(2N + 2 (2n_k-N)\big) = K.
\end{aligned}
\end{equation*}
Thus, $\overline{\mathbf{Z}}$ is feasible in \ref{opt:sdp_pw1}, and it remains to prove that it achieves the same objective value as the original solution $\{ (\bm{x}^k, \mathbf{M}^k) \}_{k=1}^K$ in $\rsdp$. Invoking relation \eqref{eq:equiv_gram_dist}, it is easy to see that 
\begin{equation*}
\begin{aligned}
\big\langle \mathbf{W}, \mathbb{I}-\overline{\mathbf{Z}} \big\rangle \,&=\, \frac{1}{2} \big\langle \mathbf{D}, \overline{\mathbf{Z}} \big\rangle \,=\, \frac{1}{8} \bigg\langle \mathbf{D}, \sum_{k=1}^{K} \frac{1}{n_k} (\mathbf{M}^k + \bm{1}\bm{1}^\top + \bm{x}^k \bm{1}^\top + \bm{1}(\bm{x}^k)^\top) \bigg\rangle.
\end{aligned}
\end{equation*}
The proof thus concludes. \qed
\end{proof}

Finally, we assert that $\rsdp^{\rm{b}}$ is at least as tight a relaxation of the balanced $K$-means clustering problem as the corresponding relaxation \ref{opt:sdp_pw3} of \cite{Peng2007}.

\begin{proposition}
\label{prop:rsdp_vs_pw3}
	We have $\min \mathcal{R}_{\rm{SDP}}^{\rm{b}} \geq \min$ \emph{\ref{opt:sdp_pw3}}.
\end{proposition}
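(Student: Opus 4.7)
The plan is to mirror the structure of the proof of Proposition~\ref{prop:rsdp_vs_pw1}, that is, to start from an arbitrary feasible solution $\{(\bm x^1, \mathbf M^1), (\bm x, \mathbf M)\}$ of \ref{opt:kmean_sdp_balanced} and construct an explicit matrix that is feasible in \ref{opt:sdp_pw3} and attains the same objective value. The natural candidate, suggested directly by the form of the objective of~\ref{opt:kmean_sdp_balanced}, is
\begin{equation*}
\overline{\mathbf Z} \;=\; \frac{1}{4n}\Big[\,\mathbf M^1 + \bm 1\bm 1^\top + \bm x^1\bm 1^\top + \bm 1(\bm x^1)^\top \;+\; (K-1)\big(\mathbf M + \bm 1\bm 1^\top + \bm x\bm 1^\top + \bm 1\bm x^\top\big)\Big].
\end{equation*}
Objective equivalence is then immediate from identity~\eqref{eq:equiv_gram_dist}, since $\langle \mathbf W,\mathbb I - \overline{\mathbf Z}\rangle = \tfrac{1}{2}\langle \mathbf D, \overline{\mathbf Z}\rangle$ coincides by construction with the objective value of~\ref{opt:kmean_sdp_balanced} at the chosen feasible point.

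For feasibility I would check the constraints of \ref{opt:sdp_pw3} one by one. Symmetry of $\overline{\mathbf Z}$ is clear. Positive semidefiniteness and elementwise non-negativity follow by applying the term-by-term argument of Proposition~\ref{prop:rsdp_vs_pw1} separately to the two summands, invoking $\mathbf M^k \succeq \bm x^k(\bm x^k)^\top$ and the two lower-bound inequalities in the definition of $\mathcal{C}^{}_{\text{SDP}}(n)$. The row-sum identity $\overline{\mathbf Z}\bm 1 = \bm 1$ reduces, through a short computation using $\mathbf M^k\bm 1 = (2n-N)\bm x^k$ and $\bm 1^\top \bm x^k = 2n-N$, to $\tfrac{1}{2}\big[K\bm 1 + \bm x^1 + (K-1)\bm x\big] = \bm 1$, which holds by virtue of the coupling constraint $\bm x^1 + (K-1)\bm x = (2-K)\bm 1$. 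The trace condition $\text{Tr}(\overline{\mathbf Z}) = K$ follows analogously from $\text{diag}(\mathbf M^k)=\bm 1$ and $\bm 1^\top \bm x^k = 2n - N$.

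The one genuinely new difficulty, relative to Proposition~\ref{prop:rsdp_vs_pw1}, is the entrywise upper bound $\overline{\mathbf Z} \leq (K/N)\bm 1\bm 1^\top$, or equivalently $\overline Z_{ij} \leq 1/n$ (using $N = Kn$). My plan is to exploit the two upper-bound inequalities $\mathbf M^k - \bm 1\bm 1^\top + \bm x^k\bm 1^\top - \bm 1(\bm x^k)^\top \leq \bm 0$ and $\mathbf M^k - \bm 1\bm 1^\top - \bm x^k\bm 1^\top + \bm 1(\bm x^k)^\top \leq \bm 0$ built into $\mathcal{C}^{}_{\text{SDP}}(n)$, which together imply the pointwise estimate
\begin{equation*}
M^k_{ij} + 1 + x^k_i + x^k_j \;\leq\; 2 + 2\min\{x^k_i, x^k_j\}.
\end{equation*}
Applied to both summands of $4n\,\overline Z_{ij}$ and then simplified via the coupling constraint in the form $x^1_i = (2-K) - (K-1)x_i$, the right-hand side is expected to telescope to $4 - 2(K-1)|x_i - x_j|$, which is bounded above by~$4$. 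Division by $4n$ then yields the required bound~$\overline Z_{ij}\leq 1/n$, completing the feasibility check.

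The main obstacle I anticipate is precisely this upper-bound verification: unlike the lower-bound and row-sum conditions, which decouple across the two summands, the upper bound is tight only if the contributions of $(\bm x^1, \mathbf M^1)$ and $(\bm x, \mathbf M)$ are balanced against each other, so the coupling constraint $\bm x^1 + (K-1)\bm x = (2-K)\bm 1$ must be inserted at exactly the right moment. All other feasibility conditions and the objective identity should be routine once the candidate $\overline{\mathbf Z}$ is written down.
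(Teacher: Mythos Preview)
Your proposal is correct and follows essentially the same route as the paper: the same candidate $\overline{\mathbf Z}$ (note $1/(4n)=K/(4N)$), the same feasibility checks via Proposition~\ref{prop:rsdp_vs_pw1}, and the same use of the coupling constraint for the new upper bound. For that last step the paper takes a slightly shorter path: summing (rather than taking the minimum of) the two upper-bound inequalities in $\mathcal C_{\mathrm{SDP}}(n)$ gives $\mathbf M^k \le \bm 1\bm 1^\top$, whence $4n\,\overline Z_{ij}\le 2K + \big(x^1_i+(K-1)x_i\big) + \big(x^1_j+(K-1)x_j\big)=4$ directly, without needing the $\min$/absolute-value telescoping.
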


\begin{proof}{Proof}
To show that $\rsdp^{\rm{b}}$ is at least as tight a relaxation as \ref{opt:sdp_pw3}, we will again argue that for every feasible solution $\{ (\bm{x}^1, \mathbf{M}^1), (\bm{x}, \mathbf{M})\}$ of $\rsdp^{\rm{b}}$ one can construct a solution
\begin{equation*}
    \overline{\mathbf{Z}} = \frac{K}{4N} \left( (\mathbf{M}^1 + \bm{11}^\top + \bm{x}^1\bm{1}^\top + \bm{1}(\bm{x}^1)^\top) + (K-1) (\mathbf{M} + \bm{11}^\top + \bm{x}\bm{1}^\top + \bm{1}\bm{x}^\top) \right)
\end{equation*}
that is feasible in \ref{opt:sdp_pw3} and achieves the same objective value. Following similar steps as in the proof of Proposition~\ref{prop:rsdp_vs_pw1}, one can verify that $\overline{\mathbf{Z}}$ indeed satisfies $\overline{\mathbf{Z}} \succeq \bm{0}$, $\overline{\mathbf{Z}} \geq \bm{0}$, $\mathbf{\overline{Z}}\bm{1} = \bm{1}$ and $\text{Tr}(\mathbf{\overline{Z}})=K$. In order to see that $\overline{\mathbf{Z}} \leq (K/N)\bm{11}^\top$, note from the definition of $\mathcal{C}_{\rm{SDP}}(n)$ (where $n = N/K$ denotes the shared cardinality of all clusters) that
\begin{equation*}
\begin{aligned}
	&2(\mathbf{M}^1 - \bm{11}^\top) = (\mathbf{M}^1 - \bm{11}^\top + \bm{x}^1 \bm{1}^\top - \bm{1} (\bm{x}^1)^\top) + (\mathbf{M}^1 - \bm{11}^\top - \bm{x}^1 \bm{1}^\top + \bm{1}(\bm{x}^1)^\top) \leq \bm{0} \;\Longrightarrow \; \mathbf{M}^1 \leq \bm{11}^\top, \\
	&2(\mathbf{M} - \bm{11}^\top) = (\mathbf{M} - \bm{11}^\top + \bm{x1}^\top - \bm{1x}^\top) + (\mathbf{M} - \bm{11}^\top - \bm{x1}^\top + \bm{1x}^\top) \leq \bm{0} \;\Longrightarrow \; \mathbf{M} \leq \bm{11}^\top.
\end{aligned}
\end{equation*}
Using this insight and the constraint $\bm{x}^1 + (K-1)\bm{x}=(2-K)\bm{1}$ of $\rsdp^{\rm{b}}$, any arbitrary element $\overline{z}_{ij}$ of $\overline{\mathbf{Z}}$ can be bounded above as desired,
\begin{equation*}
\begin{aligned}
\overline{z}_{ij} &= \frac{K}{4N} \left( (m_{ij}^1 + 1 + x_i^1 + x_j^1) + (K-1)(m_{ij} + 1 + x_i + x_j) \right) \\
	&\leq \frac{K}{4N} \left( 2K + x_i^1 + (K-1) x_i + x_j^1 + (K-1) x_j \right) = \frac{K}{N}.
\end{aligned}
\end{equation*}
Finally, a direct calculation reveals that the objective of $\mathcal{R}^\text{b}_\text{SDP}$ evaluated at $\{ (\bm{x}^1, \mathbf{M}^1), (\bm{x}, \mathbf{M})\}$ coincides with the objective of~\ref{opt:sdp_pw3} evaluated at $\overline{\mathbf{Z}}$, which from \eqref{eq:equiv_gram_dist} is equal to $\frac{1}{2} \langle \mathbf{D}, \overline{\mathbf{Z}} \rangle$. Hence, $\mathcal{R}^\text{b}_\text{SDP}$ is at least as tight a relaxation as~\ref{opt:sdp_pw3}, and the proof concludes. \qed
\end{proof}

}

{\color{black}
Note that while Propositions~\ref{prop:rsdp_vs_pw1} and~\ref{prop:rsdp_vs_pw3} demonstrate that our SDP relaxations $\mathcal{R}_{\rm{SDP}}$ and $ \mathcal{R}_{\rm{SDP}}^{\rm{b}}$ are at least as tight as their respective counterparts by \cite{Peng2007}, similar tightness results cannot be established for our LP relaxations. Indeed, our numerical experiments based on real-world datasets in Section~\ref{sec:numericalexp} show that both $\mathcal{R}_{\rm{LP}}$ and $ \mathcal{R}_{\rm{LP}}^{\rm{b}}$ can be strictly weaker than \ref{opt:sdp_pw1} and \ref{opt:sdp_pw3}, respectively. Furthermore, it is possible to construct artificial datasets on which even \ref{opt:sdp_pw2} outperforms $\mathcal{R}_{\rm{LP}}$ and $ \mathcal{R}_{\rm{LP}}^{\rm{b}}$.
}

\subsection{Perfect Recovery Guarantees}

We now demonstrate that the relaxations of Section~\ref{sec:balanced_clustering} are tight and that Algorithm~\ref{alg:det_rounding_balanced} finds the optimal clustering if the clusters are perfectly separated in the sense of the following assumption.

\paragraph{\textbf{\em{(S)}} Perfect Separation:} \label{assumption:recovery_balanced}
There exists a balanced partition $( J_1, \hdots, J_K )$ of $\{1, \hdots, N\}$ where each cluster $k=1,\hdots,K$ has the same cardinality $\vert J_k \vert = N/K \in \mathbb{N}$, and
\begin{equation*}
	\max_{1 \leq k \leq K} \max_{i,j \in J_k} d_{ij} < \min_{1 \leq k_1 < k_2 \leq K} \min_{i \in J_{k_1}, \, j \in J_{k_2}} d_{ij}.
\end{equation*}

{\color{black}Assumption~{\bf (S)} implies that the dataset admits the natural balanced clustering $( J_1, \hdots, J_K )$, and that the largest cluster diameter (\emph{i.e.}, $\max_{1 \leq k \leq K} \max_{i,j \in J_k} d_{ij}$) is smaller than the smallest distance between any two distinct clusters (\emph{i.e.}, $\min_{1 \leq k_1 < k_2 \leq K} \min_{i \in J_{k_1}, j \in J_{k_2}} d_{ij}$)}.
 
\begin{theorem}
\label{thm:recovery_balanced}
If Assumption {\bf (S)} holds, then the optimal values of~{\em $\rlpb$} and~\ref{opt:kmean_milp} coincide. Moreover, the clustering $( J_1, \hdots, J_K )$ is optimal in~\ref{opt:kmean_milp} and is recovered by Algorithm~\ref{alg:det_rounding_balanced}.
\end{theorem}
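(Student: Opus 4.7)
The plan is to establish a chain $\min\rlpb \geq V^\natural \geq \min\milp \geq \min\rlpb$, where $V^\natural$ denotes the objective value of the natural clustering $(J_1,\ldots,J_K)$ in~\ref{opt:kmean_milp}. The last inequality is Corollary~\ref{cor:kmean_sdp_balanced}, the middle one holds because $(J_1,\ldots,J_K)$ is feasible in~\ref{opt:kmean_milp}, so the crux of the argument is the first inequality. Once it is in place, every optimizer of $\rlpb$ must attain value $V^\natural$, and this will pin down enough of its structure to prove that Algorithm~\ref{alg:det_rounding_balanced} correctly recovers all clusters.

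To obtain the lower bound, I would first switch to McCormick-style coordinates. Setting $\pi^1_i=\tfrac12(x^1_i+1)$, $\pi_i=\tfrac12(x_i+1)$, $\eta^1_{ij}=\tfrac14(m^1_{ij}+x^1_i+x^1_j+1)$ and $\eta_{ij}=\tfrac14(m_{ij}+x_i+x_j+1)$, a direct (if tedious) translation of the constraints of $\rlpb$ yields $\pi^1_1=1$, $\pi^1_i+(K-1)\pi_i=1$, $\eta^1_{ii}=\pi^1_i$, $\eta_{ii}=\pi_i$, $\sum_j\eta^1_{ij}=n\pi^1_i$, $\sum_j\eta_{ij}=n\pi_i$, together with the McCormick envelopes $\max\{0,\pi^1_i+\pi^1_j-1\}\leq\eta^1_{ij}\leq\min\{\pi^1_i,\pi^1_j\}$ and the analogous bounds on $\eta_{ij}$; symmetry of $\mathbf{M}^1$ and $\mathbf{M}$ makes $\eta^1$ and $\eta$ symmetric. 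The objective collapses to $\tfrac{1}{2n}\sum_{i,j}d_{ij}\bar\eta_{ij}$ with aggregate $\bar\eta_{ij}=\eta^1_{ij}+(K-1)\eta_{ij}$, which is symmetric and satisfies $\bar\eta_{ii}=1$, $\sum_j\bar\eta_{ij}=n$, and, combining the McCormick upper bounds with the linking constraint, also $\bar\eta_{ij}\in[0,1]$.

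With $\bar\eta$ in hand, a row-wise exchange argument delivers the bound. Denote by $k(i)$ the natural cluster of point $i$, abbreviate $D^{\rm intra}_{\max}=\max_{k,\,i,j\in J_k}d_{ij}$ and $D^{\rm inter}_{\min}=\min_{k\neq\ell,\,i\in J_k,\,j\in J_\ell}d_{ij}$, and set $\beta_i=\sum_{j\notin J_{k(i)}}\bar\eta_{ij}=n-1-\sum_{j\in J_{k(i)}\setminus\{i\}}\bar\eta_{ij}\geq 0$. Since $\bar\eta_{ij}\in[0,1]$, bounding inter-cluster distances from below and the intra-cluster slack $\sum_{j\in J_{k(i)}\setminus\{i\}}(1-\bar\eta_{ij})=\beta_i$ against the intra-cluster diameter gives
\begin{equation*}
\sum_j d_{ij}\bar\eta_{ij}-\sum_{j\in J_{k(i)}\setminus\{i\}}d_{ij} \;\geq\; \beta_i\,D^{\rm inter}_{\min}-\sum_{j\in J_{k(i)}\setminus\{i\}}d_{ij}(1-\bar\eta_{ij}) \;\geq\; \beta_i\bigl(D^{\rm inter}_{\min}-D^{\rm intra}_{\max}\bigr) \;\geq\; 0,
\end{equation*}
the last step being Assumption~{\bf(S)}. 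Summing over $i$ and dividing by $2n$ yields $\tfrac{1}{2n}\sum_{i,j}d_{ij}\bar\eta_{ij}\geq V^\natural$, which completes the lower bound.

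To finish, note that equality in the above chain forces $\beta_i=0$ for every $i$, so $\bar\eta_{ij}=1$ whenever $k(i)=k(j)$ and $\bar\eta_{ij}=0$ otherwise at any optimizer of $\rlpb$. The symmetry-breaking constraint $x^1_1=1$ gives $\pi^1_1=1$ and $\pi_1=0$; then $\eta_{1j}\leq\min\{\pi_1,\pi_j\}=0$ forces $\eta_{1j}=0$, so $\eta^1_{1j}=\bar\eta_{1j}$ equals $1$ for $j\in J_{k(1)}$ and $0$ otherwise. Combined with $\eta^1_{1j}\leq\pi^1_j$ and the count $\sum_i\pi^1_i=n$, this pins down $\pi^1_j=1$ on $J_{k(1)}$ and $\pi^1_j=0$ elsewhere, i.e.\ $x^1_j=+1$ on $J_{k(1)}$ and $x^1_j=-1$ outside. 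Hence Step~4 of Algorithm~\ref{alg:det_rounding_balanced} extracts exactly $J_{k(1)}$ at the first iteration; since the residual dataset still satisfies Assumption~{\bf(S)} with $K-1$ balanced clusters, induction on $K$ completes the recovery argument. I expect the main obstacle to be bookkeeping: verifying that symmetry, the diagonal identity $\bar\eta_{ii}=1$, the row-sum identity $\sum_j\bar\eta_{ij}=n$, and the $[0,1]$ box bound on $\bar\eta$ all survive the change of coordinates in a form clean enough to drive the exchange inequality.
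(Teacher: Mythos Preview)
Your proof is correct and follows essentially the same route as the paper: after the affine change of variables your aggregate $\bar\eta_{ij}$ is exactly $\tfrac14\bigl(h_{ij}+(K-1)w_{ij}\bigr)$ in the paper's notation, so your $[0,1]$ box bound and the paper's entrywise bound $h_{ij}+(K-1)w_{ij}\le 4$ coincide, and your row-sum identity $\sum_j\bar\eta_{ij}=n$ refines the paper's global total $\sum_{i\neq j}\bigl(h_{ij}+(K-1)w_{ij}\bigr)=4Kn(n-1)$. The only stylistic difference is that the paper lower-bounds the objective globally by the sum of the $Kn(n-1)$ smallest off-diagonal $d_{ij}$'s, whereas you run a row-by-row exchange; both give the same bound under Assumption~{\bf(S)}, and Step~2 is the same counting argument carried out in the $\pi,\eta$ coordinates rather than in $\bm{x},\mathbf{M}$.
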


{\color{black}Put simply, Theorem \ref{thm:recovery_balanced} states that for datasets whose hidden classes are balanced and well separated, Algorithm 2 will succeed in recovering this hidden, provably optimal clustering.}

\begin{proof}{Proof of Theorem \ref{thm:recovery_balanced}}
	Throughout the proof we assume without loss of generality that the clustering $( J_1, \hdots, J_K )$ from Assumption~{\bf (S)} satisfies $1\in J_1$, that is, the cluster containing the datapoint $\bm \xi_1$ is assigned the number $k=1$. The proof now proceeds in two steps. In the first step, we show that the optimal values of the LP~$\rlpb$ and the MILP~\ref{opt:kmean_milp} are equal and that they both coincide with the sum of squared intra-cluster distances of the clustering $( J_1, \hdots, J_K )$, which amounts to
\begin{equation*}
	\frac{1}{2n} \sum_{k=1}^K \sum_{i,j \in J_k} d_{ij}.
\end{equation*}
In the second step we demonstrate that the output $(I_1,\hdots,I_K)$ of Algorithm~\ref{alg:det_rounding_balanced} coincides with the optimal clustering $( J_1, \hdots, J_K )$ from Assumption~{\bf (S)}. As the algorithm uses the same procedure $K$ times to recover the clusters one by one, it is actually sufficient to show that the first iteration of the algorithm correctly identifies the first cluster, that is, it suffices to prove that $I_1 = J_1$.

\paragraph{Step 1:} For any feasible solution $(\bm{x}^1, \bm{x}, \mathbf{M}^1, \mathbf{M})$ of~$\rlpb$, we define $\mathbf{H},\mathbf{W}\in\mathbb{S}^N$ through
\begin{equation}
\label{eq:weight_matrices} 
	\mathbf{H} = \mathbf{M}^1 + \bm{11}^\top + \bm{x}^1\bm{1}^\top + \bm{1}(\bm{x}^1)^\top \quad \text{and} \quad
	\mathbf{W} = \mathbf{M} + \bm{11}^\top + \bm{x1}^\top + \bm{1x}^\top.
\end{equation} 
From the definition of $\mathcal{C}^{}_{\text{LP}} (n)$ it is clear that $\mathbf{H}, \mathbf{W} \geq \bm 0$. Moreover, we also have that
\begin{equation*}
\begin{aligned}
	\sum_{i \neq j} h_{ij} &= \sum_{i \neq j} m_{ij}^1 + N(N-1) + 2(N-1)(\bm{x}^1)^\top\bm{1} \\
	&= (2n-N)^2 -N + N(N-1) + 2(N-1)(2n-N) = 4n(n-1).
\end{aligned}
\end{equation*}
A similar calculation for $\mathbf{W}$ reveals that $\sum_{i \neq j} w_{ij} = 4n(n-1)$. Next, we consider the objective function of~$\rlpb$, which can be rewritten in terms of $\mathbf{W}$ and $\mathbf{H}$ as
\begin{equation}
\label{eq:kmean_sdp_balanced_objective}
	\frac{1}{8n}\left< \mathbf{D}, \mathbf{H} + (K-1)\mathbf{W} \right> = \frac{1}{8n} \sum_{i \neq j} d_{ij} (h_{ij} + (K-1)w_{ij}).
\end{equation}
The sum on the right-hand side can be viewed as a weighted average of the squared distances $d_{ij}$ with non-negative weights $h_{ij} + (K-1)w_{ij}$, where the total weight is given by
\begin{equation*}
	\sum_{i \neq j} \left( h_{ij} + (K-1)w_{ij} \right) = 4Kn(n-1).
\end{equation*}
Furthermore each weight $h_{ij} + (K-1)w_{ij}$ is bounded above by 4 because
\begin{equation}
\label{eq:kmean_sdp_weight_bound}
\begin{aligned}
	h_{ij} + (K-1)w_{ij} &= (m^1_{ij} + 1 + x^1_i + x^1_j) + (K-1)(m_{ij} + 1 + x_i + x_j) \\
	&\leq 2K + (x^1_i + (K-1)x_i) + (x^1_j + (K-1)x_j ) = 4,
\end{aligned}
\end{equation}
where the inequality holds because $\mathbf{M}^1,\mathbf{M} \leq \bm{11}^\top$ (which we know from the proof of Proposition~\ref{prop:rsdp_vs_pw3}) and the last equality follows from the constraint $\bm{x}^1 + (K-1)\bm{x} = (2-K)\bm1$ in~$\rlpb$. 

Hence, the sum on the right hand side of~\eqref{eq:kmean_sdp_balanced_objective} assigns each squared distance $d_{ij}$ with $i\neq j$ a weight of at most $4$, while the total weight equals $4Kn(n-1)$. A lower bound on the sum is thus obtained by assigning a weight of 4 to the $Kn(n-1)$ smallest values $d_{ij}$ with $i\neq j$. Thus, we have
\begin{equation}
\label{eq:kmean_sdp_balanced_bound}
\begin{aligned}
	\frac{1}{8n}\left< \mathbf{D}, \mathbf{H} + (K-1)\mathbf{W} \right> 
	&\geq 
	\frac{1}{2n} \left\{ \text{sum of the $Kn(n-1)$ smallest entries of $d_{ij}$ with $i \neq j$} \right\} \\
	&=\frac{1}{2n} \sum_{k=1}^K \sum_{i,j \in J_k} d_{ij},
\end{aligned}
\end{equation}
where the last equality follows from Assumption {\bf (S)}. By Lemma~\ref{lem:distance}, the right-hand side of~\eqref{eq:kmean_sdp_balanced_bound} represents the objective value of the clustering $( J_1, \hdots, J_K )$ in the MILP~\ref{opt:kmean_milp}. Thus, $\rlpb$ provides an upper bound on~\ref{opt:kmean_milp}. By Corollary~\ref{cor:kmean_sdp_balanced}, $\rlpb$ also provides a lower bound on~\ref{opt:kmean_milp}. We may thus conclude that the LP relaxation~$\rlpb$ is tight and, {\color{black}as a consequence}, that the clustering $( J_1, \hdots, J_K )$ is indeed optimal in~\ref{opt:kmean_milp}. 

\paragraph{Step 2:} As the inequality in~\eqref{eq:kmean_sdp_balanced_bound} is tight, any optimal solution to~$\rlpb$ satisfies $h_{ij} + (K-1)w_{ij} = 4$ whenever $i\neq j$ and $i,j \in J_k$ for some $k =1, \hdots, K$ ({\em i.e.}, whenever the datapoints $\bm\xi_i$ and $\bm\xi_j$ belong to the same cluster). We will use this insight to show that Algorithm~\ref{alg:det_rounding_balanced} outputs~$I_1 = J_1$. 

For any $i \in J_1$, the above reasoning and our convention that $1\in J_1$ imply that $h_{1i} + (K-1)w_{1i} = 4$. This in turn implies via~\eqref{eq:kmean_sdp_weight_bound} that $m_{1i}^1 = m_{1i} = 1$ for all $i \in J_1$. 

From the definition of $\mathcal{C}^{}_{\text{LP}} (n)$, we know that
\begin{equation*}
\begin{aligned}
	&2(\mathbf{M}^1 + \bm{11}^\top) = (\mathbf{M}^1 + \bm{11}^\top + \bm{x}^1\bm{1}^\top + \bm{1}(\bm{x}^1)^\top) + (\mathbf{M}^1 + \bm{11}^\top - \bm{x}^1\bm{1}^\top - \bm{1}(\bm{x}^1)^\top) \geq \bm 0 \; \Longrightarrow \; \mathbf{M}^1 \geq -\bm{11}^\top.
\end{aligned}
\end{equation*}
This allows us to conclude that
\begin{equation*}
	2n-N = \sum_{i=1}^N m_{1i}^1 = \sum_{i \in J_1} m_{1i}^1 + \sum_{i \notin J_1} m_{1i}^1 \geq n + (N-n)(-1) = 2n-N,
\end{equation*}
where the first equality holds because $\mathbf{M}^1\bm{1} = (2n-N)\bm{x}^1$, which is one of the constraints in $\rlpb$, and because of our convention that $x_1^1 = 1$. Hence, the above inequality must be satisfied as an equality, which in turn implies that $m_{1i}^1 = -1$ for all $i \notin J_1$. 

For any $i\notin J_1$, the $1i$-th entry of the matrix inequality $\mathbf{M}^1 + \bm{1}\bm{1}^\top - \bm{x}^1\bm{1}^\top - \bm{1}(\bm{x}^1)^\top \geq \bm{0}$ from the definition of~$\mathcal{C}^{}_{\text{LP}} (n)$ can be expressed as
\begin{equation*}
\begin{aligned}
	&0 \leq m_{1i}^1 + 1 - x_1^1 - x_i^1 \quad \forall i = 1, \hdots, N \; \Longrightarrow \; x_i^1 \leq -1,
\end{aligned}
\end{equation*}
where the implication holds because $m^1_{1i}=-1$ for $i\notin J_1$ and because $x_1^1=1$ due to the symmetry breaking constraint in $\rlpb$. Similarly, for any $i\in J_1$, the $ii$-th entry of the matrix inequality $\mathbf{M}^1 + \bm{1}\bm{1}^\top - \bm{x}^1\bm{1}^\top - \bm{1}(\bm{x}^1)^\top \geq \bm{0}$ can be rewritten as
\begin{equation*}
\begin{aligned}
	&0 \leq m_{ii}^1 + 1 - 2x_i^1 \quad \forall i = 1, \hdots, N \; \Longrightarrow \; x_i^1 \leq 1,
\end{aligned}
\end{equation*}
where the implication follows from the constraint $\text{diag}(\mathbf{M}^1) = \bm{1}$ in $\rlpb$.

As $x_i^1\leq 1$ for all $i\in J_1$ and $x_i^1\leq -1$ for all $i\notin J_1$, the equality constraint $\bm{1}^\top\bm{x}^1 = 2n-N$ from the definition of $\mathcal{C}^{}_{\text{LP}}(n)$ can only be satisfied if $x_i^1 = 1$ for all $i \in J_1$ and $x_i^1 = -1$ for all $i \notin J_1$. Since Algorithm~\ref{alg:det_rounding_balanced} constructs $I_1$ as the index set of the $n$ largest entries of the vector $\bm x^1$, we conclude that it must output $I_1 = J_1$  and the proof completes. \qed
\end{proof}

Theorem~\ref{thm:recovery_balanced} implies via 
Corollary~\ref{cor:kmean_sdp_balanced} that the optimal values of~\ref{opt:kmean_sdp_balanced} and~\ref{opt:kmean_milp} are also equal. Thus, both the LP and the SDP relaxation lead to perfect recovery.

In the related literature, Assumption~{\bf (S)} has previously been used by~\cite{Elhamifar2012} to show that the natural clustering can be recovered in the context of unconstrained exemplar-based clustering whenever a regularization parameter is chosen appropriately. In contrast, our formulation does not rely on regularization parameters. Likewise, Theorem~\ref{thm:recovery_balanced} is reminiscent of Theorem~9 by~\cite{Awasthi2015} {\color{black} which formalizes the recovery properties of their LP relaxation for the unconstrained $K$-means clustering problem.} \cite{Awasthi2015} assume, however, that the datapoints are drawn independently from a mixture of $K$ isotropic distributions and provide a probabilistic recovery guarantee that improves with~$N$ and deteriorates with~$d$. In contrast, our recovery guarantee for constrained clustering is deterministic, model-free and dimension-independent. {\color{black} If Assumption~{\bf (S)} holds, simpler algorithms than Algorithm~\ref{alg:det_rounding_multi} and~\ref{alg:det_rounding_balanced} can be designed to recover the true clusters. {\color{black} For instance, a simple threshold approach (\emph{i.e.}, assigning datapoints to the same cluster whenever the distance between them falls below a given threshold) would be able to recover the true clusters whenever Assumption {\bf(S)} holds.} It seems unlikely, however, that such approaches would perform well in a setting where Assumption~{\bf (S)} is not satisfied. {\color{black}In fact, \cite{Awasthi2015} show that their LP relaxation fails to recover the true clusters with high probability if Assumption~{\bf (S)} is violated.} In contrast, the numerical experiments of Section~\ref{sec:numericalexp} suggest that Algorithms~\ref{alg:det_rounding_multi} and~\ref{alg:det_rounding_balanced} perform well even if Assumption~{\bf (S)} is violated.

}

\begin{remark}
To our best knowledge, there is no perfect recovery result for the cardinality-constrained $K$-means clustering algorithm by \cite{Bennett2000}, see Appendix, whose performance depends critically on its initialization. To see that it can be trapped in a local optimum, consider the $N = 4$ two-dimensional datapoints $\bm\xi_1 = (0,0)$, $\bm\xi_2 = (a,0)$, $\bm\xi_3 = (a,b)$ and $\bm\xi_4 = (0,b)$ with $0< a < b$, and assume that we seek two balanced clusters. If the algorithm is initialized with the clustering $\{\{1,4\}, \{2,3\}\}$, then this clustering remains unchanged, and the algorithm terminates and reports a suboptimal solution with relative optimality gap $b^2/a^2-1$. In contrast, as Assumption~{\bf (S)} holds, Algorithm~\ref{alg:det_rounding_balanced} recovers the optimal clustering $\{ \{ 1,2 \}, \{ 3,4 \} \}$ by Theorem~\ref{thm:recovery_balanced}. 
\end{remark}

\section{Cardinality-Constrained Clustering with Outliers}
If the dataset is corrupted by outliers, then the optimal value of~\eqref{opt:kmean_size} may be high, indicating that the dataset admits no natural clustering. Note that the bounds from Section~\ref{sec:no-outliers} could still be tight, {\em i.e.}, it is thinkable that the optimal clustering is far from `ideal' even if it can be found with Algorithm~\ref{alg:det_rounding_balanced}. If we gradually remove datapoints that are expensive to assign to any cluster, however, we should eventually discover an `ideal' low-cost clustering. In the extreme case, if we omit all but $K$ datapoints, then the optimal value of~\eqref{opt:kmean_size} drops to zero, and Algorithm~\ref{alg:det_rounding_balanced} detects the optimal clustering due to Theorem~\ref{thm:recovery_balanced}.

We now show that the results of Section~\ref{sec:no-outliers} (particularly Proposition~\ref{prop:kmean_milp} and Theorem~\ref{thm:kmean_sdp}) extend to situations where $n_0$ datapoints must be assigned to an auxiliary {\em outlier cluster} indexed by $k=0$ ($\sum_{k=0}^K n_k = N$), and where neither the distances between outliers and retained datapoints nor the distances between different outliers contribute to the objective function. In fact, we could equivalently postulate that each of the $n_0$ outliers forms a trivial singleton cluster. {\color{black}The use of cardinality constraints in integrated clustering and outlier detection has previously been considered by~\cite{Sanjay_outliers} in the context of local search heuristics. Inspired by this work, we henceforth minimize the sum of squared intra-cluster distances of the $N-n_0$ non-outlier datapoints}. We first prove that the joint outlier detection and cardinality-constrained clustering problem admits an exact MILP~reformulation.

\begin{proposition}[MILP Reformulation]
\label{prop:kmean_milp_outlier}
	The joint outlier detection and cardinality-constrained clustering problem is equivalent to the MILP 
\begin{equation}
\label{opt:kmean_milp_outlier} \tag{$\milpo$}
\begin{aligned}
    &\text{~\em{minimize}} && \textstyle \frac{1}{2} \sum_{k=1}^K \frac{1}{n_k} \sum_{i,j=1}^N d_{ij}\eta_{ij}^k \\
    &\text{~\em{subject to}} && \pi_i^k \in \{ 0, 1 \},~ \eta_{ij}^k \in \mathbb{R}_+ && i,j = 1, \hdots, N,~ k = 0, \hdots, K\\
    &	  && \textstyle\sum_{i=1}^N \pi_i^k = n_k && k = 0, \hdots, K \\
    &	  && \textstyle\sum_{k=0}^K \pi_i^k = 1 && i = 1, \hdots, N \\
    &	  && \eta_{ij}^k \geq \pi_i^k + \pi_j^k - 1 && i,j = 1, \hdots, N,~ k = 0, \hdots, K .
\end{aligned}
\end{equation}
\end{proposition}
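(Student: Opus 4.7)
The plan is to essentially replicate the argument used in the proof of Proposition~\ref{prop:kmean_milp}, augmented to accommodate the auxiliary outlier cluster indexed by $k=0$. The key structural observation is that the objective function in~\ref{opt:kmean_milp_outlier} sums only over $k = 1, \ldots, K$, so distances involving datapoints assigned to the outlier cluster are automatically excluded from the cost; the outlier cluster enters only through the partition constraint $\sum_{k=0}^{K} \pi_i^k = 1$, which reserves exactly $n_0$ datapoints for it.

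First, I would argue that at optimality we may substitute $\eta_{ij}^k = \max\{\pi_i^k + \pi_j^k - 1, 0\}$, since $d_{ij} \geq 0$ and the constraints on $\eta_{ij}^k$ are all lower-bound constraints. Because the $\pi_i^k$ are binary, this max equals $\pi_i^k \pi_j^k$, which is $1$ precisely when both $i$ and $j$ are assigned to cluster $k$. Consequently, \ref{opt:kmean_milp_outlier} can be rewritten with objective $\frac{1}{2} \sum_{k=1}^{K} \frac{1}{n_k} \sum_{i,j=1}^{N} d_{ij} \max\{\pi_i^k + \pi_j^k - 1, 0\}$ subject to the cardinality and partition constraints on the $\pi_i^k$.

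Second, I would set up the bijection between feasible solutions. Given any feasible clustering $(I_0, I_1, \ldots, I_K)$ of the joint outlier detection and cardinality-constrained clustering problem (with $|I_k| = n_k$ for all $k = 0, 1, \ldots, K$, disjointness, and $\cup_{k=0}^K I_k = \{1, \ldots, N\}$), define $\pi_i^k = 1$ if $i \in I_k$ and $\pi_i^k = 0$ otherwise. The cardinality constraints $\sum_{i=1}^N \pi_i^k = n_k$ for $k=0,\ldots,K$ and the partition constraints $\sum_{k=0}^K \pi_i^k = 1$ are then immediate. Invoking Lemma~\ref{lem:distance} on each non-outlier cluster $I_k$, $k=1,\ldots,K$, the sum of squared intra-cluster distances over the $N - n_0$ retained datapoints equals $\frac{1}{2} \sum_{k=1}^K \frac{1}{n_k} \sum_{i,j \in I_k} d_{ij}$, which by construction of the $\pi_i^k$ matches the reformulated MILP objective. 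Conversely, any feasible $\pi$ in~\ref{opt:kmean_milp_outlier} induces a feasible clustering via $I_k = \{i : \pi_i^k = 1\}$, $k = 0, 1, \ldots, K$, and the same calculation shows that the objectives coincide.

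There is no substantial obstacle here: the proof is a routine adaptation of Proposition~\ref{prop:kmean_milp}. The only point requiring minor care is to make explicit that the outlier cluster contributes nothing to the objective because the outer sum starts at $k=1$ rather than $k=0$, so that one need not worry about the factor $1/n_0$ or the interpretation of intra-outlier distances; the $n_0$ outliers may equivalently be viewed as singleton clusters whose (vanishing) contribution to the $K$-means objective is absorbed into the constant zero.
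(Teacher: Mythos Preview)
Your proposal is correct and takes essentially the same approach as the paper, which simply states that this is an immediate extension of Proposition~\ref{prop:kmean_milp} to account for the outlier cluster. In fact, you supply more detail than the paper itself, correctly noting that the outer sum in the objective starts at $k=1$ so the outlier cluster contributes nothing to the cost and enters only through the partition constraints.
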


\begin{proof}{Proof}
	This is an immediate extension of Proposition~\ref{prop:kmean_milp} to account for the outlier cluster.
	\qed
\end{proof}

In analogy to Section~\ref{sec:relax}, one can demonstrate that the MILP~\ref{opt:kmean_milp_outlier} admits the SDP relaxation
\begin{equation}
\label{opt:kmean_outlier} \tag{$\rsdpo$}
\begin{aligned}
	&\text{~{minimize}} && \textstyle \frac{1}{8} \left< \mathbf{D}, \sum_{k=1}^K \frac{1}{n_k} \left( \mathbf{M}^k + \bm{1} \bm{1}^\top + \bm x^k\bm{1}^\top + \bm{1} (\bm x^k)^\top \right) \right> \\
    &\text{~{subject to}} && (\bm{x}^k, \mathbf{M}^k) \in \mathcal{C}^{}_{\text{{SDP}}}(n_k) \quad k = 0, \hdots, K \\
    &	&&\textstyle\sum_{k=0}^K \bm{x}^k = ( 1 - K ) \bm{1}.
\end{aligned}
\end{equation}
Moreover,~\ref{opt:kmean_outlier} can be further relaxed to an LP, henceforth denoted by $\rlpo$, by replacing the semidefinite representable set $\mathcal{C}^{}_{\text{SDP}} (n_k)$ in~\ref{opt:kmean_outlier} with the polytope $\mathcal{C}^{}_{\text{LP}} (n_k)$ for all $k=0,\hdots,K$.

\begin{theorem}[SDP and LP Relaxations]
\label{thm:kmean_sdp_outlier}
	We have {\em $\min \rlpo$ $\leq \min$ \ref{opt:kmean_outlier} $\leq \min$ \ref{opt:kmean_milp_outlier}}.
\end{theorem}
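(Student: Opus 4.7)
The proof plan is to mirror the argument used for Theorem~\ref{thm:kmean_sdp}, with the only substantive bookkeeping change coming from the fact that the index set for clusters now ranges over $k=0,\ldots,K$ rather than $k=1,\ldots,K$. The first inequality $\min \rlpo \leq \min \rsdpo$ is immediate, since $\mathcal{C}^{}_{\text{SDP}}(n) \subseteq \mathcal{C}^{}_{\text{LP}}(n)$ for every $n\in\mathbb{N}$, so any feasible solution to~\ref{opt:kmean_outlier} is also feasible in~$\rlpo$ with the same objective value.

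For the second inequality $\min \rsdpo \leq \min \milpo$, I would start from any feasible solution $\{\pi^k_i, \eta^k_{ij}\}$ of~\ref{opt:kmean_milp_outlier} and, in parallel to Section~\ref{sec:relax}, eliminate the $\eta^k_{ij}$ variables (since at optimality $\eta^k_{ij} = \max\{\pi^k_i + \pi^k_j - 1,0\}$, and the $\pi^k_i$ are binary, this equals $\pi^k_i \pi^k_j$). Then I would apply the transformation $x^k_i \leftarrow 2\pi^k_i - 1$ for all $k = 0,\ldots,K$, which converts the objective into the quadratic form appearing in $\rsdpo$ and turns the constraints $\sum_i \pi^k_i = n_k$ and $\sum_k \pi^k_i = 1$ into $\bm{1}^\top \bm{x}^k = 2n_k - N$ and $\sum_{k=0}^K \bm{x}^k = (1-K)\bm{1}$, respectively. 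The shift from $(2-K)$ to $(1-K)$ is the only difference from the outlier-free case and is simply due to the extra outlier cluster index.

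Next, I would lift each $\bm{x}^k$ to the rank-one matrix $\mathbf{M}^k = \bm{x}^k (\bm{x}^k)^\top$ and check membership in $\mathcal{C}^{}_{\text{SDP}}(n_k)$ for every $k = 0,\ldots,K$: the identities $\mathbf{M}^k\bm{1} = (2n_k - N)\bm{x}^k$ and $\text{diag}(\mathbf{M}^k) = \bm{1}$ follow from $\bm{x}^k \in \{-1,+1\}^N$ and $\bm{1}^\top\bm{x}^k = 2n_k-N$, while $\mathbf{M}^k \succeq \bm{x}^k(\bm{x}^k)^\top$ holds with equality. The four RLT-type elementwise inequalities in $\mathcal{C}^{}_{\text{SDP}}(n_k)$ factor as $\pm(\bm{1}\pm\bm{x}^k)(\bm{1}\pm\bm{x}^k)^\top$, which are sign-definite exactly as in the proof of Theorem~\ref{thm:kmean_sdp}. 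Since the objective of $\rsdpo$ involves only the indices $k=1,\ldots,K$ (the outlier block $k=0$ does not appear), and the corresponding objective of~\ref{opt:kmean_milp_outlier} is similarly restricted, the two objective values match for our lifted solution. This establishes that every feasible point of~\ref{opt:kmean_milp_outlier} corresponds to a feasible point of~\ref{opt:kmean_outlier} with identical cost, yielding the desired inequality.

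There is no real obstacle here beyond careful accounting: the subtlety is recognizing that because $\sum_{k=0}^K \pi^k_i = 1$ for each $i$, summing the variable transformation over all $K+1$ cluster indices yields $\sum_{k=0}^K \bm{x}^k = 2\bm{1} - (K+1)\bm{1} = (1-K)\bm{1}$, which matches the equality constraint in~\ref{opt:kmean_outlier}. All remaining steps—objective matching, $\mathcal{C}^{}_{\text{SDP}}(n_k)$ feasibility, and the eventual reduction to $\rlpo$—carry over verbatim from the argument developed in the outlier-free setting.
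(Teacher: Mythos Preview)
Your proposal is correct and matches the paper's approach exactly: the paper itself omits the proof, stating only that it ``generalizes Theorem~\ref{thm:kmean_sdp} to account for the additional outlier cluster'' and ``requires no fundamentally new ideas.'' You have carefully filled in the bookkeeping details (the index range $k=0,\ldots,K$, the shift from $(2-K)$ to $(1-K)$, and the fact that the objective only sums over $k\geq 1$), all of which are handled correctly.
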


\begin{proof}{Proof}
	This result generalizes Theorem~\ref{thm:kmean_sdp} to account for the additional outlier cluster. As it requires no fundamentally new ideas, the proof is omitted for brevity.
	\qed
\end{proof}

The relaxations~\ref{opt:kmean_outlier} and $\rlpo$ not only provide a lower bound on~\ref{opt:kmean_milp_outlier}, but they also give rise to a rounding algorithm that recovers a feasible clustering and thus an upper bound on~\ref{opt:kmean_milp_outlier}; see Algorithm~\ref{alg:det_rounding_outlier}. Note that this procedure calls the outlier-unaware Algorithm~\ref{alg:det_rounding_multi} as a subroutine.

\begin{algorithm}
\caption{Rounding algorithm for joint outlier detection and cardinality-constrained clustering}
\label{alg:det_rounding_outlier}
\begin{algorithmic}[1]
\State \textbf{Input:} $\mathcal{I}_0=\{1,\ldots,N\}$ (data indices), $n_k\in\mathbb N, \ k = 0,\ldots,K$ (cluster sizes).
\State Solve~\ref{opt:kmean_outlier} or $\rlpo$ for the datapoints $\bm\xi_i$, $i \in \mathcal{I}_0$, and record the optimal $\bm x^0\in\mathbb R^N$.
\State Determine a bijection $\rho : \mathcal{I}_0\rightarrow \mathcal{I}_0$ such that $x_{\rho(1)}^0 \geq x_{\rho(2)}^0\geq\cdots\geq x_{\rho(N)}^0$.
\State Set $I_0 \leftarrow \{\rho(1), \ldots, \rho(n_0)\}$ and $\mathcal{I}_1 \leftarrow \mathcal{I}_0 \backslash I_0$.
\State Call Algorithm~\ref{alg:det_rounding_multi} with input $(\mathcal{I}_1, \{n_k\}_{k=1}^K)$ 
to obtain $I_1, \hdots, I_K$.
\State \textbf{Output:} $I_0, \hdots, I_K$.
\end{algorithmic}
\end{algorithm}

If all normal clusters are equally sized, {\em i.e.}, $n_k = n$ for $k = 1, \hdots, K$, then~\ref{opt:kmean_outlier} can be replaced~by
\begin{equation}
\label{opt:kmean_sdp_balanced_outlier} \tag{$\rsdpob$}
\begin{aligned}	
	&~\text{minimize}  && \textstyle \frac{K}{8n} \left< \mathbf{D}, \mathbf{M} + \bm{11}^\top + \bm{x1}^\top + \bm{1x}^\top \right> \\
	&~\text{subject to} &&  (\bm{x}, \mathbf{M}) \in \mathcal{C}^{}_{\text{SDP}}(n), \quad (\bm{x}^0, \mathbf{M}^0) \in \mathcal{C}^{}_{\text{SDP}}(n_0), \quad
	    K\bm{x} + \bm{x}^0 = (1-K)\bm{1},
\end{aligned}
\end{equation}
whose size no longer scales with $K$. Similarly,~$\rlpo$ simplifies to the LP~$\rlpob$ obtained from \ref{opt:kmean_sdp_balanced_outlier} by replacing $\mathcal{C}^{}_{\text{SDP}}(n)$ and $\mathcal{C}^{}_{\text{SDP}}(n_0)$ with $\mathcal{C}^{}_{\text{LP}}(n)$ and $\mathcal{C}^{}_{\text{LP}}(n_0)$, respectively. Note that the cardinality $n_0=N - Kn$ may differ from $n$. 

\begin{corollary}[Relaxations for Balanced Clustering]
\label{cor:kmean_sdp_balanced_outlier}
We have~{\em $\min \rlpob$ $\leq \min$~\ref{opt:kmean_sdp_balanced_outlier} $\leq \min$~\ref{opt:kmean_milp_outlier}}.
\end{corollary}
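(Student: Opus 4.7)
My plan is to mirror the proof of Corollary~\ref{cor:kmean_sdp_balanced}, exploiting permutation symmetry across the $K$ normal clusters, with the important twist that the outlier cluster $k=0$ already plays a distinct role and thus no additional symmetry breaking constraint is required. The trivial inequality $\min \rlpob \leq \min \rsdpob$ follows immediately from the inclusion $\mathcal{C}^{}_{\text{SDP}}(n) \subseteq \mathcal{C}^{}_{\text{LP}}(n)$ (and similarly for $n_0$), so the substance lies in establishing $\min \rsdpob \leq \min$~\ref{opt:kmean_milp_outlier}.

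First, I would invoke Theorem~\ref{thm:kmean_sdp_outlier} to obtain the chain $\min \rsdpo \leq \min$~\ref{opt:kmean_milp_outlier}. Next, I would observe that in the present balanced setting ($n_k = n$ for $k=1,\ldots,K$), both the objective and the linear coupling constraint $\sum_{k=0}^K \bm{x}^k = (1-K)\bm{1}$ of~\ref{opt:kmean_outlier} are invariant under permutations of the normal-cluster indices $\{1,\ldots,K\}$, while the outlier cluster $(\bm{x}^0,\mathbf{M}^0)$ plays a distinguished role. Given any feasible solution $\{(\bm{x}^k,\mathbf{M}^k)\}_{k=0}^K$ of~\ref{opt:kmean_outlier}, I would construct a symmetrized counterpart by setting
\begin{equation*}
    \bm{x} \;=\; \frac{1}{K}\sum_{k=1}^K \bm{x}^k, \qquad \mathbf{M} \;=\; \frac{1}{K}\sum_{k=1}^K \mathbf{M}^k,
\end{equation*}
while leaving $(\bm{x}^0,\mathbf{M}^0)$ unchanged. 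By convexity of $\mathcal{C}^{}_{\text{SDP}}(n)$, we have $(\bm{x},\mathbf{M}) \in \mathcal{C}^{}_{\text{SDP}}(n)$, and averaging the coupling constraint yields $K\bm{x} + \bm{x}^0 = (1-K)\bm{1}$. Hence $\{(\bm{x},\mathbf{M}),(\bm{x}^0,\mathbf{M}^0)\}$ is feasible in~\ref{opt:kmean_sdp_balanced_outlier}.

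A direct calculation using the linearity of the trace inner product shows that the objective of~\ref{opt:kmean_outlier} evaluated at the original solution coincides with the objective of~\ref{opt:kmean_sdp_balanced_outlier} evaluated at the symmetrized solution:
\begin{equation*}
    \frac{1}{8n}\left\langle \mathbf{D}, \sum_{k=1}^{K}\bigl(\mathbf{M}^k + \bm{1}\bm{1}^\top + \bm{x}^k\bm{1}^\top + \bm{1}(\bm{x}^k)^\top\bigr)\right\rangle
    \;=\; \frac{K}{8n}\left\langle \mathbf{D}, \mathbf{M} + \bm{1}\bm{1}^\top + \bm{x}\bm{1}^\top + \bm{1}\bm{x}^\top \right\rangle.
\end{equation*}
Consequently, every feasible point of~\ref{opt:kmean_outlier} induces a feasible point of~\ref{opt:kmean_sdp_balanced_outlier} attaining the same objective value, which combined with the first chain yields $\min \rsdpob \leq \min$~\ref{opt:kmean_outlier} $\leq \min$~\ref{opt:kmean_milp_outlier}.

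The main conceptual point, which I would emphasize explicitly, is why the proof can dispense with the symmetry breaking constraint $x_1^1=1$ used in Corollary~\ref{cor:kmean_sdp_balanced}. In that earlier result, averaging over all $K$ symmetric clusters would have forced the trivial assignment $\bm{x} = \frac{2-K}{K}\bm{1}$, thereby losing information. Here, because the outlier cluster $k=0$ has (in general) a different cardinality $n_0$ and is absent from the objective, it already breaks the symmetry of~\ref{opt:kmean_outlier} externally, so collapsing the remaining $K$ normal clusters yields a genuinely informative pair $(\bm{x},\mathbf{M})$ satisfying $K\bm{x} = (1-K)\bm{1} - \bm{x}^0$. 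Beyond this observation, all verifications are routine applications of convexity and the definitions of $\mathcal{C}^{}_{\text{SDP}}(n)$ and $\mathcal{C}^{}_{\text{SDP}}(n_0)$, so I expect no serious technical obstacle.
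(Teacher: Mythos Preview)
Your proposal is correct and follows essentially the same approach as the paper, which simply states that the result ``follows from a marginal modification of the argument that led to Corollary~\ref{cor:kmean_sdp_balanced}.'' You have correctly filled in these details: starting from $\min \rsdpo \leq \min$~\ref{opt:kmean_milp_outlier} (Theorem~\ref{thm:kmean_sdp_outlier}), averaging the $K$ normal clusters using convexity of $\mathcal{C}^{}_{\text{SDP}}(n)$, and verifying that the resulting point is feasible in~\ref{opt:kmean_sdp_balanced_outlier} with unchanged objective. Your explicit observation that the distinguished role of the outlier cluster obviates the symmetry breaking constraint $x_1^1=1$ is a useful clarification that the paper leaves implicit.
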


\begin{proof}{Proof}
	This follows from a marginal modification of the argument that led to Corollary~\ref{cor:kmean_sdp_balanced}. 
	\qed
\end{proof}

If the normal clusters are required to be balanced, then Algorithm~\ref{alg:det_rounding_outlier} should be modified as follows. First, in Step~2 the relaxations~\ref{opt:kmean_sdp_balanced_outlier} or~$\rlpob$ can be solved instead of~\ref{opt:kmean_outlier} or~$\rlpo$, respectively. Moreover, in Step~5 Algorithm~\ref{alg:det_rounding_balanced} must be called as a subroutine instead of Algorithm~\ref{alg:det_rounding_multi}.

In the presence of outliers, the perfect recovery result from Theorem~\ref{thm:recovery_balanced} remains valid if the following perfect separation condition is met, which can be viewed as a generalization of Assumption~{\bf (S)}.

\paragraph{\textbf{\em{(S')}} Perfect Separation:} \label{assumption:recovery_outlier}
There exists a partition $( J_0, J_1, \hdots, J_K )$ of $\{1, \hdots, N\}$ where each normal cluster $k=1,\hdots,K$ has the same cardinality $\vert J_k \vert = (N-n_0)/K\in\mathbb N$, while
\begin{equation*}
\max_{1 \leq k \leq K} \max_{i,j \in J_k} d_{ij} < \min_{1 \leq k_1 < k_2 \leq K} \min_{i \in J_{k_1}, j \in J_{k_2}} d_{ij} 
\quad \text{and} \quad  
\max_{1 \leq k \leq K} \max_{i,j \in J_k} d_{ij} < {\color{black} \min_{i \in J_0, \, j \in \{1,\ldots,N\} \setminus \{i\} }} d_{ij}.
\end{equation*}
Assumption~{\bf (S')} implies that the dataset admits the natural outlier cluster $J_0$ and the natural normal clusters $( J_1, \hdots, J_K )$. It also postulates that the diameter of each normal cluster is strictly smaller than (i) the distance between any two distinct normal clusters and (ii) the distance between any outlier and any other datapoint. Under this condition, Algorithm~\ref{alg:det_rounding_outlier} correctly identifies the optimal clustering.

\begin{theorem}
\label{thm:recovery_outlier}
If Assumption {\bf (S')} holds, then the optimal values of~{\em $\rlpob$} and~{\em $\milpo$} coincide. Moreover, the clustering $( J_0, \hdots, J_K )$ is optimal in~{\em $\milpo$} and is recovered by Algorithm~\ref{alg:det_rounding_outlier}.
\end{theorem}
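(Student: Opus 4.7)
{Proof Proposal.}
My plan is to mirror the two-step structure of the proof of Theorem~\ref{thm:recovery_balanced}, taking care of the new outlier cluster via the coupling constraint $K\bm{x} + \bm{x}^0 = (1-K)\bm{1}$. Throughout I may assume without loss of generality that $(J_0, J_1, \hdots, J_K)$ is the partition furnished by Assumption~\textbf{(S')}. In step~1, I will show that the optimal value of $\rlpob$ coincides with $\frac{1}{2n} \sum_{k=1}^{K} \sum_{i,j \in J_k} d_{ij}$, which, by Lemma~\ref{lem:distance}, is the objective value achieved by the natural clustering $(J_0, J_1, \hdots, J_K)$ in $\milpo$. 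Combining this with Corollary~\ref{cor:kmean_sdp_balanced_outlier} will establish that both the LP relaxation is tight and that the natural clustering is indeed optimal in $\milpo$.

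The key estimate for step~1 is to introduce $\mathbf{W} = \mathbf{M} + \bm{11}^\top + \bm{x}\bm{1}^\top + \bm{1}\bm{x}^\top$ so that the objective of $\rlpob$ becomes $\frac{K}{8n} \sum_{i \neq j} d_{ij} w_{ij}$. Arguing as in the proof of Theorem~\ref{thm:recovery_balanced}, I will check that $w_{ij} \geq 0$ and $\sum_{i \neq j} w_{ij} = 4n(n-1)$. The new twist is the entrywise upper bound on $w_{ij}$: using $m_{ij} \leq 1$ (which follows from the last two inequalities in the definition of $\mathcal{C}_{\text{LP}}(n)$) together with the coupling constraint $K x_i + x_i^0 = 1-K$ and the bound $x_i^0 \geq -1$ (obtained from the diagonal of $\mathbf{M}^0 + \bm{11}^\top + \bm{x}^0\bm{1}^\top + \bm{1}(\bm{x}^0)^\top \geq \bm 0$ combined with $\text{diag}(\mathbf{M}^0) = \bm 1$), I get $K w_{ij} \leq 4$. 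Since the total weight is $4Kn(n-1)$, the weighted average $\frac{K}{8n} \sum_{i\neq j} d_{ij} w_{ij}$ is minimized by concentrating the weight on the $Kn(n-1)$ smallest off-diagonal entries of $\mathbf{D}$. Under Assumption~\textbf{(S')} these smallest entries are precisely the intra-cluster pairs within $J_1, \hdots, J_K$, since every pair involving an index in $J_0$ has distance strictly larger than any intra-cluster pair.

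For step~2, I will exploit the fact that equality in the above bound forces $K w_{ij} = 4$ for every $i \neq j$ belonging to the same cluster $J_k$ with $k \geq 1$. Tracing this back through the chain of inequalities forces $m_{ij} = 1$ and $x_i = x_j = (2-K)/K$ for such pairs, which, via the coupling $K x_i + x_i^0 = 1-K$, yields $x_i^0 = -1$ for every $i \notin J_0$. The constraint $\bm{1}^\top \bm{x}^0 = 2n_0 - N$ in $\mathcal{C}_{\text{LP}}(n_0)$ then forces $\sum_{i \in J_0} x_i^0 = n_0$, which combined with the universal upper bound $x_i^0 \leq 1$ implies $x_i^0 = 1$ for every $i \in J_0$. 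Thus the sorting step of Algorithm~\ref{alg:det_rounding_outlier} perfectly separates $J_0$ from the remaining indices, yielding $I_0 = J_0$.

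Finally, once the outliers have been identified and removed, the residual dataset indexed by $\mathcal{I}_1 = \{1,\hdots,N\} \setminus J_0$ inherits Assumption~\textbf{(S)}: it splits into $K$ clusters $J_1, \hdots, J_K$ of equal cardinality $n$ and the intra-cluster diameters remain strictly smaller than any inter-cluster distance. Invoking Theorem~\ref{thm:recovery_balanced} for the call to Algorithm~\ref{alg:det_rounding_balanced} in Step~5 of Algorithm~\ref{alg:det_rounding_outlier} then guarantees recovery of $J_1, \hdots, J_K$, completing the argument. I anticipate that the only delicate part is the derivation of the sharp weight bound $K w_{ij} \leq 4$, because it relies on the precise form of the coupling between $\bm x$ and $\bm x^0$ and on simultaneously extracting one-sided bounds on the individual entries of both vectors from the $\mathcal{C}_{\text{LP}}$ constraints; the remaining combinatorial bookkeeping then proceeds in close analogy to Theorem~\ref{thm:recovery_balanced}. \qed
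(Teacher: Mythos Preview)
Your proposal is correct and follows essentially the same two-step route as the paper: the paper likewise introduces the matrix $\mathbf{H}=\mathbf{M}+\bm{11}^\top+\bm{x1}^\top+\bm{1x}^\top$, derives the bound $h_{ij}\le 4/K$ from $\mathbf{M}\le\bm{11}^\top$ together with $\bm x\le(\tfrac{2}{K}-1)\bm1$ (obtained from the coupling and $\bm x^0\ge-\bm1$), and then uses tightness to pin down $\bm x$ and $\bm x^0$ before handing the residual dataset to Theorem~\ref{thm:recovery_balanced}. The only cosmetic difference is that in Step~2 the paper closes the argument via $\bm{1}^\top\bm x=2n-N$ to force $x_i=-1$ on $J_0$, whereas you go directly through $\bm{1}^\top\bm x^0=2n_0-N$; the two are equivalent.
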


\begin{proof}{Proof}
The proof parallels that of Theorem~\ref{thm:recovery_balanced} and can be divided into two steps. In the first step we show that the LP relaxation~$\rlpob$ for balanced clustering and outlier detection is tight, and in the second step we demonstrate that Algorithm~\ref{alg:det_rounding_outlier} correctly identifies the clusters $(J_0,\ldots,J_K)$. As for the second step, it suffices to prove that the algorithm correctly identifies the outlier cluster $J_0$. Indeed, once the outliers are removed, the residual dataset satisfies Assumption~{\bf(S)}, and Theorem~\ref{thm:recovery_balanced} guarantees that the normal clusters $(J_1,\ldots,J_K)$ are correctly identified with Algorithm~\ref{alg:det_rounding_balanced}.

As a preliminary, note that $(\bm{x},\mathbf{M}) \in \mathcal{C}^{}_{\text{LP}}(n)$ implies
\begin{equation*}
\begin{aligned}
	\text{diag} (\mathbf{M} + \bm{1}\bm{1}^\top + \bm{x}\bm{1}^\top + \bm{1}\bm{x}^\top) \geq\bm 0  &&\hspace{-2mm}\Longrightarrow \; \bm x \geq -\bm 1,\\
	\text{diag} (\mathbf{M} + \bm{1}\bm{1}^\top - \bm{x}\bm{1}^\top - \bm{1}\bm{x}^\top) \geq\bm 0  &&\hspace{-2mm}\Longrightarrow \; \bm x \leq +\bm 1,
\end{aligned}
\end{equation*}
where the implications use $\text{diag} (\mathbf{M})=\bm 1$. Similarly, $(\bm{x}^0,\mathbf{M}^0) \in \mathcal{C}^{}_{\text{LP}}(n_0)$ implies $-\bm{1} \leq \bm{x}^0 \leq +\bm{1}$.

\paragraph{Step 1:}
For any feasible solution $(\bm{x}^0, \bm{x}, \mathbf{M}^0, \mathbf{M})$ of~$\rlpob$, introduce the auxiliary matrix $\mathbf{H} = \mathbf{M} + \bm{11}^\top + \bm{1x}^\top + \bm{x1}^\top$. Recall from the proof of Theorem~\ref{thm:recovery_balanced} that $\mathbf{H} \geq \bm{0} $ and 
\begin{equation*}
\begin{aligned}
	\sum_{i \neq j} h_{ij} = 4n(n-1). 
\end{aligned}
\end{equation*}
The constraint $K\bm{x} + \bm{x}^0 = (1-K)\bm{1}$ from $\rlpob$ ensures via the inequality $-\bm 1\leq \bm x^0$ that $\bm x\leq (\tfrac{2}{K}-1)\bm 1$. Recalling from the proof of Theorem~\ref{thm:recovery_balanced} that $\mathbf{M} \leq \bm{11}^\top$, we then find
\begin{equation}
\label{eq:bound_on_H_and_x}
	h_{ij} = m_{ij} + 1 + x_i + x_j \leq 1 + 1 + \left( \frac{2}{K}-1 \right) + \left( \frac{2}{K}-1 \right) = \frac{4}{K}\quad \forall i,j = 1,\hdots, N.
\end{equation}
Similar arguments as in the proof of Theorem~\ref{thm:recovery_balanced} reveal that the objective function of the joint outlier detection and (balanced) clustering problem~$\rlpob$ can be expressed as
\begin{equation}
\label{eq:kmean_sdp_balanced_outlier_bound}
\begin{aligned}
	\frac{K}{8n}\left< \mathbf{D}, \mathbf{H} \right> 
	&\geq 
	\frac{1}{2n} \left\{ \text{sum of the $Kn(n-1)$ smallest entries of $d_{ij}$ with $i \neq j$} \right\} \\
	&=\frac{1}{2n} \sum_{k=1}^K \sum_{i,j \in J_k} d_{ij},
\end{aligned}
\end{equation}
where the equality follows from Assumption {\bf (S')}. By Lemma~\ref{lem:distance}, the right-hand side of~\eqref{eq:kmean_sdp_balanced_outlier_bound} represents the objective value of the clustering $( J_0, \hdots, J_K )$ in the MILP~\ref{opt:kmean_milp_outlier}. Thus, $\rlpob$ provides~an upper bound on~\ref{opt:kmean_milp_outlier}. By Corollary~\ref{cor:kmean_sdp_balanced_outlier}, $\rlpob$ also provides a lower bound on~\ref{opt:kmean_milp_outlier}. We may thus conclude that the LP relaxation~$\rlpob$ is tight and, {\color{black}as a consequence}, that the clustering $( J_0, \hdots, J_K )$ is indeed optimal in \ref{opt:kmean_milp_outlier}. 

\paragraph{Step 2:}
As the inequality in~\eqref{eq:kmean_sdp_balanced_outlier_bound} is tight, any optimal solution to~$\rlpob$ satisfies $h_{ij} = \frac{4}{K}$ whenever $i\neq j$ and $i,j \in J_k$ for some $k =1, \hdots, K$ ({\em i.e.}, whenever $\bm\xi_i$ and $\bm\xi_j$ are {\em not} outliers and belong to the same cluster). This in turn implies via~\eqref{eq:bound_on_H_and_x} that $x_i = \frac{2}{K} - 1$ for all $i\in\cup_{k=1}^K J_k$. Furthermore, the constraint $\bm{1}^\top\bm{x} = 2n-N$ from $\mathcal{C}^{}_{\text{LP}}(n)$ implies
\begin{equation*}
	2n - N = \sum_{k=1}^K \sum_{i \in J_k} x_i + \sum_{i \in J_0} x_i 
	\geq Kn \left( \frac{2}{K} - 1 \right) + \sum_{i \in J_0} (-1) 
	= 2n - N,
\end{equation*}
where the inequality holds because $-\bm 1\leq \bm x$. Thus, the above inequality must in fact hold as an equality, which implies that $x_i=-1$ for all $i\in J_0$. The constraint $K\bm{x} + \bm{x}^0 = (1-K)\bm{1}$ from $\rlpob$ further implies that $x^0_i=-1$ for all $i\in\cup_{k=1}^K J_k$ and $x^0_i=+1$ for all $i\in J_0$. 

Since Algorithm~\ref{alg:det_rounding_outlier} constructs $I_0$ as the index set of the $n_0=N-Kn$ largest entries of the vector~$\bm x^0$, we conclude that it must output $I_0 = J_0$, and the proof completes. \qed
\end{proof}
{\color{black}
\begin{remark}[Unknown Cluster Cardinalities]
The joint outlier detection and cardinality-constrained clustering problem~{\em $\milpo$} can also be used when the number of outliers is not precisely known and only an estimate of the relative size (as opposed to the exact cardinality) of the clusters is available. To this end, we solve~{\em $\milpo$} for different values of $n_0$, respectively assigning the remaining $N-n_0$ datapoints to clusters whose relative sizes respect the available estimates. The value $n_0^\star$ representing the most reasonable number of outliers to be removed from the dataset can then be determined using the elbow method{\color{black}; see, \emph{e.g.}, \citet[Chapter~10]{Gareth2017}}.
\end{remark}
}

{\color{black}
As an illustration, consider again the dataset depicted in Figure~\ref{fig:outliers} which showcases the crux of outlier detection in the context of cardinality-constrained clustering.
In Section 1, we inadvertently assumed to have the knowledge that the dataset under consideration was contaminated by three outliers. To demonstrate the practical usefulness of our approach, we will now employ the elbow method to determine the number of outliers $n_0$ without making any assumptions about the dataset. As elucidated in Remark 4, the ideal value of $n_0$ can be determined by solving problem~{\em $\milpo$} repeatedly. However, as~{\em $\milpo$} constitutes an intractable optimization problem, we solve its convex relaxations~$\rlpob$ and~$\rsdpob$ instead and plot the resulting objective values in logarithmic scale in Figure~\ref{fig:elbow}. It becomes apparent that $n_0^\star = 3$ is most appropriate as it marks the transition from the initially steep decline pattern of the objective value to a substantially flatter decline pattern. Note that $n_0$ needs to be a multiple of $K = 3$ to allow for balanced clustering.

\begin{figure}[h]
 	\centering
	\includegraphics[width=0.50\paperwidth]{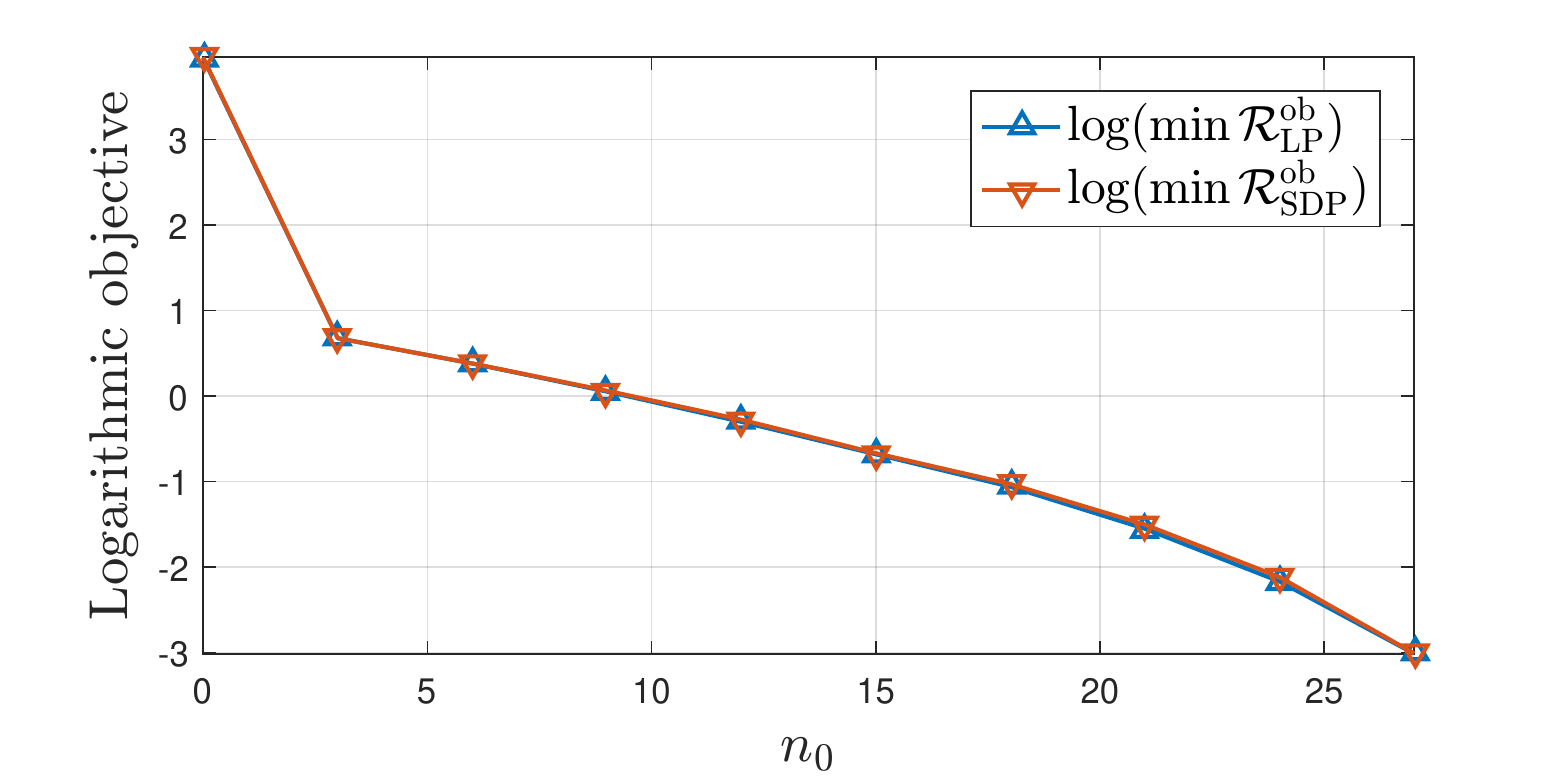} 
	\caption{Elbow plot for the dataset depicted in Figure~\ref{fig:outliers}.}
	\label{fig:elbow}
\end{figure}
}

\section{Numerical Experiments}
\label{sec:numericalexp}

We now investigate the performance of our algorithms on synthetic as well as real-world clustering problems with and without outliers. All LPs and SDPs are solved with CPLEX 12.7.1 and MOSEK 8.0, respectively, using the YALMIP interface on a 3.40GHz i7 computer with 16GB~RAM.

\subsection{Cardinality-Constrained $K$-Means Clustering (Real-World Data)}
We compare the performance of our algorithms from Section~\ref{sec:no-outliers} with the algorithm of \cite{Bennett2000}, see Appendix, and with the two SDP relaxations proposed by \cite{Peng2007} on the classification datasets of the UCI Machine Learning Repository (\url{http://archive.ics.uci.edu/ml/}) with 150--300 datapoints, up to 200 continuous attributes and no missing values. {\color{black} Table~\ref{tab:characteristics} reports the main characteristics of these datasets.} In our experiments, we set the cluster cardinalities to the numbers of true class occurrences in each dataset. {\color{black} It should be emphasized that, in contrast to the other methods, and with exception of the two balanced datasets, the SDP relaxations of \cite{Peng2007} do not have access to the cluster cardinalities. They should thus be seen as a baseline for the performance of the other methods.} {\color{black} Furthermore, we remark that all datasets severely violate Assumption~{\bf (S)}. Indeed, the ratios of largest cluster diameter to smallest distance between clusters (when the clusters are determined by the true labels) vary from 7 to 149, while they should be smaller than one in order to satisfy Assumption~{\bf (S)}. Also, only two datasets actually entail balanced clusters.}

Table~\ref{tab:performance} reports the lower bounds provided by $\rlp / \rlpb$ and $\rsdp / \mathcal{R}_{\text{SDP}}^{\text{b}}$ (LB), the upper bounds from Algorithms~1 and~2 (UB), the objective value of the best of 10 runs of the algorithm of Bennett et al.~(UB), randomly initialized by the cluster centers produced by the \emph{K-means++ algorithm} of \cite{Arthur2007}, the coefficient of variation across these 10 runs (CV), the respective lower bounds (LB) obtained from the SDP relaxations {\color{black} \ref{opt:sdp_pw1}/\ref{opt:sdp_pw3} and~\ref{opt:sdp_pw2} of \cite{Peng2007}{\color{black}, and the solution times for each of these methods. The latter was limited to a maximum of three hours, and in one case (namely, ``Glass Identification''), $\rsdp$ did not terminate within this limit. The ``--" signs in Table~\ref{tab:performance} indicate this occurrence.} 

The obtained lower bounds of $\rsdp$/$\mathcal{R}_{\rm{SDP}}^{\rm{b}}$ allow us to certify that the algorithm of \cite{Bennett2000} provides nearly optimal solutions in almost all instances. Also, both Algorithms~1 and~2 are competitive {\color{black}in terms of solution quality} with the algorithm of \cite{Bennett2000} while providing rigorous error bounds. Moreover, as expected in view of Propositions~\ref{prop:rsdp_vs_pw1}~and~\ref{prop:rsdp_vs_pw3}, for all datasets $\rsdp$/$\mathcal{R}_{\rm{SDP}}^{\rm{b}}$ yield better lower bounds than the SDP relaxations \ref{opt:sdp_pw1}/\ref{opt:sdp_pw3} and \ref{opt:sdp_pw2} of \cite{Peng2007}. The lower bounds obtained from $\rlp$/$\mathcal{R}_{\rm{LP}}^{\rm{b}}$ are competitive with those provided by the relaxations \ref{opt:sdp_pw1}/\ref{opt:sdp_pw3}, and they are always better than the lower bounds provided by their relaxation \ref{opt:sdp_pw2}. It should be mentioned, however, that one can construct instances where the situation is reversed, \emph{i.e.}, both \ref{opt:sdp_pw1}/\ref{opt:sdp_pw3} and \ref{opt:sdp_pw2} are tighter than $\rlp$/$\mathcal{R}_{\rm{LP}}^{\rm{b}}$.} \cite{Peng2007} also suggest a procedure to compute a feasible clustering (and thus upper bounds) for the unconstrained $K$-means clustering problem. However, this procedure relies on an enumeration of all possible Voronoi partitions, which is impractical for $K\geq3$; see \cite{Inaba1994}. Furthermore, it is not clear how to impose cardinality constraints in this setting.

\begin{table}[H]
\scriptsize
\centering
{\color{black}
\begin{tabular}{c|l|c|c|c|l|c}
	& & $N$ & $d$ & $K$ & \multicolumn{1}{c|}{$n_k$} & \\
    ID & Dataset Name & (\# datapoints) & (\# dimensions) & (\# clusters) & \multicolumn{1}{c|}{(cardinalities)} & balanced \\ \hline 
	1 & Iris & 150 & 4 & 3 & 50, 50, 50 & yes \\
	2 & Seeds & 210 & 7 & 3 & 70, 70, 70 & yes \\
	3 & Planning Relax & 182 & 12 & 2 & 130, 52 & no \\ 
	4 & Connectionist Bench & 208 & 60 & 2 & 111, 97 & no \\
	5 & Urban Land Cover & 168 & 147 & 9 & 23, 29, 14, 15, 17, 25, 16, 14, 15 & no \\
	6 & Parkinsons & 195 & 22 & 2 & 48, 147 & no \\ 
	7 & Glass Identification & 214 & 9 & 6 & 70, 76, 17, 13, 9, 29 & no \\  \hline
\end{tabular}
~\\~\\
\caption{Overview of the main dataset characteristics.}
\label{tab:characteristics}
}
\end{table}

\begin{table}[H]
\scriptsize
\centering
{\color{black}
\begin{tabular}{c|ccc|ccc|ccc|cc|cc}
    &\multicolumn{3}{c|}{$\rlp / \rlpb$} & \multicolumn{3}{c|}{$\rsdp / \mathcal{R}_{\text{SDP}}^{\text{b}}$} & \multicolumn{3}{c|}{Bennett et al.} & \multicolumn{2}{c|}{$\mathcal{PW}_{1} /\mathcal{PW}_{1}^{\rm{b}}$} & \multicolumn{2}{c}{$\mathcal{PW}_{2}$} \\
    ID & UB & LB & time [s] & UB & LB & time [s] & UB & CV [\%] & time [s] & LB & time [s] & LB & time [s] \\ \hline 
	1 & 81.4 & 78.8 & 17 & 81.4 & 81.4 & 584 & 81.4 & 0.0 & 6 & 81.4 & 154 & 15.2 & 0.02 \\
	2 & 620.7 & 539.0 & 46 & 605.6 & 605.6 & 3,823 & 605.6 & 0.0 & 7 & 604.5 & 1,320 & 19.0 & 0.03 \\
	3 & 325.9 & 297.0 & 24 & 315.7 & 315.7 & 2,637 & 315.8 & 0.3 & 9 & 299.0 & 510 & 273.7 & 0.02  \\ 
	4 & 312.6 & 259.1 & 49 & 280.6 & 280.1 & 3,638 & 280.6 & 0.4 & 6 & 270.0 & 1,376 & 246.2 & 0.04 \\
	5 & 3.61$\mathrm{e}$9 & 3.17$\mathrm{e}$9 & 2,241 & 3.54$\mathrm{e}$9 & 3.44$\mathrm{e}$9 & 10,754 & 3.64$\mathrm{e}$9 & 9.2 & 13 & 2.05$\mathrm{e}$9 & 460 & 1.94$\mathrm{e}$8 & 0.02 \\
	6 & 1.36$\mathrm{e}$6 & 1.36$\mathrm{e}$6 & 22 & 1.36$\mathrm{e}$6 & 1.36$\mathrm{e}$6 & 2,000 & 1.36$\mathrm{e}$6 & 15.1 & 7 & 1.11$\mathrm{e}$6 & 777 & 6.31$\mathrm{e}$5 & 0.02 \\ 
	7 & 469.0 & 377.2 & 232 & -- & -- & -- & 438.2 & 28.4 & 13 & 321.9 & 1,500 & 23.8 & 0.03 \\  \hline
\end{tabular}
~\\~\\
\caption{Performance of $\rlp$, $\rsdp$, Bennett et al., and Peng and Wei. {\color{black} The ``--'' signs indicate that \\ the problem instance could not be solved within a time limit of three hours.}}
\label{tab:performance}
}
\end{table}

{\color{black} Specifically, in the context of the two balanced datasets (\emph{i.e.}, ``Iris'' and ``Seeds''), we can enrich the preceding {\color{black}comparison} with the heuristics proposed by \cite{Costa2017} and \cite{Malinen2014}. As for the variable neighbourhood search method of \cite{Costa2017}, we were provided with the executables of the \emph{C++} implementation used in that paper. For the ``Iris'' dataset, the best objective value out of 10 independent runs of this method was 81.4 (which is provably optimal thanks to the lower bounds provided by $\rsdp$ and $\mathcal{PW}_{1}^{\rm{b}}$) and the time to execute all runs was 0.12 seconds. For the ``Seeds'' dataset, the best objective value out of 10 independent runs was 605.6 (again, provably optimal in view of the lower bound provided by $\rsdp$) and the overall runtime was 0.53 seconds. The algorithm of \cite{Malinen2014} follows the same steps as the one of \cite{Bennett2000} with the improvement that the cluster assignment step is solved by the Hungarian algorithm, which provides better runtime guarantees and typically solves faster than interior-point methods for LPs. For this reason, the upper bounds of \cite{Malinen2014} for the ``Iris'' and ``Seeds'' dataset coincide with those of \cite{Bennett2000} while their algorithm can be expected to terminate faster. A direct comparison of the time complexity of these two methods can be found in \cite{Malinen2014}.
}

\subsection{Cardinality-Constrained $K$-Means Clustering (Synthetic Data)}
{\color{black}We now randomly generate partitions of $10$, $20$ and $70$ datapoints in $\mathbb{R}^2$ that are drawn from uniform distributions over $K = 3$ unit balls centered at $\bm\zeta_1, \bm\zeta_2$ and $\bm\zeta_3$, respectively, such that $\Vert \bm\zeta_1 - \bm\zeta_2 \Vert = \Vert \bm\zeta_1 - \bm\zeta_3 \Vert = \Vert \bm\zeta_2 - \bm\zeta_3 \Vert = \delta$}. Theorem~\ref{thm:recovery_balanced} shows that $\rlpb$ is tight and that Algorithm~\ref{alg:det_rounding_balanced} can recover the true clusters whenever $n_1 = n_2 = n_3$ and $\delta \geq 4$. Figure~\ref{fig:galaxy} demonstrates that in practice, perfect recovery is often achieved~by Algorithm~\ref{alg:det_rounding_multi} even if $\delta \ll 4$ and $n_1 \neq n_2 \neq n_3$. We also note that $\rsdp$ outperforms $\rlp$ when $\delta$ is small, and that the algorithm of Bennett et al.~frequently fails to determine the optimal solution even if it is run 10 times. {\color{black} In line with the results from the real-world datasets, $\rsdp$ and $\rlp$ are tighter than the stronger SDP relaxation of \cite{Peng2007}. Furthermore, it can be shown that in this setting the weaker relaxation of \cite{Peng2007} always yields the trivial lower bound of zero.} The average runtimes are 7s ($\rlp$), 106s ($\rsdp$), 11s (Bennett et~al.) {\color{black} and 15.6s (Peng and Wei).}

\begin{figure}[h]
 	\centering
	\includegraphics[width=0.36\paperwidth]{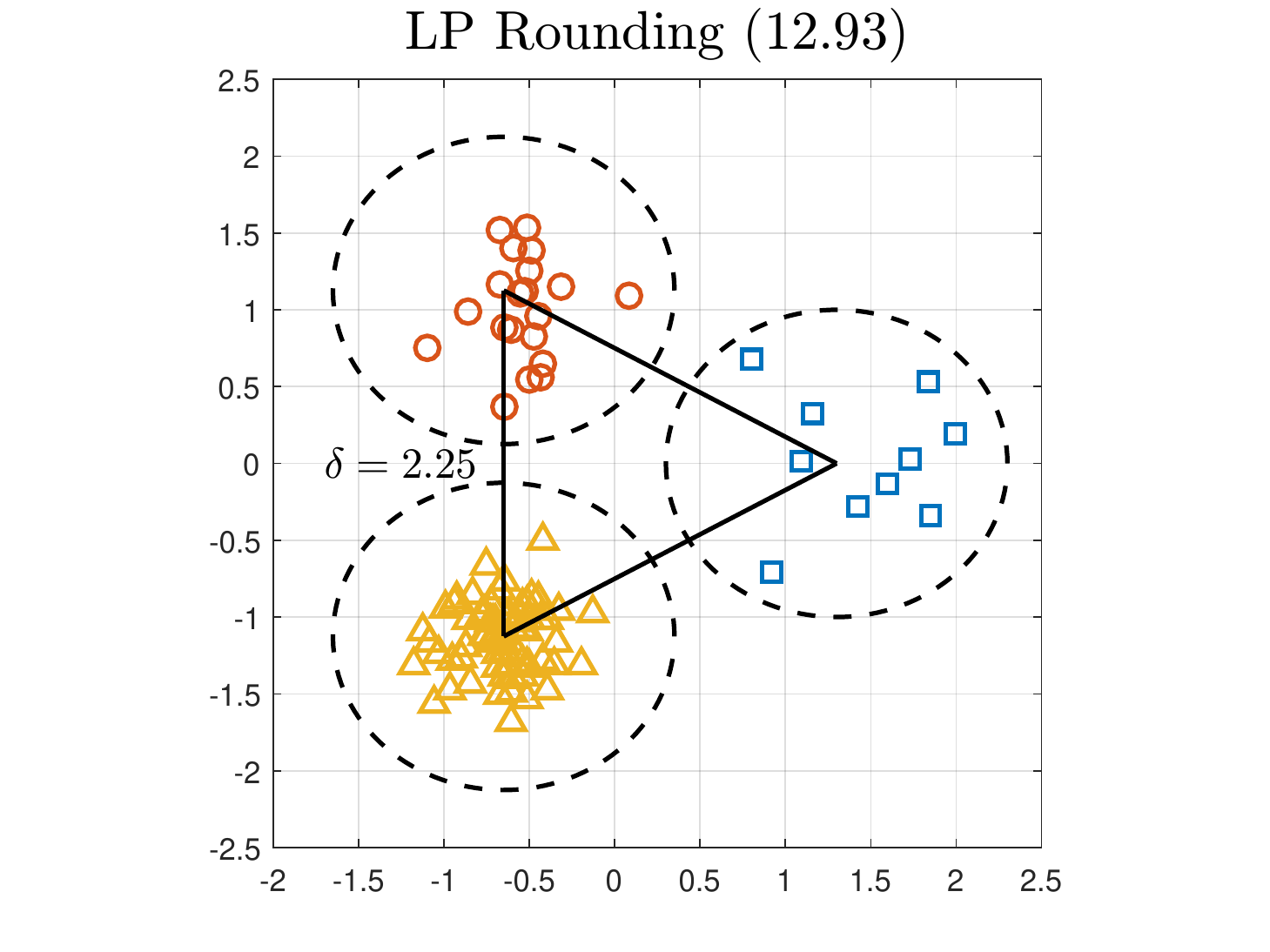} 
	\includegraphics[width=0.36\paperwidth]{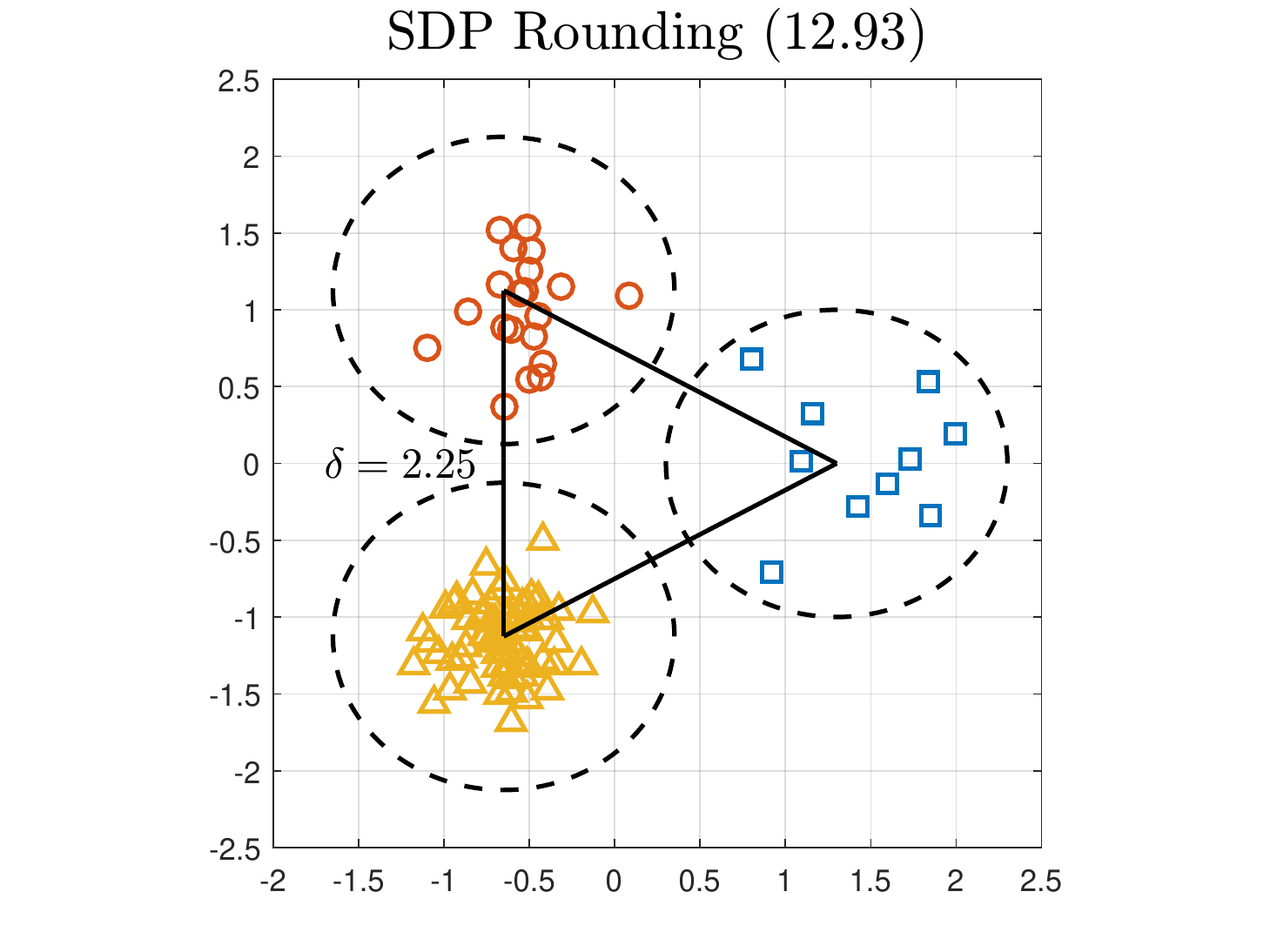} 
	\includegraphics[width=0.36\paperwidth]{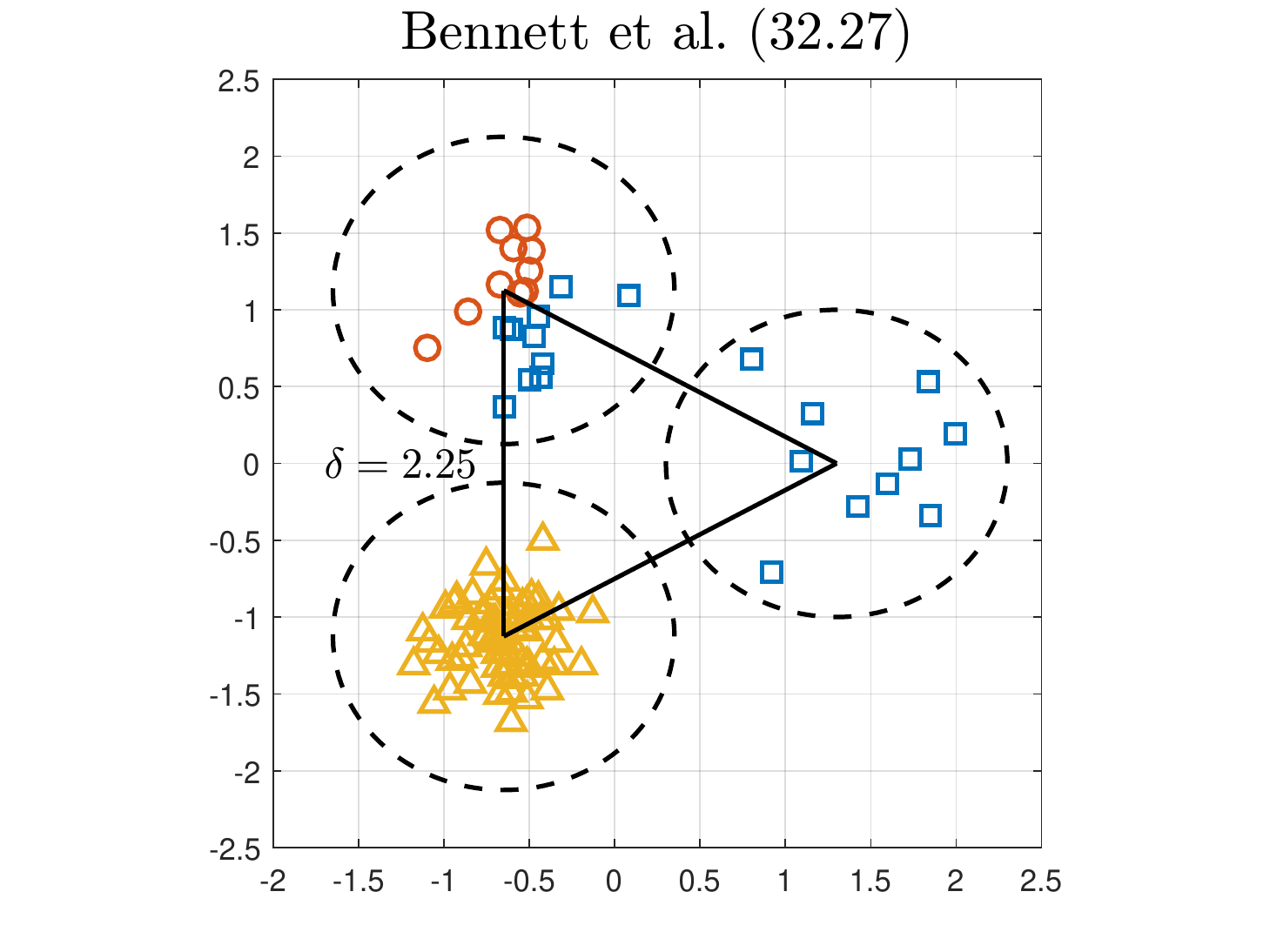} 
	\includegraphics[width=0.36\paperwidth]{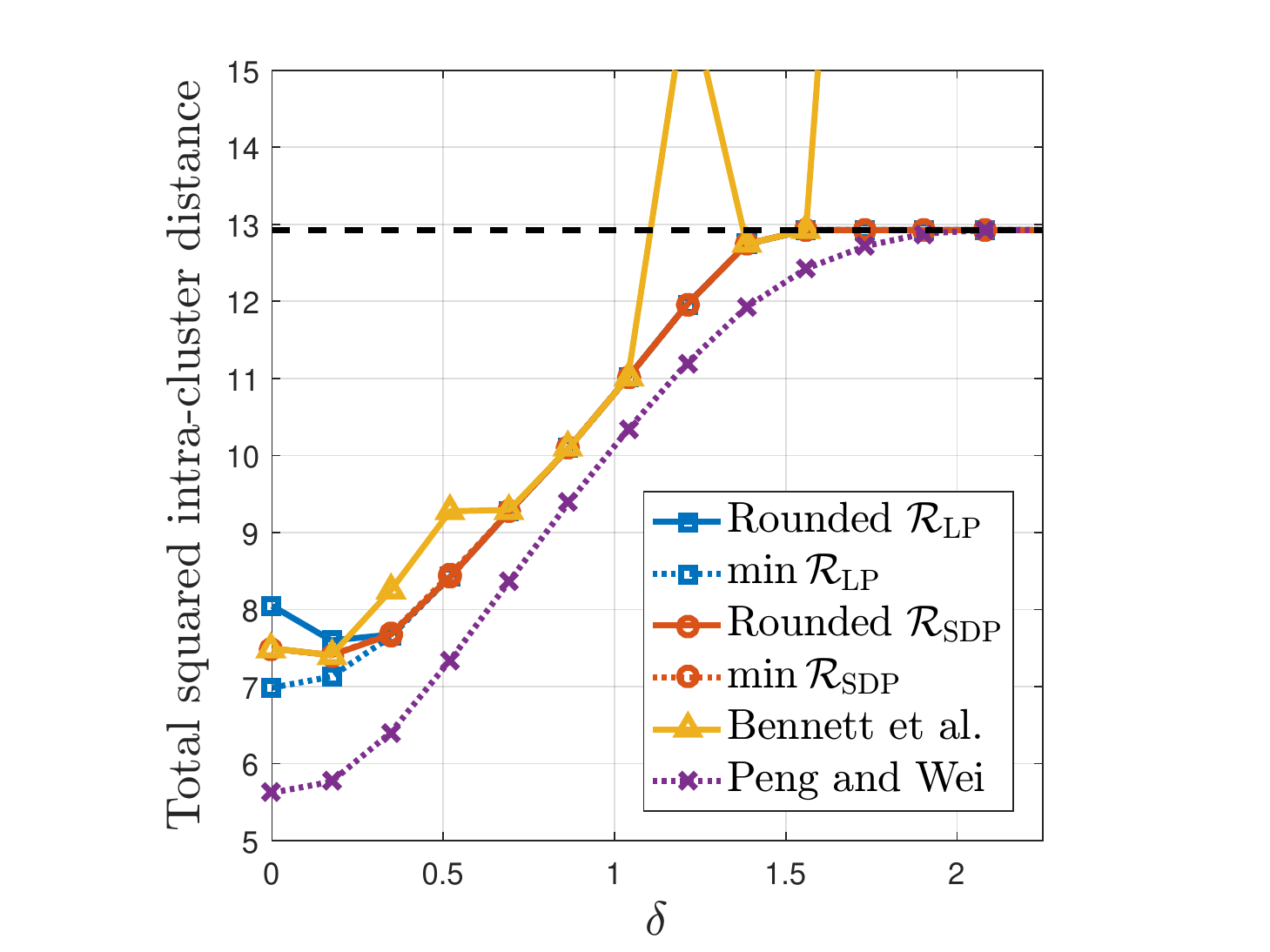} 
	\caption{Comparison between different algorithms for (cardinality-constrained) $K$-means clustering for 100 datapoints where the cardinalities are given by $(n_1, n_2, n_3) = (10, 20, 70)$. Indicated in parentheses next to the panel titles are the respectively achieved sums of squared intra-cluster distances.}
	\label{fig:galaxy}
\end{figure}

\subsection{Outlier Detection}
We use $\rlpo$ and Algorithm~\ref{alg:det_rounding_outlier} to classify the \emph{Breast Cancer Wisconsin (Diagnostic)} dataset. The dataset has $d = 30$ numerical features, which we standardize using a Z-score transformation, and it contains 357 benign and 212 malignant cases of breast cancer. We interpret the malignant cases as outliers and thus set $K = 1$. Figure~\ref{fig:breast_cancer} reports the prediction accuracy as well as the false positives (benign cancers classified as malignant) and false negatives (malignant cancers classified as benign) as we increase the number of outliers $n_0$ from $0$ to $400$. The figure shows that while setting $n_0 \approx 212$, the true number of malignant cancers, maximizes the prediction accuracy, any choice $n_0 \in [156,280]$ leads to a competitive prediction accuracy above 80\%. Thus, even rough estimates of the number of malignant cancer datapoints can lead to cancer predictors of decent quality. The average runtime is 286s, and the optimality gap is consistently below 3.23\% for all values of $n_0$.

\begin{figure}[h]
 \centering
 \includegraphics[width=0.38\paperwidth]{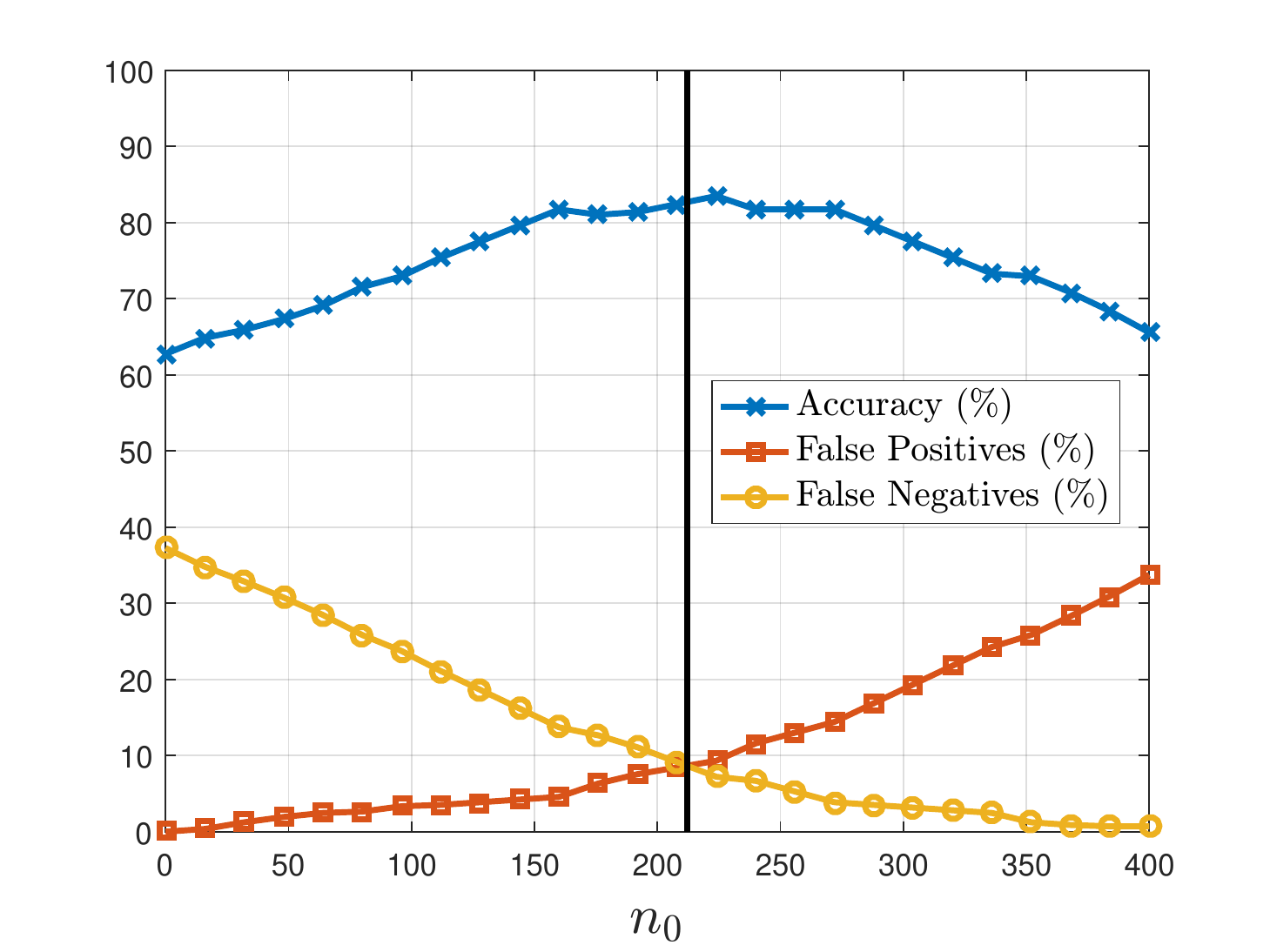}
 \caption{Outlier detection for breast cancer diagnosis.}
 \label{fig:breast_cancer}
\end{figure}

{\color{black}
\section{Conclusion}
\label{sec:conclusion}

Clustering is a hard combinatorial optimization problem. For decades, it has almost exclusively been addressed by heuristic approaches. Many of these heuristics have proven to be very successful in practice as they often provide solutions of high, or at least satisfactory, quality within attractive runtimes. The common drawback of these methods is that there is typically no way of certifying the optimality of the provided solutions nor to give guaranteed bounds on their suboptimality.

Maybe precisely because of this shortcoming, more recently, convex optimization approaches have been proposed for solving relaxed versions of the clustering problem. These conic programs are polynomial-time solvable and offer bounds on the suboptimality of a given solution. Furthermore, the solutions of these conic relaxations can be ``rounded'' to obtain actually feasible solutions to the original clustering problem, which results in a new class of heuristic methods.

The results presented in this paper follow precisely this recent paradigm. Combined, conic relaxations and (rounding) heuristics offer solutions to the clustering problem together with a-posteriori guarantees on their optimality. Naturally, one would also wish for attractive a-priori guarantees on the performance of these combined methods. The conditions required to derive such a-priori guarantees are still quite restrictive, but the strong performance of these methods on practical instances makes us confident that this is a promising avenue for future research.

}

\paragraph{Acknowledgements:} This research was supported by the Singapore Ministry of Education Academic Research Fund Tier 1 R-266-000-131-133, the SNSF grant BSCGI0\texttt{\char`_}157733 and the EPSRC grant EP/N020030/1. {\color{black}We are grateful to Leandro Costa and Daniel Aloise for sharing with us the executables of the variable neighbourhood search heuristic proposed in \cite{Costa2017}.}

\bibliography{references}
\bibliographystyle{ormsv080}

\section*{Appendix: Algorithm of \cite{Bennett2000}}

{\color{black}The algorithm of \cite{Bennett2000} is designed for a variant of problem~\eqref{opt:kmean_size}, where only lower bounds on the clusters' cardinalities are imposed. This algorithm has a natural extension to our cardinality-constrained clustering problem~\eqref{opt:kmean_size} as follows.}
\begin{algorithm}[H]
\caption{Algorithm of Bennett et al.~for cardinality-constrained clustering}
\label{alg:lloyd_lap}
\begin{algorithmic}[1]
\State \textbf{Input:} $\mathcal{I}_1 = \{1,\hdots,N\}$ (data indices), $n_k \in \mathbb{N}, k=1,\hdots,K$ (cluster sizes).
\State Generate the cluster centers $\bm\zeta_1, \hdots, \bm\zeta_K \in \mathbb{R}^d$.
\State Solve the linear assignment problem
\begin{equation*}
	\bm\Pi^\star \in
	\underset{\mathbf{\Pi}}{\text{argmin}} \left\{ 
		\sum_{i = 1}^N \sum_{k = 1}^K \pi_i^k \Vert \bm\xi_i - \bm\zeta_k \Vert^2 :~
		\pi_i^k \in \{ 0, 1\},~
		\sum_{i = 1}^N \pi_i^k = n_k \; \forall k,~
		\sum_{k = 1}^K \pi_i^k = 1 \; \forall i
	\right\}.
\end{equation*}
\State Set $I_k \leftarrow \{ i: (\pi^\star)_i^k = 1 \}$ for all $k = 1, \hdots, K$.
\State Set $\bm\zeta_k \leftarrow \frac{1}{n_k}\sum_{i \in I_k} \bm\xi_i$ for all $k = 1, \hdots, K$.
\State Repeat Steps 3--5 until there are no more changes in $\bm\zeta_1, \hdots, \bm\zeta_K$.
\State \textbf{Output:} $I_1, \hdots, I_K.$
\end{algorithmic}
\end{algorithm}
Algorithm~\ref{alg:lloyd_lap} adapts a classical local search heuristic for the unconstrained $K$-means clustering problem due to \cite{Lloyd1982} to problem~\eqref{opt:kmean_size}. At initialization, it generates random cluster centers $\bm\zeta_k$, $k = 1, \ldots, K$. Each subsequent iteration of the algorithm consists of two steps. The first step assigns every datapoint $\bm\xi_i$ to the nearest cluster center while adhering to the prescribed cluster cardinalities, whereas the second step replaces each center $\bm\zeta_k$ with the mean of the datapoints that have been assigned to cluster $k$. The algorithm terminates when the cluster centers $\bm\zeta_1, \hdots, \bm\zeta_K$ no longer change.

{\color{black}

}

\end{document}